\documentclass[english,11pt]{article}
\linespread{1.4}
\usepackage[english]{babel}
\usepackage{textcomp}
\usepackage{eurosym}
\usepackage{amsthm}

\usepackage[intlimits]{amsmath}
\usepackage[all]{xy}
\usepackage{paralist}
\usepackage{amssymb}
\usepackage{amsfonts}
\usepackage{mathbbol}
\usepackage{paralist}
\usepackage{graphicx}
\usepackage{fancyhdr}
\usepackage{epsfig}
\usepackage[linkcolor=black,citecolor=black]{hyperref}
\usepackage[latin1]{inputenc}
\usepackage[T1]{fontenc}
\usepackage{ifthen}
\setcounter{MaxMatrixCols}{30}
\usepackage{amsfonts}
\usepackage{amsmath}
\usepackage{amssymb}
\usepackage{graphicx}
\usepackage[]{color}
\usepackage{upref}
\usepackage[all]{xy}
\usepackage{paralist}
\usepackage{textcomp}

\theoremstyle{plain}

\newtheorem{thms}{Theorem}[section]

\theoremstyle{definition}

\newtheorem{rems}[thms]{Remark}

\numberwithin{equation}{section}

\theoremstyle{definition}

\numberwithin{equation}{section}


\newcommand{\DD}{\mathbb{D}}
\newcommand{\EE}{\mathbb{E}}

\newcommand{\NN}{\mathbb{N}}

\newcommand{\PP}{\mathbb{P}}

\newcommand{\RR}{\mathbb{R}}

\newcommand{\bB}{\mathcal{B}}

\newcommand{\fF}{\mathcal{F}}

\newcommand{\iI}{\mathcal{I}}

\newcommand{\vt}{\vartheta}
\newcommand{\al}{\alpha}

\newcommand{\e}{\varepsilon}
\newcommand{\la}{\lambda}

\newcommand{\si}{\sigma}

\newcommand{\om}{\omega}
\newcommand{\Om}{\Omega}

\newcommand{\ra}{\rightarrow}

\newcommand{\lra}{\longrightarrow}
\newcommand{\ti}{\widetilde}
\newcommand{\ho}{\hat}

\newcommand{\vzv}{\Leftrightarrow}

\newcommand{\ind}{\mathbf{1}}

\newcommand{\lqq}{\leqslant}
\newcommand{\gqq}{\geqslant}

\usepackage[]{color}
\definecolor{DarkGreen}{rgb}{0.1,0.7,0.3}   


\DeclareMathOperator{\Li}{\ensuremath{Li}}

\textwidth 16.5cm \textheight 23cm 
\oddsidemargin 0mm
\evensidemargin -4.5mm
\topmargin -10mm

\providecommand{\U}[1]{\protect\rule{.1in}{.1in}}
\newtheorem{theorem}{Theorem}

\newtheorem{corollary}[theorem]{Corollary}

\newtheorem{definition}[theorem]{Definition}

\newtheorem{lemma}[theorem]{Lemma}

\newtheorem{proposition}[theorem]{Proposition}
\newtheorem{remark}[theorem]{Remark}

\definecolor{DarkGreen}{rgb}{0,0.7,0.2}   
\definecolor{DarkBlue}{rgb}{0,0,0.7}
\definecolor{DarkRed}{rgb}{0.95,0,0}

\begin{document}
\title{A solution selection problem with small symmetric stable perturbations}

\author{Franco Flandoli, \footnote{Dipartimento di Matematica, Largo Bruno
Pontecorvo 5, 56127 Pisa, Italy; flandoli@dma.unipi.it}
\hspace{1.5cm}
Michael H\"ogele\footnote{Institut f\"ur Mathematik, Universit\"at Potsdam, Germany; hoegele@math.uni-potsdam.de}
}

\maketitle

\begin{abstract}
The zero-noise limit of differential equations with singular coefficients 
is investigated for the first time in the case when the noise is an $\alpha $-stable process. 
It is proved that extremal solutions are selected and the respective probability of selection is computed. 
For this purpose an exit time problem from the half-line, which is of interest in its own right, 
is formulated and studied by means of a suitable decomposition in small and large jumps adapted to the singular drift.
\end{abstract}

\noindent \textbf{Keywords: stochastic differential equations, singular drifts,
zero-noise limit, Peano phenomena, non-uniqueness, 
$\alpha$-stable process, persistence probabilities, exit problem, selection of solutions.} \\

\noindent \textbf{2010 Mathematical Subject Classification: 
60H10; 34A12; 60G52; 60G51; 60F99}.


\section{Introduction}

The zero-noise limit of a stochastic differential equation, with drift vector field $b$ and a Wiener process $W$, say of the form%
\begin{equation}
X_{t}^{\varepsilon}=x_{0}+\int_{0}^{t}b\left(  X_{s}^{\varepsilon}\right)
ds+\varepsilon W_{t} , \quad t\gqq 0, \e>0, \label{SDE1}%
\end{equation}
is a classical subject of probability, see for instance \cite{FreiWen}. When
the limit deterministic equation%
\begin{equation}
X_{t}=x_{0}+\int_{0}^{t}b\left(  X_{s}\right)  ds \label{ODE1}, \quad t\gqq 0, 
\end{equation}
is well posed, usually one has $X_{t}^{\varepsilon}\rightarrow X_{t}$ a.s. and
typical relevant questions are the speed of convergence and large deviations.
On the contrary, when the Cauchy problem (\ref{ODE1}) has more than one
solution, the first question concerns the \textit{selection}, namely which
solutions of (\ref{ODE1}) are selected in the limit and with which
probability. This selection problem is still poorly understood and we aim to
contribute with the investigation of the case when the noise is an $\alpha
$-stable process.

The case treated until now in the literature is the noise of Wiener type. All
known quantitative results are restricted to equations in dimension one. The
breakthrough on the subject was due to Bafico and Baldi \cite{BB} who solved the
selection problem for very general drift $b$ having one point $x_{0}$ of
singularity. The paradigmatic example of $b$ to test the theory is%
\begin{equation}
b\left(  x\right)  =\left\{
\begin{array}
[c]{ccc}%
B^{+}\left\vert x\right\vert ^{\beta^{+}} & \text{for} & x\geq0\\
-B^{-}\left\vert x\right\vert ^{\beta^{-}} & \text{for} & x<0.
\end{array}
\right.  \label{drift b}%
\end{equation}
where $B^{\pm}>0$, $\beta^{\pm}\in\left(  0,1\right)$; the deterministic
equation (\ref{ODE1}) with $x_{0}=0$ has infinitely many solutions, which are
equal to zero on $[0,\infty)$ or on some interval $[0,t_{0}]$ (possibly
$t_{0}=0$) and then, on $[t_{0},\infty)$, they are equal either to
$C^{+}\left(  t-t_{0}\right)  ^{\frac{1}{1-\beta^{+}}}$ or to $-C^{-}\left(
t-t_{0}\right)^{\frac{1}{1-\beta^{-}}}$, with $C^{\pm}$ given in (\ref{eq: explicit solution}) 
of Section \ref{sec: large jumps}; 
a central role will be played by the two \textit{extremal}
solution,
\[
x^{\pm}=\pm C^{\pm}t^{\frac{1}{1-\beta^{\pm}}}.
\]
The article \cite{BB} completely solves the selection problem for this and more
general examples, making use of explicit computations on the differential
equations satisfied by suitable exit time probabilities; such equations are
elliptic PDEs, in general, so they are explicitly solvable only in dimension
one (except for particular cases). The final result is that the law
$P_{\varepsilon}^{W}$, on $C\left(  \left[  0,T\right]  ;\mathbb{R}\right)  $,
of the unique solution $X_{t}^{\varepsilon}$ of equation (\ref{SDE1}) with
$x_{0}=0$ and $b$ as in (\ref{drift b}), satisfies%
\[
P_{\varepsilon}^{W}\overset{w}{\longrightarrow}p^{+}\delta_{x^{+}}+p^{-}%
\delta_{x^{-}},%
\]
where $p^{-}=1-p^{+}$ and 
\begin{equation}\label{eq: symmetric exit probabilities}
p^{+}=\left\{
\begin{array}
[c]{ccc}%
1 & \text{if} & \beta^{+}<\beta^{-}\\
\frac{(B^{-})^{-\frac{1}{1+\beta}}}{(B^{+})^{-\frac{1}{1+\beta}}%
+(B^{-})^{-\frac{1}{1+\beta}}} & \text{if} & \beta^{+}=\beta^{-}=:\beta\\
0 & \text{if} & \beta^{+}>\beta^{-}.
\end{array}
\right.
\end{equation}
This or part of this result was re-proved later on using other approaches, not
based on elliptic PDEs but only on tools of stochastic analysis and dynamical
arguments, see \cite{DFV, Trev}. These investigations are also
motivated by the fact that in dimension greater than one the elliptic PDE
approach is not possible.

The aim of this paper is to investigate these questions when the Wiener
process $W$ is replaced by a general pure-jump $\alpha$-stable process $L$. 
This process satisfies for any $a>0$ the following self-similarity condition 
$(L_{a t})_{t\gqq 0} \stackrel{d}{=} (a^\frac{1}{\al} L_t + \gamma_0 t)_{t\gqq 0}$, 
for a drift $\gamma_0\in \RR$ which accounts for the asymmetry of the law of $L$. 
The stochastic differential equation, then, takes the form%
\begin{equation}
X_{t}^{\varepsilon}=x_{0}+\int_{0}^{t}b\left(  X_{s}^{\varepsilon}\right)
ds+\varepsilon L_{t}, \qquad t\gqq 0, \e>0.\label{SDE}%
\end{equation}
Here explicit solution of the elliptic equations for exit time probabilities
are not feasible and thus it is again an example where we need to understand
the problem with new tools and ideas. This feature is similar to the theory of
asymptotic first exit times for equations with regular coefficients and small
noise, see \cite{DHI13, HoePav-14, ImkellerP-06, Pavlyukevich11} 
for recent progresses in the case of L\'{e}vy noise.
This requires a careful understanding of the role of small and large jumps,
which is conceptually new and interesting; technically the more demanding part
is the estimate of the Laplace transform of the exit times. Some ingredients
are also inspired by \cite{DFV}.

The main result is the following theorem.

\begin{theorem}\label{main theorem}
If $\alpha>1-\left(  \beta^{+}\wedge\beta^{-}\right)$ and $\beta^+ \neq \beta^-$, 
then for any $T>0$ 
\[
P_{\varepsilon}^{L}\overset{w}{\longrightarrow}p^{+}\delta_{x^{+}}+p^{-}%
\delta_{x^{-}}%
\]
where $P_{\varepsilon}^{L}$ is the
law, on Skorohod space $\mathbb{D}\left(  \left[  0,T\right]  ;\mathbb{R}%
\right) $, of the unique solution $X_{t}^{\varepsilon}$ of equation
(\ref{SDE}) with $x_{0}=0$ and $p^{+}, p^- = 1- p^+$ are given by (\ref{eq: symmetric exit probabilities}).
\end{theorem}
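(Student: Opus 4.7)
The plan is to reduce the selection problem to an exit problem from a small symmetric interval around $0$, and to show that the exit side is dictated by the sign of $\beta^- - \beta^+$. Assume without loss of generality that $\beta^+ < \beta^-$, so that the goal is $P_\e^L \overset{w}{\longrightarrow} \delta_{x^+}$ and (\ref{eq: symmetric exit probabilities}) yields $p^+ = 1$. The argument splits into three stages: (i) a microscopic rescaling of (\ref{SDE}) near the origin, revealing a universal exit problem on an order-one spatial window for an SDE whose drift still has the piecewise-power form (\ref{drift b}); (ii) an exit-probability estimate showing that the escape happens quickly and through the positive side with probability tending to $1$; (iii) a post-exit stability argument showing that once $X^\e$ reaches a macroscopic positive value, it tracks $x^+$ in Skorohod topology on $[0,T]$.

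For (i) the self-similarity of $L$ with exponent $1/\alpha$, together with $b(\lambda x) = \lambda^{\beta^\pm} b(x)$ on each half-line, dictates the natural scaling exponents $a^\pm := \alpha/(\alpha - 1 + \beta^\pm)$; the hypothesis $\alpha > 1 - (\beta^+ \wedge \beta^-)$ makes both $a^\pm$ positive, so these indeed provide genuine microscope scalings near $0$. Under $\beta^+ < \beta^-$ one has $a^+ > a^-$, so the positive half-line carries a larger spatial window and a comparatively stronger instantaneous drift; this built-in asymmetry between the two sides is the microscopic mechanism that tilts the selection toward the positive extremal branch.

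For (ii), which is the technical heart of the proof, the main obstacle is that, in contrast with the Brownian case \cite{BB}, no explicit ODE/PDE for exit-time quantities is available. I would decompose $L = L^{<r} + L^{\geq r}$ into its small-jump martingale part and its compound-Poisson large-jump part, with truncation level $r = r(\e)$ adapted to the drift scaling, in the spirit announced in the abstract and of \cite{DFV}. The small-jump component combined with the singular drift produces a controlled push-off from $0$ whose exit-time Laplace transform on a small symmetric interval can be bounded by a martingale/energy argument keyed to the dissipativity of $b$. The large jumps then trigger independent compound-Poisson exits, whose probabilities through the positive and negative sides are, at leading order, given by the tails of the L\'evy measure of $L$ weighted by the asymmetric windows $\e^{a^\pm}$; the ratio of these two contributions degenerates precisely so that the exit through the dominant side has probability tending to the indicator value $p^+ = 1$ of (\ref{eq: symmetric exit probabilities}) under $\beta^+ < \beta^-$.

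For (iii), on the high-probability event $\{X^\e_{t_\e} \geq x_1 > 0\}$ at a small time $t_\e \to 0$, the drift $b$ is locally Lipschitz on $[x_1,\infty)$ and $\e L$ is uniformly small, so a Gronwall-type estimate compares $X^\e$ on $[t_\e,T]$ to the deterministic solution through $(t_\e,x_1)$; continuous dependence along the extremal Peano branch then forces this solution arbitrarily close to $x^+$ as $(t_\e,x_1)\to(0,0)$. Concatenating the three stages yields the claimed weak convergence on $\mathbb{D}([0,T];\RR)$. The real bottleneck is the Laplace-transform estimate in (ii): the singular drift has to be combined with the small-jump noise tightly enough to deliver sharp, leading-order exit probabilities in the absence of any explicit solvability.
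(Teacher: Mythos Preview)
Your three-stage architecture matches the paper's, but the content of stages (ii) and (iii) is essentially swapped, and the mechanism you propose for the exit selection is not the one that works.

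In your stage (ii), you attribute the exit from the microscopic window to the large jumps, with probabilities read off from the tails of the L\'evy measure. The paper does the opposite: on the short time scale $t_\e$ needed to exit $(-\Theta^-_\e,\Theta^+_\e)$, the threshold for large jumps is chosen so that \emph{no} large jump occurs with probability tending to $1$ (equation (\ref{eq: tau kappa})). What drives the exit is the small-jump martingale part $\xi^\kappa$, to which one applies the optional stopping theorem exactly as in the Brownian case; the selection then comes purely from the \emph{asymmetry of the interval}. The boundaries $\Theta^\pm_\e$ are not obtained from your one-sided scalings $a^\pm=\alpha/(\alpha-1+\beta^\pm)$ but from the coupled system $B^+ t_\e(\Theta^+_\e)^{\beta^+}=\Theta^-_\e t_\e^{1-\vt}$, $B^- t_\e(\Theta^-_\e)^{\beta^-}=\Theta^+_\e t_\e^{1-\vt}$ (see (\ref{eq: boundaries equations part 1})), and for $\beta^+<\beta^-$ one gets $\Theta^+_\e/\Theta^-_\e\to 0$: the positive boundary is \emph{closer}, not farther, which is why the martingale exits there. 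Your sentence ``the positive half-line carries a larger spatial window and a comparatively stronger instantaneous drift'' has the geometry backwards.

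Your stage (iii) is where the genuine difficulty sits, and your sketch (``$\e L$ is uniformly small, so a Gronwall-type estimate'') does not survive $\alpha$-stable noise: $\sup_{t\le T}|\e L_t|$ is of order $\e T^{1/\alpha}$ in distribution but has heavy tails and, over any growing time window, jumps of order $1$ and larger occur with non-negligible probability. This is precisely Theorem~\ref{thm: exit} and the whole of Section~\ref{sec: large jumps}: the Laplace transform estimate you mention is computed \emph{here}, for the exit of $X^{x,\e}$ from the half-line $(\delta_\e,\infty)$, via a recursion over the large-jump arrival times $T_k$ that balances the deterministic escape $u(T_1;x)$ against the possibility that $\e W_k$ throws the solution back. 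The control of $\e L$ between large jumps uses the small-jump bound (\ref{eq: small noise finite interval}), not a naive supremum bound.

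Finally, you are missing a fourth ingredient: the exit from $(-\Theta^-_\e,\Theta^+_\e)$ lands at a level $\sim\Theta^\circ_\e$, which is much smaller than the $3\delta_\e$ needed as initial condition for the half-line estimate. Section~\ref{sec: linearized} bridges this gap by a linearized Gronwall-from-below argument (Proposition~\ref{prop: linearized dynamics}), showing the solution climbs from $\Theta^\circ_\e$ to $3\delta_\e$ in a vanishing time without falling back. Without this intermediate regime the two pieces do not connect.
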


The time interval where this convergence takes place can be chosen to be any
bounded interval $\left[  0,T\right]  $, but with a suitable reformulation of
the result it may also be an interval which increases like $\left[
0,\varepsilon^{-\theta^*}\right]  $, for suitable $\theta^*>0$, see the technical
statements below; this is a novelty compared with the literature on the
Brownian case. 

The condition $\al > 1- (\beta^+ \wedge \beta^-)$ 
appears naturally in the investigation of the local behavior of $X^\e$ close to 
the origin. It states that there exists a time scale $t_\e$ below which, 
the solution behaves mainly ``noise-like'', while for scales larger than $t_\e$ 
the drift takes over irresistibly. The condition ensures that this critical time scale 
tends to $0$. 

For this purpose we study an asymptotic first exit problem for the strong solution $X^\e$ 
of (\ref{SDE}) from a half-line. This is a problem in its own right. 
The proof of this result yields an asymptotic lower bound of $X^\e$ for times 
beyond the occurrence of the first ``large'' jump in an appropriate sense 
as stated in Corollary \ref{cor: short time scale convergence}. 
Before such first large jump, that is on a time scale up to $\e^{-\theta^*}$ 
however, the system exhibits the mentioned behavior similar to a Brownian perturbation. 
Among the other technical novelties, there is the use of the
linearized system in order to show that excursions away from the origin are
large enough.

It is well-known in the literature \cite{DHI13, HoePav-14b, IP08} 
in the case of systems of stable fixed points or attractors 
perturbed by a stable perturbation $\e L$, that the critical time scale is given by $\e^{-\al}$. 
The following exit time problem establishes 
that the critical time scale is larger than $\e^{-\al}$. 

\begin{theorem}\label{thm: exit}
For any $\beta^+ \in(0,1)$ and $\al \in (0,2)$ 
there is a monotonically increasing, continuous functions $\delta^+_\cdot: (0,1)\ra (0,1)$ 
of polynomial order with $\delta_\e\ra 0$ as $\e\ra0$ such that the first exit time 
\[
\tau^{x,\e, -} := \inf\{t>0~|~X^{\e, x}_t \lqq \delta_\e^+\} 
\]
of the solution $X^{x,\varepsilon}$ of (\ref{SDE}) 
satisfies for any function $m_\e \ra \infty$ with $\limsup_{\e \ra 0} m_\e \e^\al <\infty$  
\[
\lim_{\e\ra 0} \sup_{x\gqq 3\delta_\e^+} \PP(\tau^{x,\e, -} \lqq m_\e) = 0. 
\]
\end{theorem}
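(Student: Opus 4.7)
The natural strategy is to reduce the first exit event below $\delta_\e^+$ to a downward crossing of a moving barrier by the driving process $\e L$, exploiting the strict positivity of the drift on $[\delta_\e^+, \infty)$. I would choose $\delta_\e^+ = \e^\theta$ with a polynomial exponent $\theta$ strictly smaller than the critical balance $\alpha/(\alpha+\beta^+-1)$, the threshold at which the drift-driven excursion $t^{1/(1-\beta^+)}$ equals the typical noise amplitude $\e t^{1/\alpha}$; this places $\delta_\e^+$ well above the natural coexistence scale.

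The first step is a pathwise reduction. For $s < \tau := \tau^{x,\e,-}$ one has $X_s^\e \gqq \delta_\e^+$ and hence $b(X_s^\e) \gqq B^+(\delta_\e^+)^{\beta^+}$, so integrating the SDE yields
\begin{equation*}
X_{s \wedge \tau}^\e \;\gqq\; 3\delta_\e^+ + B^+ (\delta_\e^+)^{\beta^+}\, (s \wedge \tau) + \e\, L_{s \wedge \tau}.
\end{equation*}
Since $X_\tau^\e \lqq \delta_\e^+$, this forces, uniformly in $x \gqq 3\delta_\e^+$, the event
\begin{equation*}
A_\e := \Bigl\{\inf_{s \lqq m_\e}\bigl( B^+(\delta_\e^+)^{\beta^+}\, s + \e L_s\bigr) \;\lqq\; -2\delta_\e^+\Bigr\},
\end{equation*}
so the problem reduces to showing $\PP(A_\e) \ra 0$.

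For this barrier-crossing estimate, I would decompose $L = \hat L + \bar L$ at the cutoff $\rho_\e := \delta_\e^+/\e$, so that $\e \bar L$ collects jumps of size above $\delta_\e^+$ with intensity $\asymp \e^\alpha (\delta_\e^+)^{-\alpha}$, and split the interval $[0, m_\e] = [0, t_\e] \cup [t_\e, m_\e]$ with the drift-recovery time $t_\e := c (\delta_\e^+)^{1-\beta^+}/B^+$. On $[0, t_\e]$ the expected number of large jumps is of order $\e^{\alpha + \theta(1-\alpha-\beta^+)} \ra 0$, which is guaranteed precisely by the choice $\theta < \alpha/(\alpha+\beta^+-1)$; the compensated small-jump martingale has quadratic variation bounded by $\e^2 t_\e \rho_\e^{2-\alpha}$, and since its jumps are at most $\delta_\e^+$, its infimum is below $-\delta_\e^+$ only with probability $o(1)$ by a Bernstein-type tail bound. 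On $[t_\e, m_\e]$ the deterministic barrier $B^+ (\delta_\e^+)^{\beta^+} s - 2\delta_\e^+$ is already positive and grows linearly; a dyadic decomposition into $[2^k t_\e, 2^{k+1} t_\e]$ combined with the self-similarity of $L$ should yield a geometrically summable series for the probability of crossing on each scale.

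The main obstacle is the sharp control of the infimum of $\e L_s$ against the moving barrier. Crude Chebyshev or maximal inequalities are far too weak, because the typical fluctuation $\e m_\e^{1/\alpha}$ is of order one, much larger than $\delta_\e^+$; one must genuinely exploit the linear compensation supplied by the drift. This is where the Laplace-transform estimate of the exit times emphasized in the introduction should enter, providing the exponential decay needed to close the dyadic union bound. A further subtlety is the asymmetry drift $\gamma_0$ from the self-similarity identity for $L$, which enters the compensator (in particular for $\alpha \gqq 1$) and must be absorbed into the moving barrier; the complementary regime $\alpha < 1$, where the stable process has finite variation and no intrinsic compensator, requires a slightly different but analogous bookkeeping.
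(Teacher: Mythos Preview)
Your reduction $\{\tau^{x,\e,-}\le m_\e\}\subset A_\e$ via the drift lower bound $b(X_s^\e)\ge B^+(\delta_\e^+)^{\beta^+}$ is correct, and the resulting barrier-crossing formulation is a genuinely different route from the paper's. The paper never linearizes the drift: it splits $L=\xi^\e+\eta^\e$ at a threshold $\e^{-\rho}$ (with $\rho<1/(1+\al)$, unrelated to your $\rho_\e=\delta_\e^+/\e$), compares $X^\e$ between successive large-jump times $T_k$ to the \emph{full nonlinear} deterministic solution $u(t;x)=((1-\beta)B t+x^{1-\beta})^{1/(1-\beta)}$, and shows that by time $T_1$ the solution has escaped to a level $\gamma_\e\approx\e^{-\al\rho/(\Gamma(1-\beta))}\to\infty$. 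This produces a recursion for $\EE[e^{-\theta\e^\al\tau}\ind\{\tau\in(T_{k-1},T_k]\}]$ whose resolution involves summing $\sum_k k^{-\al\rho}(1-c\e^\kappa)^k$ and a polylogarithm asymptotic; the superlinear escape $u\sim t^{1/(1-\beta)}$ is what makes the jump at $T_k$ land at level $\sim k\gamma_\e$, not at a level governed by your slope $c_\e=(\delta_\e^+)^{\beta^+}\to 0$.

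The gap in your proposal is precisely the step you flag as the main obstacle. Your linear barrier has slope $c_\e\to 0$, so on the dyadic block $[2^k t_\e,2^{k+1}t_\e]$ the event $\{\inf_s \e L_s\le -c_\e 2^k t_\e\}$ has probability of order $(\e/c_\e)^\al (2^k t_\e)^{1-\al}$ by self-similarity and the stable tail; for $\al\le 1$ these terms \emph{grow} in $k$ rather than decaying geometrically, so the series is not geometrically summable as you claim. One can still try to show the sum over $k\le\log(m_\e/t_\e)$ is $o(1)$ (the worst term is at $k_{\max}$ and equals roughly $\e^{\al^2-\theta\al\beta^+}$, which vanishes if $\theta<\al/\beta^+$), but this is a different argument than the one you sketch and needs care near $s=0$ where neither the drift nor the cushion $2\delta_\e^+$ is large. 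By discarding the superlinear growth of $b$ you have made the barrier problem essentially as hard as the original exit problem; the paper's use of the nonlinear comparison $Y^{x,\e}_t\ge u(t;x-\delta_\e)-\delta_\e$ is what converts the problem into a tractable recursion with explicitly controllable error terms $S_1,\dots,S_5$.
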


The article is structured as follows. After a brief set of notations, 
we show the previously first exit result of Theorem \ref{thm: exit} 
in Section~\ref{sec: large jumps}. 
This is carried out for initial values which may approach $0$ as a function of $\e$, 
however only sufficiently slowly, as $\e\ra 0$. 
Section~\ref{sec: close to the origin} zooms into the behavior of the solution 
in a space-time box of short temporal and spatial scales around the origin and determines 
the exit probabilities to each spatial side of the box 
with the help of the self-similarity of the driving L\'evy noise. 
In Section~\ref{sec: linearized} it is shown that an unstable 
linearized intermediate regime stabilizes the exit direction from 
the small environment of the origin and rapidly enhances the solution 
until it reaches the area of initial values for the regime in Section~\ref{sec: large jumps}. 
In Section~\ref{sec: Hauptsatzbeweis} we prove a slightly stronger result, which implies Theorem \ref{main theorem}.

\section{Preliminaries}\label{sec: preliminaries}

For the following notation we refer to Sato \cite{Sato-99}.  
A L\'evy process $L$ with values in the real line 
over a given probability space $(\Omega, \fF, \PP)$ is 
a stochastic process $L = (L_t)_{t\gqq 0}$ starting in $0\in \RR$ 
with independent and identically distributed increments. 

The L\'evy-Khintchine formula establishes the following representation of the characteristic function of the marginal law of the 
L\'evy process $Z$. There exists a drift $\gamma \in \RR$, $\si>0$ and 
a $\sigma$-finite Borel measure $\nu$ on $\RR$, 
the so-called \textit{L\'evy measure}, satisfying 
\begin{equation}\label{eq: def Levy measure 1}
\nu\{0\}=0, \qquad \mbox{ and }\quad \int_{\RR}(1 \wedge |u|)^2\nu(du)<\infty, 
\end{equation}
such that for any $t\gqq 0$ the characteristic function reads 
\begin{align}
&\EE[ e^{i z L_t}] = e^{t\psi(z)},  \quad z\in \RR,\nonumber\\
&\psi(z) = i \gamma z - \frac{\si^2 z^2}{2} 
+ \int_{\RR} (e^{i z y}-1 - i z y\ind\{|y|\lqq 1\}) \nu(dz).
\label{eq: Levy-Khinchin}
\end{align}
The triplet $(\gamma, \si, \nu)$ determines the process $L$ in law uniquely. 

A symmetric $\alpha$-stable process $L$ in law for $\al\in (0,2)$ is a L\'evy process 
with canonical triplet $(0, 0, \nu)$, 
where $\nu$ is given as 
\begin{equation}\label{def: nu}
\nu(dy) =  \frac{c}{y^{\al+1}} \ind\{y< 0\}+\frac{c}{y^{\al+1}} \ind\{y> 0\}, 
\end{equation}
with $c>0$.

A symmetric $\al$-stable processes $L$ in law satisfies the following self-similarity property.  
Given the L\'evy measure associated to $\nu$ for any $a>0$  
\begin{equation}
(L_{a t})_{t\gqq 0} \stackrel{d}{=} (a^\frac{1}{\al} L_t)_{t\gqq 0}.
\end{equation}
For details consult \cite{Sato-99}, Section 8 and 14. 

The L\'evy-It\^o decomposition \cite{Sato-99}, Theorem 19.2, 
yields the pathwise representation 
\begin{equation}\label{eq: Levy Ito}
L_t = \int_{0}^t \int_{0< |y|\lqq 1} y (N(ds dy)-ds \nu(dy)) + \int_0^t \int_{|y|> 1} y N(ds dy) \qquad \mbox{ for all }t\gqq 0, \PP-\mbox{a.s., }
\end{equation}
where $N([0, t] \times B, \om) = \#\{s\in [0, t] \in \RR~|~(s, \Delta L_t(\om)) \in B\}$ for $t \gqq 0$, $B\in \bB(\RR)$ and $\om \in\Om$, 
is the Poisson random measure associated to $dt \otimes \nu$. 

\begin{proposition}
Let $\beta^+, \beta^- \in (0, 1)$, $B^+, B^->0$ and $L$ be a pure jump $\alpha$-stable process with 
$\al\gqq 1-(\beta^+\wedge \beta^-)$ over a given filtered probability space given by (\ref{eq: Levy Ito}). 
Then equation (\ref{SDE}) with these coefficients has a unique strong solution, 
which satisfies the strong Markov property. 
\end{proposition}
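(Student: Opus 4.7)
The plan is to reduce the proposition to the classical Yamada--Watanabe scheme by establishing weak existence and pathwise uniqueness separately. As a preliminary observation, $b$ from (\ref{drift b}) is continuous on $\RR$ and locally $\beta$-H\"older continuous with $\beta:=\beta^+\wedge\beta^-$: subadditivity of $t\mapsto t^{\beta^\pm}$ gives $|b(x)-b(y)|\leq B^\pm|x-y|^{\beta^\pm}$ on each half-line, and the cross estimate $|b(x)-b(y)|\leq(B^++B^-)(|x|+|y|)^\beta$ across the origin. Furthermore $b$ has sublinear growth, which prevents explosion via a routine estimate on $\EE[\sup_{s\leq t}|X^\e_s|]$. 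Weak existence of a c\`adl\`ag solution to (\ref{SDE}) is then standard: smooth $b$ to bounded $C^1$ drifts $b_n\to b$ uniformly on compacts, solve the associated SDEs classically, and pass to the limit using tightness in $\mathbb{D}([0,T];\RR)$ (Aldous' criterion via the sublinear drift bound and the square-integrability of the compensated small-jump integral) together with the continuity of $b$ to identify the limit as a solution of the martingale problem for $\lL^\al+b\,\partial_x$.

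The heart of the argument, and the reason for the assumption $\al\geq 1-\beta$, is pathwise uniqueness. I would invoke the Priola--Zvonkin programme for $\al$-stable SDEs: for $\lambda$ sufficiently large, solve the resolvent equation
\[
\lambda U(x)-\lL^\al U(x)-b(x)U'(x)=b(x),\qquad x\in\RR,
\]
where $\lL^\al$ is the generator of $L$, and use Schauder-type estimates for $\lL^\al$ on H\"older spaces to obtain $U\in C^{\al+\beta}_{\mathrm{loc}}$. The condition $\al+\beta>1$ is exactly what is needed so that $U'$ is $(\al+\beta-1)$-H\"older and $\Phi(x):=x+U(x)$ is a $C^1$-diffeomorphism of $\RR$ with Lipschitz inverse. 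Applying an It\^o formula for jump processes to $\Phi(X^\e)$ cancels the singular drift and converts (\ref{SDE}) into an equation whose non-jump drift and jump coefficient are Lipschitz in the spatial variable, whence pathwise uniqueness follows from a standard Gronwall argument on $|\Phi(X^\e)-\Phi(\widetilde X^\e)|^2$ after compensation of the small-jump integrals.

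The main technical obstacle I expect is the control of the non-local operator $\lL^\al$ near the cusp $x=0$, where $b'$ is singular; one must verify that the H\"older regularity of $U$ provided by the Schauder estimates holds \emph{globally} and not merely away from $0$, so that $\Phi$ is a diffeomorphism on all of $\RR$. Once weak existence and pathwise uniqueness are established, Yamada--Watanabe yields a unique strong solution $X^\e$. The strong Markov property then follows by the usual argument: for a bounded stopping time $\tau$, the shifted process $(L_{\tau+t}-L_\tau)_{t\geq 0}$ is a L\'evy process independent of $\fF_\tau$ with the same law as $L$, so strong uniqueness identifies the conditional law of $(X^\e_{\tau+t})_{t\geq 0}$ given $\fF_\tau$ with that of the unique solution started afresh from $X^\e_\tau$.
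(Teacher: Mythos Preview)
The paper does not give a proof of this proposition at all: it simply refers the reader to Tanaka, Tsuchiya and Watanabe \cite{TTW74}. So there is no ``paper's own proof'' to compare against, only the cited reference.

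Your route is nonetheless worth contrasting with what is actually done in \cite{TTW74}. Tanaka--Tsuchiya--Watanabe do \emph{not} use a Zvonkin-type transformation; their argument for pathwise uniqueness is a direct comparison estimate. Roughly, they control the difference of two solutions $X,\widetilde X$ driven by the same $L$ by exploiting the H\"older continuity of $b$ together with the local time/occupation-time regularity of the $\alpha$-stable process; the condition $\alpha>1-\beta$ enters as an integrability condition ensuring that the singular integral $\int_0^t|X_s-\widetilde X_s|^{\beta-1}\,ds$ (or its regularised variant) can be closed into a Gronwall inequality. This is more elementary than your proposal and avoids any PDE machinery.

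Your Zvonkin/Priola scheme is a legitimate modern alternative, but be aware of two points where the sketch is optimistic. First, the Schauder theory you invoke and the It\^o formula step are well documented for $\alpha\in(1,2)$ (Priola), but for $\alpha\leq 1$ the regularity gain $C^\beta\to C^{\alpha+\beta}$ alone does not automatically produce Lipschitz coefficients after the transform; the closing Gronwall step on $|\Phi(X)-\Phi(\widetilde X)|^2$ would need a replacement adapted to the $\alpha$-variation of $L$, and you should say precisely which result you are citing for small $\alpha$. Second, the drift $b$ here is only locally $\beta$-H\"older (with constant blowing up at infinity due to $|x|^{\beta^\pm}$ growth), so the global diffeomorphism claim for $\Phi$ requires a localisation or weighted-space argument that you have only flagged as an ``obstacle''. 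None of this is fatal, but if you want a self-contained proof rather than a citation, the TTW direct method is both shorter and covers the full range $\alpha>1-\beta$ without these caveats.
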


The result is given in Tanaka \cite{TTW74}.

\section{An exit problem from the half-line: Proof of Theorem \ref{thm: exit}}\label{sec: large jumps}
\subsection{Proof of Theorem \ref{thm: exit}}

The proof of Theorem \ref{thm: exit} is structured in four parts. 
After the technical preparation and two essential observations 
we derive the main recursion. In the last part we conclude. 

\paragraph{1) Setting and notation: } Let us denote $u(t; x) := X^{x, 0}_t$ for convenience.
The first observation is the following. 
Let $\delta>0$ and $x\in \RR$ an initial value with $|x|>\delta$. 
Then $b\big|_{\mathbb{R}\setminus [-\delta, \delta]}$ 
satisfies global Lipschitz and growth conditions, such that there exists a unique 
strong local solution, which lives until to the stopping time 
\[
\tau^{x, \varepsilon, \delta} := \inf\{t>0~|~X^{x,\varepsilon}_{t} \in [-\delta, \delta]\}. 
\]
Here the Lipschitz constant depends essentially on $\delta$ and explodes as \mbox{$\delta \searrow 0$.} 
As usually in this situation, we divide the process $L = \eta^\varepsilon + \xi^\varepsilon$ by 
a $\varepsilon$-dependent threshold $\varepsilon^{-\rho}$, 
where $\rho\in (0,1)$ is a parameter to be made precise in the sequel.
More precisely the compound Poisson process with 
\[
\eta^\varepsilon_t = \sum_{i=1}^\infty W_i \ind\{T_i \lqq t\}
\]
with arrival times $T_i = \sum_{j=1}^i t_i$, where $t_i$ i.i.d. waiting times 
and i.i.d. ``large'' jump increments $(W_i)_{i\in \mathbb{N}}$ with the conditional law 
\begin{align}
W_i &\sim \frac{1}{\lambda_\varepsilon}\nu(\cdot \cap (\mathbb{R} \setminus [-\varepsilon^{-\rho}, \varepsilon^{-\rho}]))\label{eq: Wi}\\
t_i &\sim \mbox{EXP}(\la_\e)\qquad \mbox{ for } i\in \NN,\label{eq: ti}
\end{align}
where
\begin{equation}\label{eq: la eps}
\lambda_\varepsilon = \nu(\mathbb{R} \setminus [-\varepsilon^{-\rho}, \varepsilon^{-\rho}]) 
= 2 \int_{\e^{-\rho}}^\infty \frac{dy}{y^{\alpha+1}} = \frac{2}{\alpha} \e^{\alpha\rho},
\end{equation}
and the remaining semi-martingale 
\begin{align}
\xi^\varepsilon = L - \eta^\varepsilon 
\end{align}
with uniformly bounded jumps, which implies 
the existence of exponential moments.
Let us denote by $Y^{x, \varepsilon}$ the solution of 
\begin{equation}
Y_{t}^{x,\varepsilon}=x + \int_0^t b\left(  Y_{s}^{x,\varepsilon}\right)  ds+\varepsilon
\xi^\varepsilon_t, 
\label{SDE small jumps}%
\end{equation}
which exists uniquely under the same conditions as does $X^{x, \varepsilon}$. 
For $\delta>0$ we fix the notation 
\begin{align*}
D_{\delta}^+ &:= (\delta, \infty). 
\end{align*}
For a function $\delta_\cdot: (0,1) \ra (0,1)$ with $\delta_\e\searrow 0$ 
to be specified later we fix 
\begin{align*}
&\tau^{x, \varepsilon, -} := \inf\{t>0~|~X^\varepsilon_{t,x} \notin D_{\delta_\varepsilon}^+\}.
 \end{align*}

\paragraph{2) Two observations: } 
The following observations reveal the first exit mechanism. 
\paragraph{2.1) Up to the first large jump, the deterministic solutions travel sufficiently far: } 
Separation of variables yields the explicit representation for $t\gqq t'$ and $x\gqq 0$
\begin{equation}\label{eq: explicit solution}
u(t;t', x) = \left(B (1-\beta) (t-t') +  x^{1-\beta}\right)^{\frac{1}{1-\beta}}.
\end{equation}
Hence for $z\gqq x$ and $t'=0$, we obtain 
\begin{align*} 
P\left(u(T_1;x) \gqq z\right) & = P\big(\big(B (1-\beta)T_1+  x^{1-\beta}\big)^{\frac{1}{1-\beta}}\gqq z\big) \\
& = P\big(T_1 \gqq \frac{z^{1-\beta}-x^{1-\beta}}{B(1-\beta)} \big)\\
& = \exp\Big(- (z^{1-\beta}-x^{1-\beta})\frac{\lambda_\varepsilon}{B(1-\beta)} \Big) \\
& = P(Z\gqq z ~|~ Z\gqq x). 
\end{align*}
This is the tail of the distribution function of a Weibull distributed random variable $Z$ with 
shape parameter $1-\beta$ and scaling parameter 
\[\Big(\frac{\lambda_\varepsilon}{B(1-\beta)}\Big)^{\frac{1}{1-\beta}} = \frac{\e^{\frac{\alpha\rho}{1-\beta}}}{B(1-\beta)^{\frac{1}{1-\beta}}}\]
conditioned on the event $\{Z\gqq x\}$. 
We define for $\Gamma >1$ such that $\Gamma < \frac{1}{1-\beta}$ and 
\begin{align*}
\gamma_\varepsilon &:= (\la_\e^{-\frac{1}{\Gamma}} - (3\delta_\e)^{1-\beta})^{\frac{1}{1-\beta}} \approx_\e \e^{-\frac{\alpha}{\Gamma} \frac{\rho}{1-\beta}}.
\end{align*}
Hence 
\begin{align}\label{eq: det escape from 0}\
\lim_{\e\ra 0+} \sup_{x\in D_{3\delta_\e}} \PP(u(T_1;x) \gqq \gamma_\e) \ra 1,
\end{align}
and 
\begin{align*}
\sup_{3\delta_\e \lqq x\lqq \gamma_\e} \PP(u(T_1;x) \lqq 2\gamma_\e) 
&\approx_\e \la_\e^{1-\frac{1}{\Gamma}}.
\end{align*}

\paragraph{2.2) Control the deviation of the small jump solution from the deterministic solution: } 
For each $\rho\in (0,1)$ there are functions $\delta_\cdot: (0,1) \ra (0,1)$, \mbox{$r^\cdot: (0,1) \ra (0, \infty)$} such that 
\begin{align*}
\e^{\alpha \rho} r^\e \ra \infty\quad \mbox{ and } \quad \frac{\delta_\e}{\e^{1-\rho } r^\e} \ra \infty . 
\end{align*}
Put in other terms the first result means $r^\e \gtrsim_\e \frac{1}{\e^{\al\rho}}$. 
We define 
\begin{equation}\label{def: r eps}
r^\e := \frac{|\ln(\e)|^2}{\e^{\al\rho}}. 
\end{equation}
For the second expression we have  
\begin{equation}\label{eq: delta limit}
\infty \leftarrow \frac{\delta_\e}{\e^{1-\rho } r^\e} = \frac{\delta_\e}{\e^{1-\rho- \alpha\rho}} \frac{1}{\e^{\alpha\rho} r^\e}.
\end{equation}
Therefore a necessary condition for (\ref{eq: delta limit}) to be satisfied  
is $\delta_\e \gtrsim_\e \e^{1-\rho(1+\al)}$. 
We define 
\begin{equation}\label{def: delta eps}
\delta_\e := \e^{1-\rho(1+\al)}|\ln(\e)|^4.
\end{equation}
For the right-hand side to tend to $0$ is equivalent to 
\begin{equation}\label{eq: rho upper bound}
\rho < \frac{1}{\al+1}. 
\end{equation}
In particular for all $\al\in (0,2)$ 
\begin{equation}\label{eq: al times rho}
\al \rho < \frac{\al}{1+\al} < \frac{2}{3}<1.
\end{equation}
Since $\xi^\e$ has exponential moments we can compensate it 
\begin{align*}
\ti \xi^\e_t:= \xi^\e_t - t \EE[\xi^\e_1].
\end{align*}
It is a direct consequence of Lemma 2.1 in \cite{IP08}, 
which treats the same situation, that for any $c>0$
\begin{equation}\label{eq: small noise finite interval}
\PP(\sup_{t\in [0, r^\e]} |\e \ti \xi^\e|> c) \lqq \exp( - \frac{c}{\e^{1-\rho} r^\e}). 
\end{equation}
A small direct calculation or Lemma 3.1 in \cite{IP08} yields that there is constant $h_1>0$ such that 
\[
|\EE[\e\xi^\e_1] | \lqq h_1 \e^{1-\rho}.
\]
The choice of $r^\e$ in (\ref{def: r eps}) and $\rho$ in (\ref{eq: rho upper bound}) 
we obtain that 
\begin{equation}\label{eq: drift is smaller than delta}
h_1 r^\e \e^{1-\rho} = \e^{1-(\al+1)\rho}|\ln(\e)|^2 \lqq_\e  \e^{1-(\al+1)\rho}|\ln(\e)|^4 = \delta_\e. 
\end{equation}
Hence for any $\e>0$ sufficiently small 
we have $|r^\e \EE[\e \xi^\e_1]|\lqq \delta_\e$ and infer 
\begin{align}
\PP(\sup_{t\in [0, T_1]} |\e \xi^\e_t|> c) 
&= \PP(\sup_{t\in [0, T_1]} |\e \ti \xi^\e_t|> c) \nonumber\\
&\lqq \PP(\sup_{t\in [0, r^\e]} |\e \ti \xi^\e_t|> c) + \PP(T_1 > r^\e)\nonumber\\
&\lqq  \exp( - \frac{c}{\e^{1-\rho} r^\e}) + \exp(-\e^{\alpha\rho} r^\e). \label{eq: entire small noise}
\end{align}
Denote $V^{x, \e}_t = Y^{x, \e}_t-c- \e \xi^\e_t$. 
The monotonicity of $b$ on $(0, \infty)$ yields on the events $\{t\in [0, T_1]\}$
and $\{\sup_{t\in [0, T_1]} |\e \xi^\e_s|\lqq 2c\}$ that 
\begin{align*}
V^{x,\e}_t &= x-c + \int_0^t b(V^{x, \e}_s +c +  \e \xi^\e_s) ds \\
&\gqq x-c+ \int_0^t b(V^{x,\e}_s) ds.
\end{align*}
By (\ref{eq: drift is smaller than delta}) we may set $c=\delta_\e$ we obtain
\begin{align*}
V^{x,\e}_t &\gqq x-\delta_\e + \int_0^t b(V^{x,\e}_s) ds \qquad t\in [0, T_1].
\end{align*}
Hence an elementary comparison argument implies under these assumptions 
\[
V^{x,\e}_t \gqq u(t; x-\delta_\e), \qquad \mbox{ for all } \quad t\in [0,T_1], \quad x\gqq \delta_\e.
\]
In particular in the preceding setting we take the supremum over all $x\gqq 4\delta_\e$ and obtain 
\begin{align}
&\sup_{x\in D_{3\delta_\e}^+} \PP(\sup_{t\in [0, T_1]} (Y^{x,\e}_{t} - (u(t; x-\delta_\e)-\delta_\e))<0)\nonumber \\
&\lqq \PP(\sup_{t\in [0, T_1]} |\e \xi^\e|> \delta_\e) 
\lqq  \exp( - \frac{\delta_\e}{\e^{1-\rho} r^\e}) + \exp(-\e^{\alpha\rho} r^\e) = 2\e^2. \label{ineq: y-u<0a}
\end{align}
With the identical reasoning we obtain
\begin{align}
\sup_{x\gqq (i-1) \gamma_\e} \PP(\sup_{t\in [0, T_1]} (Y^{x,\e}_{t} - (u(t; x-\delta_\e)-\delta_\e))<0) 
&\lqq \PP(\sup_{t\in [0, T_1]} |\e \xi^\e|> i \gamma_\e) \nonumber\\
&\lqq  \exp( - \frac{i \gamma_\e}{2\e^{1-\rho} r^\e}) + \exp(-\frac{i \e^{\alpha\rho} r^\e}{2}). \label{ineq: y-u<0b}\\\nonumber
\end{align}
\begin{rems}
In the light of the observations 2.1) and 2.2) it is clear that 
the exit behavior is mainly determined by the behavior of the large jumps $\e W_i$. 
 \end{rems}

\paragraph{3) Estimate of the Laplace transform of the exit time: } 
We estimate the Laplace transform of the first exit time. Let $\theta>0$. 
Then 
\begin{align*}
\sup_{x\in D_{3\delta_\e}^+}\mathbb{E}\Big[e^{-\theta\varepsilon^\alpha \tau^{x,\varepsilon, -}} \Big] 
&= \sum_{k=1}^\infty \sup_{x\in D_{3\delta_\e}^+}\mathbb{E}\Big[e^{-\theta\varepsilon^\alpha \tau^{x, \varepsilon, -}} 
\mathbf{1}\{\tau^{x,\varepsilon,-} \in (T_{k-1}, T_k]\} \Big] \\
&\lqq \sum_{k=1}^\infty \sup_{x\in D_{3\delta_\e}^+}\mathbb{E}\Big[e^{-\theta \varepsilon^\alpha T_{k-1}} 
\mathbf{1}\{\tau^{x,\varepsilon,-} \in (T_{k-1}, T_k]\} \Big] \\
&\lqq \sum_{k=1}^{n_\varepsilon} \sup_{x\in D_{3\delta_\e}^+}\mathbb{E}\Big[e^{-\theta \varepsilon^\alpha T_{k-1}} 
\mathbf{1}\{\tau^{x,\varepsilon,-} \in (T_{k-1}, T_k]\} \Big] 
+ \sum_{k=n_\varepsilon}^{\infty} \mathbb{E}\Big[e^{-\theta \varepsilon^\alpha T_{1}} \Big]^k\\
& =: \sum_{k=1}^{n_\e} \iI_1(k) + \iI_2 =: \iI_1 + \iI_2.
\end{align*}
\textbf{3.1) The infinite remainder: } For the second sum we obtain  
\begin{align}
\iI_2 &= \sum_{k=n_\varepsilon}^{\infty} \Big(\frac{1}{1+ \frac{\theta \varepsilon^\alpha}{\lambda_\varepsilon}}\Big)^k
= \sum_{k=n_\varepsilon}^{\infty} e^{k \ln\Big(1-\frac{\theta \varepsilon^\alpha}{\lambda_\varepsilon}\Big)}
\lesssim_\varepsilon \sum_{k=n_\varepsilon}^{\infty} e^{-k \frac{2\theta \varepsilon^\alpha}{\lambda_\varepsilon}}
= \frac{e^{-n_\varepsilon \frac{2\theta \varepsilon^\alpha}{\lambda_\varepsilon}}}{1-e^{- \frac{2\theta \varepsilon^\alpha}{\lambda_\varepsilon}}}\nonumber\\
&\lesssim_\varepsilon \frac{e^{-n_\varepsilon \frac{2\theta \varepsilon^\alpha}{\lambda_\varepsilon}}}{\frac{2\theta \varepsilon^\alpha}{\lambda_\varepsilon}}
= e^{-n_\varepsilon \frac{2\theta \varepsilon^\alpha}{\lambda_\varepsilon} - \ln(\frac{2\theta \varepsilon^\alpha}{\lambda_\varepsilon})} 
=: S_1(\varepsilon). \label{eq: minimal growth n}
\end{align}
In order to get $S_1(\e)\ra 0$ as $\e\ra 0$, 
we need the asymptotics  
\begin{equation}\label{eq: asymptotics for n}
n_\e \e^{\alpha(1-\rho)}+\ln(\e) \ra \infty,
\end{equation}
or for simplicity 
\[n_\e \gtrsim_\e \frac{1}{\e^{\al(1-\rho)}}+|\ln(\e)|.\]
If we define 
\begin{equation} \label{def: n eps}
n_\e := \frac{|\ln(\e)|^2}{\e^{\al(1-\rho)}}, 
\end{equation}
we obtain 
\begin{align*}
S_1(\e) \approx_\e \e^{2+\al(1-\rho)}\ra 0\qquad \mbox{ as }\e \ra 0.
\end{align*}

\paragraph{3.2) Estimate of the main sum: } 
The rest of the proof is devoted to estimate $\sum_{k=0}^{n_\e} \iI_1(k)$. 
We define the following events for $y\in D_{3\delta_\e}^+$ and $s, t\gqq 0$ by 
\begin{align*}
A_{t,s,y}^- &:= \{X_{r}^{\cdot, \varepsilon} \circ \theta_{s}(y)\in D_{\delta_\varepsilon}^+ \mbox{ for all } r\in [0, t]\},\\
B_{t,s,y}^- &:= \{X_{r}^{\cdot, \varepsilon} \circ \theta_{s}(y)\in D_{\delta_\varepsilon}^+ \mbox{ for all } r\in [0, t) \mbox{ and }
X_{t}^{\cdot, \varepsilon} \circ \theta_{s}(y) \notin D^+_{\delta_\varepsilon}\}.
\end{align*}
Recall the waiting times $t_k := T_k - T_{k-1}$ and exploit the decomposition
\begin{align*}
\{\tau^{x, \varepsilon, -}  \in (T_{k-1}, T_k] \} 
&= \bigcap_{i=1}^{k-1} A^-_{t_i, T_{i-1}, X_{T_{i-1}, x}} \cap \Big(\bigcup_{t\in (0, t_k]} B_{t, T_{k-1}, x}^-\Big).
\end{align*}

\paragraph{3.2.1) Derivation of the recursion for the idealized exit from an unstable point $0$: } 
We estimate $\iI_1(k)$ with the help of the strong Markov property 
\begin{align*}
&\iI_1(k) \\
&= \sup_{x\in D_{3\delta_\e}^+}\mathbb{E}\Big[\mathbb{E}\Big[\prod_{i=1}^{k-1} e^{-\theta\lambda_\varepsilon t_i} 
\mathbf{1}\Big(A^-_{t_i, T_{i-1}, X_{T_{i-1}, x}}\Big) \\
&\qquad \Big(
 \mathbf{1}\big\{u(T_1;x-\delta_\e)-\delta_\e+ \varepsilon W_1 > \la_\e^{-\frac{1}{\Gamma(1-\beta)}}\big\}+ 
(\mathbf{1}\big\{u(T_1;x-\delta_\e)-\delta_\e+ \varepsilon W_1 \lqq \la_\e^{-\frac{1}{\Gamma(1-\beta)}}\big\}\Big) \\
&\qquad\Big(\mathbf{1}\big\{\sup_{t\in [0, T_1]} (Y^{x, \varepsilon, 1}_t - (u(t;x-\delta_\e)-\delta_\e)) \gqq 0 \big\} +
 \mathbf{1}\big\{\sup_{t\in [0, T_1]} (Y^{x, \varepsilon, 1}_t - (u(t;x-\delta_\e)-\delta_\e))< 0 \big\}\Big)\\
&\qquad \qquad \qquad \mathbf{1}\big(\bigcup_{t\in (0, T_{k}- T_{k-1}]} B_{t, T_{k-1}, x}^-\big)~|~ \mathcal{F}_{T_1}\Big]\Big]\\
&\lqq \sup_{y\in D_{3\delta_\e}^+} \mathbb{E}\Big[e^{-\theta \lambda_\varepsilon T_1} \mathbf{1}\Big(A^-_{T_1 , 0, y}\Big)
\mathbf{1}\big\{\sup_{t\in [0, T_1]} (Y^{y, \varepsilon, 1}_t - (u(t;y-\delta_\e)-\delta_\e))\gqq 0\big\}
\Big]\\
&\qquad \sup_{y\gqq \gamma_\varepsilon} \mathbb{E}\Big[\prod_{i=1}^{k-1} e^{-\theta\lambda_\varepsilon t_i} 
\mathbf{1}\Big(A^-_{t_i, T_{i-1}, X_{T_{i-1}, y}}\Big)
\mathbf{1}\Big(\bigcup_{t\in (0, T_{k}- T_{k-1}]} B_{t, T_{k-1}, y}^-\Big)\Big]\\
&\qquad + \sup_{y\in D_{3\delta_\e}^+} \mathbb{E}\Big[e^{-\theta \lambda_\varepsilon T_1} 
\mathbf{1}\big\{\sup_{t\in [0, T_1]} (Y^{y, \varepsilon, 1}_t - (u(t;y-\delta_\e)-\delta_\e))<0 \big\}
\Big] \\
&\qquad + \sup_{y\in D_{3\delta_\e}^+} \mathbb{E}\Big[e^{-\theta \lambda_\varepsilon T_1} 
\mathbf{1}\Big\{u(T_1;y-\delta_\e)-\delta_\e+ \varepsilon W_1 \lqq \la_\e^{-\frac{1}{\Gamma(1-\beta)}}\Big\}\Big]
\end{align*}
where we recall that 
$\gamma_\varepsilon = (\la_\e^{-\frac{1}{\Gamma}} - (3\delta_\e)^{1-\beta})^{\frac{1}{1-\beta}}$. 
Taking a closer look we may identify the preceding inequality as the recursive estimate 
\begin{align}
&\sup_{x\in D_{3\delta_\e}^+} \mathbb{E}\Big[e^{-\theta\lambda_\varepsilon T_{k-1}} 
\mathbf{1}\{\tau^{x, \varepsilon, - } \in (T_{k-1}, T_k]\}\Big] \nonumber\\
&\lqq \sup_{y\gqq \gamma_\varepsilon} 
\mathbb{E}\Big[e^{-\theta\lambda_\varepsilon T_{k-2}} \mathbf{1}\{\tau^{y, \varepsilon, - } \in (T_{k-2}, T_{k-1}]\}\Big] \nonumber\\
&\qquad\cdot \sup_{y\in D_{3\delta_\e}^+} \mathbb{E}\Big[e^{-\theta \lambda_\varepsilon T_1} \mathbf{1}\Big(A^-_{T_1 , 0, y}\Big)
\mathbf{1}\big\{\sup_{t\in [0, T_1]} (Y^{y, \varepsilon, 1}_t - (u(t;y-\delta_\e)-\delta_\e))\gqq 0 \big\}
\Big] \nonumber\\
&\qquad + \sup_{y\in D_{3\delta_\e}^+} \mathbb{E}\Big[e^{-\theta \lambda_\varepsilon T_1} 
\mathbf{1}\big\{\sup_{t\in [0, T_1]} (Y^{y, \varepsilon, 1}_t - (u(t;y-\delta_\e)-\delta_\e))<0 \big\}
\Big]  \nonumber\\
&\qquad + \sup_{y\in D_{3\delta_\e}^+} \mathbb{E}\Big[e^{-\theta \lambda_\varepsilon T_1} 
\mathbf{1}\Big\{u(T_1;y-\delta_\e)-\delta_\e+ \varepsilon W_1 \lqq \la_\e^{-\frac{1}{\Gamma(1-\beta)}}\Big\}\Big].\label{eq: recursion i=1}
\end{align}
The same reasoning yields for all $2 \lqq i \lqq k$ the recursive inequality
\begin{align*}
&\sup_{x \gqq (i-1) \gamma_\varepsilon} \mathbb{E}\Big[e^{-\theta\lambda_\varepsilon T_{k-1}} 
\mathbf{1}\{\tau^{x, \varepsilon, - } \in (T_{k-1}, T_k]\}\Big] \\
&\lqq \sup_{y\gqq i \gamma_\varepsilon} 
\mathbb{E}\Big[e^{-\theta\lambda_\varepsilon T_{k-2}} \mathbf{1}\{\tau^{y, \varepsilon, - } \in (T_{k-2}, T_{k-1}]\}\Big] \\
&\qquad \cdot \sup_{y\gqq (i-1) \gamma_\varepsilon} \mathbb{E}\Big[e^{-\theta \lambda_\varepsilon T_1} 
\mathbf{1}\Big(A^-_{T_1 , 0, y}\Big)
\mathbf{1}\big\{\sup_{t\in [0, T_1]} (Y^{y, \varepsilon, 1}_t - (u(t;y-\delta_\e)-\delta_\e))\gqq 0\big\}
\Big]\\
&\qquad + \sup_{y\gqq (i-1) \gamma_\varepsilon} \mathbb{E}\Big[e^{-\theta \lambda_\varepsilon T_1} 
\mathbf{1}\big\{\sup_{t\in [0, T_1]} (Y^{y, \varepsilon, 1}_t - (u(t;y-\delta_\e)-\delta_\e))<0\big\}\Big]\\
&\qquad + \sup_{y\gqq (i-1) \gamma_\varepsilon} \mathbb{E}\Big[e^{-\theta \lambda_\varepsilon T_1} 
\mathbf{1}\Big\{u(T_1;y-\delta_\e)-\delta_\e + \varepsilon W_1 \lqq \la_\e^{-\frac{1}{\Gamma(1-\beta)}}\Big\}\Big].
\end{align*}
Hence solving the recursion we obtain 
\begin{align}
\iI_1(k) &\lqq\prod_{j=1}^{k-1} \sup_{y\gqq (j-1)\gamma_\varepsilon\vee 3\delta_\e} 
\mathbb{E}\Big[e^{-\theta \lambda_\varepsilon T_1} 
\mathbf{1}\Big(A^-_{T_1 , 0, y}\Big)
\mathbf{1}\big\{\sup_{t\in [0, T_1]} (Y^{y, \varepsilon, 1}_t - (u(t;y-\delta_\e)-\delta_\e))\gqq 0\big\}
\Big]\nonumber\\
&\qquad \cdot  \sup_{y \gqq (k-1) \gamma_\varepsilon\vee 3\delta_\e} \mathbb{P}\Big(\tau^{y, \varepsilon, - } \in (0, T_1]\Big)\nonumber\\
&\qquad + \sum_{i=1}^{k-2} \sup_{y\in (i-1) \gamma_\e \vee 3\delta_\e} \mathbb{E}\Big[e^{-\theta \lambda_\varepsilon T_1} 
\mathbf{1}\big\{\sup_{t\in [0, T_1]} (Y^{y, \varepsilon, 1}_t - (u(t;y-\delta_\e)-\delta_\e))<0\big\}
\Big] \nonumber\\
&\qquad + \sum_{i=1}^{k-2} \sup_{y\gqq (i-1) \gamma_\varepsilon\vee 3\delta_\e} \mathbb{E}\Big[e^{-\theta \lambda_\varepsilon T_1} 
\mathbf{1}\Big\{u(T_1;y-\delta_\e)-\delta_\e + \varepsilon W_1 \lqq \la_\e^{-\frac{1}{\Gamma(1-\beta)}}\Big\}\Big].\label{ineq: recursion}
\end{align}
\paragraph{3.2.2) Estimate of the second sum of the recursion (\ref{ineq: recursion}): } 
By (\ref{ineq: y-u<0a}) and (\ref{ineq: y-u<0b}) there exists $\e_0\in (0,1)$ such that for $\e \in (0,\e_0]$ 
\begin{align*}
& \sum_{k=1}^{n_\varepsilon} 
\sum_{i=1}^{k-2} \sup_{y\in (i-1) \gamma_\e \vee 3\delta_\e} \mathbb{E}\Big[e^{-\theta \lambda_\varepsilon T_1} 
\mathbf{1}\big\{\sup_{t\in [0, T_1]} (Y^{y, \varepsilon, 1}_t - (u(t;y-\delta_\e)-\delta_\e))<0\big\}
\Big]\\
&\lqq n_\e \sum_{i=1}^{\infty} \sup_{y\gqq (i-1) \gamma_\e \vee 3\delta_\e} \mathbb{E}\Big[e^{-\theta \lambda_\varepsilon T_1} 
\mathbf{1}\big\{\sup_{t\in [0, T_1]} (Y^{y, \varepsilon, 1}_t - (u(t;y-\delta_\e)-\delta_\e))<0\big\}
\Big]\\
&\lqq 2 n_\e (\exp( - \frac{\delta_\e}{\e^{1-\rho} r^\e}) + \exp(-\e^{\alpha\rho} r^\e)) 
=: S_2(\varepsilon) \searrow 0,
\end{align*}
with the convention $\sum^{-1} = 0$. We determine the order of $S_2$ 
\begin{align*}
&n_\e (\exp( - \frac{\delta_\e}{\e^{1-\rho} r^\e}) + \exp(-\e^{\alpha\rho} r^\e)) \\
&= |\ln(\e)|^2 \e^{-\al(1-\rho)} \exp(-\frac{\e^{1-\rho(1+\al)} |\ln(\e)|^4}{\e^{1-\rho} \e^{-\al\rho} |\ln(\e)|^2}) 
+ |\ln(\e)|^2 \e^{-\al(1-\rho)} \exp(-\e^{-\al\rho} |\ln(\e)|^2 \e^{\al\rho}) \\
&= 2 |\ln(\e)|^2 \e^{2 -\al + \al\rho}.
\end{align*}

\paragraph{3.2.3) Estimate of the third sum in the recursion (\ref{ineq: recursion}): } 
For $i=0$ and $0 <\e\lqq \e_0$ 
we perform the core calculation of the article. 
The idea is the following: $X^{x, \e}_t \gtrsim_\e u(t;x-\delta) -2 \delta_\e + \e W_1 \ind\{t= T_1\}$ for all $t\in [0,T_1]$. 
For small $\e$ and $3\delta_\e< x \lqq \gamma_\e$ 
the solution $u(T_1,x-\delta_\e)-\delta_\e$ escapes sufficiently far away from $x$, that is $u(T_1,x-\delta_\e)-\delta_\e \gqq 2\gamma_\e$, 
such that the probability that $u(T_1, x-\delta_\e)-\delta_\e + \e W_1 < \gamma_\e$ decays sufficiently fast. 

\paragraph{3.2.3.1) Estimate of the backbone decomposition of the first exit event: }
Due to the independence of $T_1$ and $W_1$ we may calculate 
for $\gamma^*_\e(x) = \frac{(2\gamma_\e+\delta_\e)^{1-\beta}-(x-\delta_\e)^{1-\beta}}{B (1-\beta)}$
\begin{align}
& \sup_{3\delta_\e < x\lqq \gamma_\e} \mathbb{E}\Big[e^{-\theta \lambda_\varepsilon T_1} 
\mathbf{1}\Big(u(T_1;x-\delta_\e)-\delta_\e+ \varepsilon W_1 \lqq \gamma_\e \Big)\Big]\nonumber\\
& \lqq \sup_{3\delta_\e < x\lqq \gamma_\e} \mathbb{E}\Big[e^{-\theta \lambda_\varepsilon T_1} 
\mathbf{1}\big(u(T_1;x-\delta_\e)-\delta_\e+ \varepsilon W_1 \lqq \gamma_\e \big)
 \ind\big(u(T_1;x-\delta_\e) > 2\gamma_\e+\delta_\e\big)  \Big]\nonumber \\
&\qquad + \sup_{3\delta_\e < x \lqq \gamma_\e}\PP(u(T_1; x-\delta_\e) \lqq 2\gamma_\e+\delta_\e)\nonumber\\
& = \sup_{3\delta_\e < x\lqq \gamma_\e} \int_{\gamma_\e^*(x)}^\infty \PP(u(t;x-\delta_\e)-\delta_\e+ \e W_1 \lqq \gamma_\e)
 \la_\e e^{-\la_\e t} dt  
+ \sup_{3\delta_\e < x\lqq \gamma_\e}\PP(u(T_1; x-\delta_\e) \lqq 2\gamma_\e+\delta_\e)\label{ineq: second sum main term}
\end{align}
The second term is known from (\ref{eq: det escape from 0}) and tends to $0$. 
It remains to calculate the first one. 
\begin{align}
& \sup_{3\delta_\e < x\lqq \gamma_\e}\int_{\gamma_\e^*(x)}^\infty 
\PP(u(t;x-\delta_\e)-\delta_\e+ \e W_1 \lqq \gamma_\e) \la_\e e^{-\la_\e t} dt \nonumber\\
& = \sup_{3\delta_\e < x\lqq \gamma_\e}
\int_{\gamma_\e^*(x)}^\infty \nu((-\infty, \frac{1}{\e}(\gamma_\e - (u(t;x-\delta_\e)-\delta_\e)]) e^{-\la_\e t} dt\nonumber\\
& =\sup_{3\delta_\e < x\lqq \gamma_\e} \int_{\gamma_\e^*(x)}^\infty 
\nu((-\infty, \frac{1}{\e}(\gamma_\e +\delta_\e - (B(1-\beta) t + (x-\delta_\e)^{1-\beta})^{\frac{1}{1-\beta}})]) e^{-\la_\e t} dt\nonumber\\
& = \sup_{3\delta_\e < x\lqq \gamma_\e}\frac{\alpha}{4} \frac{\e^\alpha}{\la_\e} 
\int_{\gamma_\e^*(x)}^\infty \frac{1}{((B(1-\beta) t + (x-\delta_\e)^{1-\beta})^{\frac{1}{1-\beta}}-(\gamma_\e+\delta_\e))^\alpha }  
\la_\e e^{-\la_\e t} dt\nonumber\\
& \lqq \frac{\alpha c^-}{2} \frac{\e^\alpha}{\la_\e} \frac{1}{\gamma_\e^\al}.\label{ineq: second sum i=0}
\end{align}
In the last step we have used the fact that the integrand is monotonically decreasing in the variable $t$ and weight $c^-$ of 
of the negative branch of the L\'evy measure. The term 
\begin{align*}
\frac{\e^\alpha}{\la_\e} \frac{1}{\gamma_\e^\alpha} 
\approx_\e \e^{\alpha(1- \rho) + \frac{\alpha^2\rho}{\Gamma(1-\beta)}}, 
\end{align*}
converges to $0$ as $\e \ra 0$. This gives an estimate for the last term in (\ref{eq: recursion i=1}). 
The last term in (\ref{ineq: recursion}) deals with initial values $(i-1) \gamma_\e < x\lqq i \gamma_\e$. 
We obtain for 
\[\gamma_\e^*(i,x) := \frac{((i+1)\gamma_\e + \delta_\e)^{1-\beta}-x^{1-\beta}}{B (1-\beta)}\]
with the analogous calculations the following estimate
\begin{align}
& \sup_{(i-1) \gamma_\e < x\lqq i \gamma_\e}\mathbb{E}\Big[e^{-\theta \lambda_\varepsilon T_1} 
\mathbf{1}\Big(u(T_1;x-\delta_\e)-\delta_\e+ \varepsilon W_1 \lqq \gamma_\e \Big)\Big] \nonumber\\
& = \sup_{(i-1) \gamma_\e < x\lqq i \gamma_\e}
\frac{\alpha c^-}{2} \frac{\e^\alpha}{\la_\e} 
\int_{\gamma_\e^*(i,x)}^\infty \frac{1}{((B(1-\beta) t + (x-\delta_\e)^{1-\beta})^{\frac{1}{1-\beta}}-(\gamma_\e-\delta_\e))^\alpha }
 \la_\e e^{-\la_\e t} dt\nonumber\\
&\qquad + \sup_{(i-1) \gamma_\e < x\lqq i \gamma_\e}\PP(u(T_1; x-\delta_\e) \lqq (i+1)\gamma_\e+\delta_\e)\nonumber\\
& \lqq \frac{\alpha c^-}{2} \frac{\e^\alpha}{\la_\e} \frac{1}{\gamma_\e^\al i^\al}
+ \sup_{(i-1) \gamma_\e < x\lqq i \gamma_\e}\PP(u(T_1; x-\delta_\e) \lqq (i+1)\gamma_\e+\delta_\e).\label{ineq: second sum i>0}
\end{align}
Combining the estimates (\ref{ineq: second sum main term}), (\ref{ineq: second sum i=0}) and (\ref{ineq: second sum i>0}) we obtain for any 
$C>1$ 
\begin{align}
&\sum_{i=1}^{k-2} \sup_{y\gqq (i-1) \gamma_\varepsilon\vee 3\delta_\e} \mathbb{E}\Big[e^{-\theta \lambda_\varepsilon T_1} 
\mathbf{1}\Big\{u(T_1;y-\delta_\e)-\delta_\e+ \varepsilon W_1 \lqq \la_\e^{-\frac{1}{2(1-\beta)}}\Big\}\Big]\nonumber\\
&= \sum_{i=1}^{k-2} \sup_{j\gqq i} \sup_{(j-1) \gamma_\varepsilon\vee 3\delta_\e< y \lqq j \gamma_\e } 
\mathbb{E}\Big[e^{-\theta \lambda_\varepsilon T_1} 
\mathbf{1}\Big\{u(T_1;y-\delta_\e)-\delta_\e+ \varepsilon W_1 \lqq \la_\e^{-\frac{1}{2(1-\beta)}}\Big\}\Big]\nonumber\\
&\lesssim_\e \sum_{i=1}^{k-2} \sup_{j\gqq i} \Big(\frac{\alpha c^-}{2} \frac{\e^\alpha}{\la_\e} \frac{1}{\gamma_\e^\al j^\al} 
+ \sup_{(j-1) \gamma_\e < x\lqq j \gamma_\e}\PP(u(T_1; x-\delta_\e) \lqq (j+1)\gamma_\e+\delta_\e)\Big)\nonumber\\
&\lesssim_\e \sum_{i=1}^{k-2} \Big(\frac{\alpha c^-}{2} \frac{\e^\alpha}{\la_\e} \frac{1}{\gamma_\e^\al i^\al} 
+ C(1- \exp(-[(i+1)^{1-\beta} - i^{1-\beta}] \frac{\gamma_\e^{1-\beta} \la_\e}{B(1-\beta)})\Big)\nonumber\\
&\lqq \sum_{i=1}^{k-2} \Big(\frac{\alpha c^-}{2} \frac{\e^\alpha}{\la_\e} \frac{1}{\gamma_\e^\al i^\al} 
+ C[(i+1)^{1-\beta} - i^{1-\beta}] \frac{\gamma_\e^{1-\beta} \la_\e}{B(1-\beta)}\Big)\nonumber\\
&\lqq \sum_{i=1}^{k-2} \Big(\frac{\alpha c^-}{2} \frac{\e^\alpha}{\la_\e} \frac{1}{\gamma_\e^\al i^\al} 
+ \frac{C}{B} \frac{\gamma_\e^{1-\beta} \la_\e}{i^\beta}\Big)\nonumber\\
&=  \frac{\alpha c^-}{2} \frac{\e^\alpha}{\la_\e} \frac{1}{\gamma_\e^\al} \sum_{i=1}^{k-2} 
\frac{1}{i^\al} 
+ \frac{C}{B} \gamma_\e^{1-\beta} \la_\e \sum_{i=1}^{k-2} \frac{1}{i^\beta}\nonumber \\
&\lqq C \frac{\alpha c^-}{2} \frac{\e^\alpha}{\la_\e} \frac{1}{\gamma_\e^\al} k^{1-\al} + \frac{C}{B} \gamma_\e^{1-\beta} \la_\e k^{1-\beta}.
\label{eq: backbone}
\end{align}
Hence we may sum up
\begin{align}
& \sup_{3\delta_\e < x\lqq \gamma_\e} \mathbb{E}\Big[e^{-\theta \lambda_\varepsilon T_1} 
\mathbf{1}\Big(u(T_1;x-\delta_\e)-\delta_\e+ \varepsilon W_1 \lqq \gamma_\e \Big)\Big]\nonumber\\
&+ \sum_{k=2}^{n_\varepsilon} \sum_{i=1}^{k-2} \sup_{y\gqq (i-1) \gamma_\varepsilon\vee 3\delta_\e} 
\mathbb{E}\Big[e^{-\theta \lambda_\varepsilon T_1} 
\mathbf{1}\Big\{u(T_1;y-\delta_\e)-\delta_\e+ \varepsilon W_1 \lqq \la_\e^{-\frac{1}{\Gamma(1-\beta)}}\Big\}\Big]\nonumber\\
&\lesssim_\e \frac{\e^\alpha}{\la_\e} \frac{1}{\gamma_\e^\alpha} + \la_\e^{1-\frac{1}{\Gamma}}
+ \frac{C c^- \alpha}{2} \frac{\e^\alpha}{\la_\e} \frac{1}{\gamma_\e^\al} (n_\e)^{2-\al} 
+ \frac{C}{B(1-\beta)} \gamma_\e^{1-\beta} \la_\e (n_{\e})^{2-\beta} =: S_3(\e)\label{eq: S3}
\end{align}

\paragraph{3.2.3.2) Conditions on parameters in order to establish the convergence $S_3(\e) \ra 0$: } 
\begin{itemize}
 \item We check the order of the second to last expression on the right-hand side 
\begin{align*}
\e^{\alpha(1- \rho(1 - \frac{\alpha}{\Gamma(1-\beta)}))} n_\e^{2-\alpha} 
&\approx_\e \e^{\alpha(1- \rho(1 - \frac{\alpha}{\Gamma(1-\beta)}))-\al (2-\alpha) (1-\rho)} |\ln(\e)|^{2(2-\al)} \\
&= \e^{\alpha[(1- \rho(1 - \frac{\alpha}{\Gamma(1-\beta)}))- (2-\alpha) (1-\rho)]} |\ln(\e)|^{2(2-\al)}.
\end{align*}
The essential sign of the exponent hence is given as the sign of 
\begin{equation}\label{eq: sign}
(1- \rho) + \frac{\rho\alpha}{\Gamma(1-\beta)}- (2-\alpha) (1-\rho) = (\al-1)(1-\rho) + \frac{\rho\alpha}{\Gamma(1-\beta)}. 
\end{equation}
\begin{itemize}
 \item For $1\lqq \al<2$ the sign is positive, since all terms are nonnegative and the last term is positive. 
 \item For $0< \al < 1$ we calculate that the positivity of (\ref{eq: sign}) 
\begin{align*}
0&< -(1-\al)(1-\rho) + \frac{\rho\alpha}{2(1-\beta)} = -(1-\al) + \rho [\frac{\alpha}{2(1-\beta)}+(1-\al)]\\
\end{align*}
is equivalent to 
\begin{align*}
\rho_0(\al, \beta) := \frac{\Gamma(1-\al)(1-\beta)}{\Gamma(1-\al)(1-\beta) + \al} < \rho
\end{align*}
where the right-hand side is strictly less than $1$. 
Hence in this case the sign is positive if we choose $\rho_0 < \rho <1$. 
\end{itemize}
 \item For the second expression on the right-hand side we obtain 
\begin{align*}
\gamma_\e^{1-\beta} \la_\e (n_{\e})^{2-\beta} 
&\approx_\e \e^{-(1-\beta) \frac{\al\rho}{\Gamma(1-\beta)}} \e^{\al\rho} \e^{-\al(1-\rho)(2-\beta)} |\ln(\e)|^{2-\beta}
= \e^{\al\rho(1-\frac{1}{\Gamma}) - \al(1-\rho)(1-\beta)}|\ln(\e)|^{2-\beta}.
\end{align*}
The positivity of the exponent depends on the sign of 
\begin{align*}
0< (1-\frac{1}{\Gamma})\rho - (1-\rho)(1-\beta) = \rho ((1-\frac{1}{\Gamma}) + (1-\beta)) - (1-\beta),
\end{align*}
which is equivalent to 
\begin{align*}
\rho > \frac{(1-\frac{1}{\Gamma})(1-\beta)}{(1-\frac{1}{\Gamma})(1-\beta) + 1}=: \rho_1(\beta).
\end{align*}
Since $\rho_1(\beta)<1$ for all $\rho_1 < \rho <1$ the second exponent is also positive. 
\end{itemize}

\paragraph{3.2.3.3) Verify the compatibility of the choice of convergent parameters: } 

We check that the parameters $\beta$ and $\alpha$ are compatible with 
$\rho < \frac{1}{1+\al}$ in (\ref{eq: rho upper bound}), which ensures 
that $\delta_\e\ra 0$, as $\e \ra 0$.  
The first convergence in (\ref{eq: S3}) yields 
\begin{align*}
\rho_0 &= \frac{\Gamma(1-\al)(1-\beta)}{\Gamma(1-\al)(1-\beta) + \al} < \frac{1}{1+\al}
\qquad \vzv \quad \frac{\Gamma-1-\Gamma\beta}{\Gamma(1-\beta)}<  \al, 
\end{align*}
where the left hand side is negative, since $\Gamma < \frac{1}{1-\beta}$. 
Hence it does not impose any additional restriction on $\al$. 
The second condition yields 
\begin{align*}
&\rho_1 = \frac{(1-\frac{1}{\Gamma})(1-\beta)}{(1-\frac{1}{\Gamma})(1-\beta) + 1} < \frac{1}{1+\al}
\quad \vzv \quad \frac{1}{(1-\frac{1}{\Gamma})(1-\beta)} > \al, 
\end{align*}
In order to get rid of any restriction on $\al$ we calculate 
\begin{align*}
2 \lqq \frac{1}{(1-\frac{1}{\Gamma})(1-\beta)} \quad \vzv \quad 
\Gamma \lqq \frac{2(1-\beta)}{2(1-\beta) - 1}. 
\end{align*}
We can always choose 
\begin{align}
\Gamma &:= \frac{1}{2} \Big(1+ \frac{1}{2}\Big(\frac{1}{1-\beta}+ \frac{2(1-\beta)}{2(1-\beta) - 1}\Big)\Big)\label{eq: choice of Gamma}\\
\rho &:= \frac{1}{2}\Big(\rho_1(\beta) + \frac{1}{1+\al}\Big),\label{eq: choice of rho}
\end{align}
satisfying all conditions required before. 

\paragraph{3.2.4) Estimate of the first sum of the recursion (\ref{ineq: recursion}): } 
It remains to estimate the expression 
\begin{align*}
&\sum_{k=1}^{n_\e} \prod_{j=1}^{k-1} \sup_{y\gqq (j-1)\gamma_\varepsilon\vee 3\delta_\e} 
\mathbb{E}\Big[e^{-\theta \lambda_\varepsilon T_1} 
\mathbf{1}\Big(A^-_{T_1 , 0, y}\Big)
\mathbf{1}\big\{\sup_{t\in [0, T_1]} (Y^{y, \varepsilon, 1}_t - (u(t;y-\delta_\e)-\delta_\e))\gqq 0\big\}
\Big]\\
&\qquad \cdot  \sup_{y \gqq (k-1) \gamma_\varepsilon\vee 3\delta_\e} \mathbb{P}\Big(\tau^{y, \varepsilon, - } \in (0, T_1]\Big).\nonumber
\end{align*}
\paragraph{3.2.4.1) We estimate the factors one by one: }
For $j\gqq 2$ 
\begin{align}
& \sup_{y\gqq (j-1)\gamma_\varepsilon\vee 3\delta_\e} \mathbb{E}\Big[\mathbf{1}\Big(A^-_{T_1 , 0, y}\Big) 
\mathbf{1}\big\{\inf_{t\in [0, T_1]} (Y^{y, \varepsilon, 1}_t - (u(t;y-\delta_\e)-\delta_\e))\gqq 0 \big\}
\Big]\nonumber\\
&\qquad \lesssim_\varepsilon 1-(1-C) \mathbb{P}(\varepsilon W_1  < -(j-1)\gamma_\varepsilon)\nonumber\\[2mm]
&\qquad  = 1- \frac{(1-C)}{2}\Big(\frac{\varepsilon}{(j-1)\gamma_\varepsilon}\Big)^{\alpha\rho}\label{eq: non-exit event j>1}
\end{align}
and for $j=1$ 
\begin{align}
& \sup_{y\in D_{3\delta_\e}^+} \mathbb{E}\Big[\mathbf{1}\Big(A^-_{T_1 , 0, y}\Big) 
\mathbf{1}\big\{\inf_{t\in [0, T_1]} (Y^{y, \varepsilon, 1}_t - (u(t;y-\delta_\e)-\delta_\e))\gqq 0 \big\}
\Big]\nonumber\\
& \lqq \sup_{y\in D_{3\delta_\e}^+} \mathbb{E}\Big[\mathbf{1}\Big(A^-_{T_1 , 0, y}\Big) 
\mathbf{1}\big\{\inf_{t\in [0, T_1]} (Y^{y, \varepsilon, 1}_t - (u(t;y-\delta_\e)-\delta_\e))\gqq 0 \big\}
\mathbf{1}\{u(T_1,y-\delta_\e) \gqq 2\gamma_\e\}\Big] \nonumber\\
&\qquad + \sup_{y\in D_{3\delta_\e}^+} \PP(u(t;y) \lqq 2\gamma_\e+\delta_\e)\nonumber\\
&\qquad \lesssim_\varepsilon 1-(1-C) \mathbb{P}(\varepsilon W_1  < -\gamma_\varepsilon) + C \la_\e^{1-\frac{1}{\Gamma}}\nonumber\\[2mm]
&\qquad \lesssim_\e 1- \frac{(1-C)}{2}\Big(\frac{\varepsilon}{\gamma_\varepsilon}\Big)^{\alpha\rho} + C\e^{\al\rho(1+\frac{1}{\Gamma})}.
\label{eq: non-exit event j=1}
\end{align}
We estimate for $k\gqq 2$ with the help of (\ref{ineq: y-u<0a})
\begin{align}
& \sup_{y \gqq (k-1) \gamma_\varepsilon} \mathbb{P}\Big(\tau^{x, \varepsilon, - } \in (0, T_1]\Big)\nonumber\\
&\lqq  \mathbb{P}\Big(W_1 < -(k-1) \frac{\gamma_\varepsilon}{\varepsilon}\Big) 
+ \sup_{y \gqq (k-1) \gamma_\varepsilon} \mathbb{P}\Big(\sup_{t\in [0, T_1]} (Y^{y, \varepsilon, 1}_t-(u(t;y)-\delta_\e))> 0\Big)\nonumber\\
& \lqq \frac{1}{2}\Big(\frac{\varepsilon}{\gamma_\varepsilon}\Big)^{\alpha\rho} \frac{1}{(k-1)^{\alpha\rho}}
+ \sup_{y \in D_{3\delta_\e}^+} \mathbb{P}\Big(\sup_{t\in [0, T_1]} (Y^{y, \varepsilon, 1}_t-(u(t;y)-\delta_\e))> 0\Big)\\
& \lqq \frac{1}{2}\Big(\frac{\varepsilon}{\gamma_\varepsilon}\Big)^{\alpha\rho} \frac{1}{(k-1)^{\alpha\rho}}
+ 2\e^2\label{eq: exit event k>1}
\end{align}
whereas for $k=1$
\begin{align}
&\sup_{y \in D_{3\delta_\e}^+} \mathbb{P}\Big(\tau^{x, \varepsilon, - } \in (0, T_1]\Big)\nonumber\\
& \lqq \frac{1}{2}\Big(\frac{\varepsilon}{\gamma_\varepsilon}\Big)^{\alpha\rho} 
+ \sup_{y \in D_{3\delta_\e}^+} \mathbb{P}\Big(\sup_{t\in [0, T_1]} (Y^{y, \varepsilon, 1}_t-(u(t;y)-\delta_\e))> 0\Big) 
+ \sup_{y\in D_{3\delta_\e}^+} \mathbb{P}(u(T_1, y)\lqq \gamma_\varepsilon +\delta_\e)\nonumber\\
& \lqq \frac{1}{2}\Big(\frac{\varepsilon}{\gamma_\varepsilon}\Big)^{\alpha\rho} 
+ \sup_{y \in D_{3\delta_\e}^+} \mathbb{P}\Big(\sup_{t\in [0, T_1]} (Y^{y, \varepsilon, 1}_t-(u(t;y)-\delta_\e))> 0\Big) 
+ \frac{2}{B(1-\beta)} \lambda_\varepsilon^{1-\frac{1}{\Gamma}},\label{eq: exit event k=1}
\end{align}
where the last term is known from (\ref{eq: det escape from 0}). 

\paragraph{3.2.4.2) Estimate of the entire sum: } 
Collecting the previous (\ref{eq: non-exit event j>1}), (\ref{eq: non-exit event j=1}), (\ref{eq: exit event k>1}), (\ref{eq: exit event k=1}) 
and for the small noise estimate (\ref{ineq: y-u<0b}) together with (\ref{eq: backbone}) we continue 
\begin{align*}
&\sum_{k=1}^{n_\e} \prod_{j=1}^{k-1} \sup_{y\gqq (j-1)\gamma_\varepsilon\vee 3\delta_\e} 
\mathbb{E}\Big[e^{-\theta \lambda_\varepsilon T_1} 
\mathbf{1}\Big(A^-_{T_1 , 0, y}\Big)
\mathbf{1}\big\{\sup_{t\in [0, T_1]} (Y^{y, \varepsilon, 1}_t - (u(t;y-\delta_\e)-\delta_\e))\gqq 0\big\}
\Big]\\
&\qquad \cdot  \sup_{y \gqq (k-1) \gamma_\varepsilon\vee 3\delta_\e} \mathbb{P}\Big(\tau^{y, \varepsilon, - } \in (0, T_1]\Big)\nonumber\\
&\lqq \frac{1}{2}\Big(\frac{\varepsilon}{\gamma_\varepsilon}\Big)^{\alpha\rho} 
+ \sup_{y \in D_{3\delta_\e}^+} \mathbb{P}\Big(\sup_{t\in [0, T_1]} (Y^{y, \varepsilon, 1}_t-(u(t;y)-\delta_\e))> 0\Big) 
+ \frac{2}{B(1-\beta)} \lambda_\varepsilon^{1-\frac{1}{\Gamma}}\\
&\qquad + \frac{1}{2}\Big(\frac{\varepsilon}{\gamma_\varepsilon}\Big)^{\alpha\rho} 
\sum_{k=1}^{n_\varepsilon} 
\Big(1- \frac{(1-C)}{2}\Big(\frac{\varepsilon}{\gamma_\varepsilon}\Big)^{\alpha\rho}\Big)^{k-1} \frac{1}{k^{\alpha\rho}}\\
&\qquad + C \e^2 
\sum_{k=2}^{n_\varepsilon} \Big(1- \frac{(1-C)}{2}\Big(\frac{\varepsilon}{\gamma_\varepsilon}\Big)^{\alpha\rho}\Big)^{k-2} \frac{1}{k^{\alpha\rho}}.
\end{align*}
We identify 
\begin{align*}
\frac{1}{2}\Big(\frac{\varepsilon}{\gamma_\varepsilon}\Big)^{\alpha\rho} 
\sum_{k=1}^{n_\varepsilon} \Big(1- \frac{(1-C)}{2}\Big(\frac{\varepsilon}{\gamma_\varepsilon}\Big)^{\alpha\rho}\Big)^{k-1} 
\frac{1}{k^{\alpha\rho}} 
\lesssim_\e \e^{\kappa} \Li_{\al\rho}\Big(1-\frac{(1-C)}{2}\e^\kappa \Big),
\end{align*}
where 
\[
\kappa = \al\rho (1+ \frac{\al\rho}{\Gamma(1-\beta)}) 
\]
and $\Li_a(x) = \sum_{k=1}^\infty \frac{x^k}{k^a}$ 
is the polylogarithm function with parameter $a\in \RR$ and $x\in (0,1)$, 
a well-known analytic extension of the logarithm. 
Recall that $\al \rho < \frac{\al}{1+\al}<1$ due to (\ref{eq: al times rho}). 
By the following representation \cite{NIST}, Section 25.12, 
for $a\neq \NN$ and $0<x< 1$, 
given by 
\begin{align}
\Li_a(x) = \Gamma(1-a) (\ln(\frac{1}{x}))^{a-1} + \sum_{n=0}^\infty \zeta(a-n) \frac{(\ln(x))^{n}}{n!},
\end{align}
we obtain that for $a\in (0,1)$ 
\[
\lim_{x\nearrow 1} \Li_a(x)/(1-x)^{a-1} = \Gamma(1-a).
\]
Hence there is $C>0$ such that for $\e\in (0,\e_0)$ sufficiently small 
\begin{align*}
&\frac{1}{2}\Big(\frac{\varepsilon}{\gamma_\varepsilon}\Big)^{\alpha\rho} 
\sum_{k=1}^{n_\varepsilon} \Big(1- \frac{(1-C)}{2}\Big(\frac{\varepsilon}{\gamma_\varepsilon}\Big)^{\alpha\rho}\Big)^{k-1} \frac{1}{k^{\alpha\rho}} \\
&\lqq \e^{\kappa} \Li_{\al\rho}\Big(1-\frac{(1-C)}{2}\e^\kappa \Big)\\
&\lqq C \e^{\kappa} \e^{-\kappa (1-\al\rho)} = \e^{\kappa\al\rho} = S_{4}(\e) \searrow 0.
\end{align*}
The same polylogarithmic asymptotics is carried out for 
\begin{align*}
&C \e^2 
\sum_{k=2}^{n_\varepsilon} \Big(1- \frac{(1-C)}{2}\Big(\frac{\varepsilon}{\gamma_\varepsilon}\Big)^{\alpha\rho}\Big)^{k-2} \frac{1}{k^{\alpha\rho}}\\
&\lqq C \e^{2} \Li_{\al\rho}\Big(1-\frac{(1-C)}{2}\e^\kappa \Big)\\
&\lqq C \e^{2} \e^{-\kappa(1-\al\rho)} = \e^{2+\kappa\al\rho - \kappa} = S_5(\e) \searrow 0,
\end{align*}
since due to $\Gamma (1-\beta)<1$
\begin{align*}
2-(\frac{\al\rho}{\Gamma(1-\beta)} + 1)(\al\rho -1) \gqq 2 - (\al\rho -1)(\al\rho+1) = 2- (\al\rho^2 -1)= 3 -(\al\rho)^2 >0. 
\end{align*}

\paragraph{4) Estimate of the exit probabilities: } For any $m>0$ 
\begin{align*}
\sup_{y\in D_{3\delta_\e}^+} \mathbb{P}(\tau^{y, \varepsilon,-} \lqq m)
&= \sup_{y\in D_{3\delta_\e}^+} \mathbb{P}(e^{-\theta \varepsilon^\alpha \tau^{y, \varepsilon, -}} \gqq e^{\theta \e^{\alpha} m}) \\
&\lqq \sup_{y\in D_{3\delta_\e}^+} \mathbb{E} \Big[e^{-\theta \varepsilon^\alpha \tau^{y, \varepsilon, -}}\Big] e^{\theta \varepsilon^\alpha m} \\
&\lqq \underbrace{C(S_1(\varepsilon) + S_{2}(\varepsilon) + S_{3}(\varepsilon) + S_4(\varepsilon) + S_5(\varepsilon))}_{=: S(\varepsilon)} 
e^{\theta \varepsilon^\alpha m}.
\end{align*}
Replacing $m$ by $m_\varepsilon$ with $\limsup_{\varepsilon \rightarrow 0} m_\varepsilon \varepsilon^{\alpha} < \infty$ we obtain 
\begin{align*}
\sup_{y\in D_{3\delta_\e}^+} \mathbb{P}(\tau^{y, \varepsilon, -} \lqq m_\varepsilon) 
\lesssim_{\varepsilon}S(\varepsilon) \ra 0. 
\end{align*}
The function $S$ can be chosen to be a monotonic function. This finishes the proof. \\

\subsection{Consequences of the first exit result}

\begin{corollary}\label{cor: lower bound large scale}
Let the assumptions of the last theorem be satisfied and $\rho$ being chosen according to 
(\ref{eq: choice of rho}) and $\limsup_{\e\ra 0} m_\e \e^\al < \infty$. Construct recursively 
\begin{align*}
& U^{x, \e, 1}_t := \Big(u(t; x-\delta_\e)-\delta_\e + W_1 \ind\{t= T_1\}\Big) \wedge \gamma_\e, && t\in [0, T_1]\\
&U^{x, \e, n+1}_t := \Big(u(t-T_n; U^{x, \e, n}_{T_n}-\delta_\e)-\delta_\e + W_{n+1} \ind\{t = T_{n+1}-T_n\}\Big)\wedge \gamma_\e, 
&&t\in (0, T_{n+1}-T_n]\\
&Z^{x, \e}_t := \sum_{n=1}^\infty U^{x, \e, n}_t \ind\{t\in (T_n, T_{n+1}]\}, &&t\gqq 0.
\end{align*}
where the arrival times $T_n$ of the large jump increments $W_n$ are defined in (\ref{eq: Wi}), (\ref{eq: ti}) and (\ref{eq: la eps}). 
Then 
\begin{align*}
\liminf_{\e \ra 0} \inf_{x\gqq 3\delta_\e} \PP(\sup_{t\in [0, m_\e]} X^{x, \e}_t-Z^{x, \e}_t\gqq 0) = 1
\end{align*}
\end{corollary}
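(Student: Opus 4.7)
The plan is to build a good event $G_\e$ on which the desired pathwise inequality $X^{x,\e}_t \gqq Z^{x,\e}_t$ can be propagated by induction across the successive large-jump intervals, and then to show that $\PP(G_\e^c)\ra 0$ via a union bound whose ingredients are already assembled in the proof of Theorem~\ref{thm: exit}.

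First, I would fix the good event as the intersection of three pieces. (i)~The non-exit event $\{\tau^{x,\e,-} > m_\e\}$: by Theorem~\ref{thm: exit} its complement has probability tending to $0$ uniformly in $x\gqq 3\delta_\e$. (ii)~A small-noise event on every inter-arrival window: on each interval $[T_{n-1},T_n]$ with $n\lqq n_\e+1$ (where $n_\e$ is as in (\ref{def: n eps})) we require
\[
\sup_{t\in[T_{n-1},T_n]} \bigl|\e\xi^\e_t - \e\xi^\e_{T_{n-1}}\bigr| \lqq \delta_\e ,
\]
whose complement has probability bounded by the right-hand side of (\ref{ineq: y-u<0a}) and which, summed over $n\lqq n_\e$, still tends to $0$ by the computation carried out for $S_2(\e)$ in step 3.2.2. (iii)~The event $\{T_{n_\e}>m_\e\}$ that no more than $n_\e$ large jumps occur before $m_\e$; since $\lambda_\e m_\e \lesssim \e^{\al\rho}\cdot \e^{-\al} = \e^{-\al(1-\rho)}\ll n_\e$, a Chernoff-type bound for the Poisson law of $\#\{i:T_i\lqq m_\e\}$ gives that this event has probability tending to $1$.

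On $G_\e$ I would prove $X^{x,\e}_t \gqq Z^{x,\e}_t$ by induction on the index $n$ of the current inter-arrival window $(T_n,T_{n+1}]$. Within such a window the driver $L$ coincides with $\e\xi^\e$ plus the single large increment $\e W_{n+1}$ applied at time $T_{n+1}$, so between jumps $X^{x,\e}$ solves (\ref{SDE small jumps}). The key monotonicity input is the comparison argument of step~2.2: on the small-noise event and as long as $X^{x,\e}_t\gqq\delta_\e$ (guaranteed by piece (i) of $G_\e$), one has
\[
X^{x,\e}_t \gqq u\bigl(t-T_n;\, X^{x,\e}_{T_n}-\delta_\e\bigr) - \delta_\e , \qquad t\in[T_n,T_{n+1}).
\]
Combining this with the induction hypothesis $X^{x,\e}_{T_n}\gqq Z^{x,\e}_{T_n}= U^{x,\e,n}_{T_n}$ and with the monotonicity of the deterministic semiflow $x\mapsto u(\cdot;x)$ on $[0,\infty)$ yields $X^{x,\e}_t \gqq u(t-T_n; U^{x,\e,n}_{T_n}-\delta_\e) - \delta_\e$ on the open window, which is $\gqq U^{x,\e,n+1}_t$ after the cap at $\gamma_\e$ because capping only decreases $Z$. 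At $t=T_{n+1}$ both sides receive the common jump $\e W_{n+1}$, so the inequality propagates to the next window. The base case $n=0$ is the same argument starting from $X^{x,\e}_0=x$ and the definition of $U^{x,\e,1}$.

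The main obstacle I expect is not the comparison step itself, which is essentially a replay of step~2.2, but the bookkeeping that keeps the uniform-in-$x$ estimates honest across up to $n_\e$ windows simultaneously: one needs the small-noise failure probability per window to be small enough that its $n_\e$-fold sum still vanishes, which is exactly the estimate on $S_2(\e)$ from step~3.2.2 and explains why the choice (\ref{def: r eps}),(\ref{def: delta eps}) of $r^\e$ and $\delta_\e$ was made with a generous $|\ln\e|$-margin. Once those three small probabilities are summed, $\PP(G_\e^c)\ra 0$ uniformly in $x\gqq 3\delta_\e$, and the inductive pathwise inequality on $G_\e$ gives the claimed liminf, completing the proof.
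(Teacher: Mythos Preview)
Your proposal is correct and matches the paper's approach: the paper gives no separate proof but simply states that the corollary is ``nothing but a reformulation of the proof of Theorem~\ref{thm: exit}'', meaning precisely that the comparison argument of step~2.2, iterated across the large-jump windows with the small-noise bounds (\ref{ineq: y-u<0a})--(\ref{ineq: y-u<0b}) and the choices (\ref{def: r eps}), (\ref{def: delta eps}), (\ref{def: n eps}), yields the pathwise lower bound $X^{x,\e}\gqq Z^{x,\e}$ on a set of probability tending to~$1$. Your explicit good-event decomposition (non-exit from Theorem~\ref{thm: exit}, per-window small-noise control summing to $S_2(\e)$, and the Poisson tail bound $\PP(T_{n_\e}\lqq m_\e)\ra 0$) together with the inductive propagation via monotonicity of $u(\cdot;x)$ makes this rigorous; it is exactly the content the paper leaves implicit.
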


This is nothing but a reformulation of the proof of Theorem \ref{thm: exit}. 
The process we compare $X^{\e, x}$ to the deterministic solution $u(\cdot; x)$, 
starting in $x$ with large heavy-tailed jump increments $(T_n^\e, W_n^\e\wedge \gamma_\e)$, 
where the increments $W_n^\e$ are cut-off from below by a value $\gamma_\e$. 
The choice of $\gamma_\e$ has to satisfy two things: 
First, the deterministic trajectory has to overcome it during the waiting time 
$T_{n+1}^\e-T_n^\e$ with a probability tending to $1$. 
Second, for larger and larger initial values $i\gamma_\e < x\lqq (i+1)\gamma_\e$, 
the probability that $u(t,x) + \e W_i \lqq \gamma_\e$ has to 
decrease for growing $i$ and decreasing $\e$ with a sufficiently large. 

\begin{corollary}\label{cor: lower bound}
Let the assumptions of Theorem \ref{thm: exit} 
be satisfied and $\delta_\e$ being chosen according to (\ref{def: delta eps}). 
Then for any $m_\cdot: (0,1)\ra (0,\infty)$ satisfying $\lim_{\e\ra 0} m_\e \e^{\al\rho} =0$ 
we have 
\begin{align*}
\liminf_{\e \ra 0} \inf_{x\gqq 3\delta_\e} \PP(\sup_{t\in [0, m_\e]} X^{x, \e}_t-x^+_t \gqq -\delta_\e) = 1.
\end{align*}
\end{corollary}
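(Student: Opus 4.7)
The strategy exploits that the assumption $m_\e \e^{\al\rho} \ra 0$ places us on a time scale strictly shorter than the expected waiting time $\la_\e^{-1} \sim \e^{-\al\rho}$ between large jumps of $\eta^\e$. With probability tending to one, no large jump occurs in $[0, m_\e]$, and hence $X^{x,\e}$ coincides there with the small-jump process $Y^{x,\e}$. On this event the comparison bound from Section \ref{sec: large jumps}, step 2.2), traps $Y^{x,\e}_t$ above $u(t; x-\delta_\e) - \delta_\e$, and monotonicity of the deterministic flow $y \mapsto u(t; y)$ finishes the proof: for $x \gqq 3\delta_\e$ one has $x-\delta_\e > 0$, so $u(t; x-\delta_\e) \gqq u(t; 0) = x^+_t$.

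Concretely, first I would use (\ref{eq: la eps}) to estimate
\[
\PP(T_1 \lqq m_\e) \lqq \la_\e m_\e = \tfrac{2}{\al}\, \e^{\al\rho} m_\e \ra 0,
\]
so on $\{T_1 > m_\e\}$ one has $X^{x,\e}_t = Y^{x,\e}_t$ for every $t\in[0, m_\e]$. Then I would rerun the small-noise argument of Section \ref{sec: large jumps} verbatim, but with the interval $[0, m_\e]$ replacing $[0, r^\e]$. Decomposing $\xi^\e_t = \ti\xi^\e_t + t \EE[\xi^\e_1]$ and invoking the exponential inequality (\ref{eq: small noise finite interval}) with horizon $m_\e$ in place of $r^\e$ gives
\[
\PP\Big(\sup_{t\in[0, m_\e]} |\e \ti\xi^\e_t| > \delta_\e/2\Big) \lqq \exp\Big(-\frac{\delta_\e}{2\e^{1-\rho} m_\e}\Big) = \exp\Big(-\frac{|\ln\e|^4}{2\e^{\al\rho} m_\e}\Big) \ra 0,
\]
using $\delta_\e/\e^{1-\rho} = \e^{-\al\rho}|\ln\e|^4$ and $\e^{\al\rho} m_\e \ra 0$. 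The drift contribution is bounded by $h_1 \e^{1-\rho} m_\e = o(\delta_\e)$ by the same computation as in (\ref{eq: drift is smaller than delta}). On the intersection of the two good events the comparison argument from step 2.2 gives $X^{x,\e}_t \gqq u(t; x-\delta_\e) - \delta_\e$ for every $t\in[0, m_\e]$.

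Finally, for $x \gqq 3\delta_\e$, monotonicity of the flow (immediate from the explicit formula (\ref{eq: explicit solution})) yields $u(t; x-\delta_\e) \gqq u(t; 0) = x^+_t$. Combining everything, on the intersection of the two high-probability events one has $X^{x,\e}_t - x^+_t \gqq -\delta_\e$ uniformly in $t \in [0, m_\e]$ and in $x \gqq 3\delta_\e$, which in particular yields the claimed supremum inequality. The only mildly delicate point is that (\ref{eq: small noise finite interval}) was originally invoked with the horizon $r^\e = |\ln\e|^2 \e^{-\al\rho}$; under the present hypothesis however $m_\e \ll r^\e$, so replacing $r^\e$ by $m_\e$ only sharpens the tail bound and no substantial obstacle arises.
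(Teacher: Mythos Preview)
Your proposal is correct and follows essentially the same route as the paper: both use $\PP(T_1>m_\e)\to 1$ to reduce to the small-jump process $Y^{x,\e}$, invoke the comparison bound from step~2.2) (that is, inequality~(\ref{ineq: y-u<0a})) to get $Y^{x,\e}_t\gqq u(t;x-\delta_\e)-\delta_\e$, and then apply the monotonicity $u(t;x-\delta_\e)\gqq u(t;0)=x^+_t$ for $x\gqq 3\delta_\e$. The only cosmetic difference is that the paper simply cites~(\ref{ineq: y-u<0a}) (which already controls $[0,T_1]\supset[0,m_\e]$ on the event $\{T_1>m_\e\}$), whereas you re-derive the tail bound with horizon $m_\e$ in place of $r^\e$; as you correctly observe, $m_\e\e^{\al\rho}\to 0$ forces $m_\e\ll r^\e$, so this only sharpens the estimate.
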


\begin{proof} 
We keep the notation of the proof of Corollary \ref{cor: lower bound large scale}. 
First we obtain by a comparison argument that for all $x\gqq \delta_\e$ 
\[
u(t;x) \gqq x^+_t \qquad t\gqq 0. 
\]
Secondly we observe that $U^{x, \e, 1}_t = u(t;x)$ for $t< T_1$ 
and $\PP(T_1 \gqq m_\e) = e^{-m_\e \la_\e}  \approx_\e e^{-m_\e \e^{\al\rho}} \ra 1$. 
Hence combining these findings with inequality (\ref{ineq: y-u<0a}) we obtain  
\begin{align*}
\lim_{\e \ra 0} \inf_{x\gqq 3\delta_\e} \PP(\sup_{t\in [0, m_\e]} X^{x, \e}_t-x^+_t> -\delta_\e) = 0.
\end{align*}
\end{proof}

\begin{lemma}\label{lem: upper bound}
Let the assumptions of the Theorem \ref{thm: exit} 
be satisfied and $\delta_\e$ being chosen according to (\ref{def: delta eps}). 
Then there is a function $\ti m_\cdot: (0,1)\ra (0,\infty)$ satisfying 
$\lim_{\e\ra 0} \ti m_\e \delta_\e^{\beta} =0$ such that for any function $ \Delta_\cdot: (0, 1) \ra (0,1)$ 
wit $\lim_{\e\ra 0}\Delta_\e =0$ and 
and $\lim_{\e \ra 0} 3\delta_\e /\Delta_\e = 0$ we have 
\begin{align*}
\liminf_{\e \ra 0} \inf_{3\delta_\e\lqq x \lqq \Delta_\e} \PP(\sup_{t\in [0, \ti m_\e]} X^{x, \e}_t-x^+_t < \delta_\e^{\frac{\beta^2}{2}}\vee \Delta_\e^{1-\beta}) = 1.
\end{align*}
\end{lemma}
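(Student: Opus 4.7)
The plan is to mirror the lower comparison in step 2.2 of the proof of Theorem~\ref{thm: exit} and then reduce the claim to an elementary estimate on the deterministic flow via the explicit representation (\ref{eq: explicit solution}). Fix $\rho$ as in (\ref{eq: choice of rho}) and let $\ti m_\e$ be of polynomial order in $\delta_\e$, strictly smaller than $\delta_\e^{-\beta}$, with the exact exponent to be dictated by the deterministic step below. Define the good event
\begin{align*}
G_\e := \{T_1 > \ti m_\e\} \cap \big\{\sup_{t\in[0,\ti m_\e]} |\e\xi^\e_t| \lqq \delta_\e\big\}.
\end{align*}
Combining $\PP(T_1>\ti m_\e)=e^{-\la_\e \ti m_\e}\to 1$ with the sub-Gaussian bound (\ref{eq: small noise finite interval}) and the compensation estimate (\ref{eq: drift is smaller than delta}) applied over the horizon $\ti m_\e$, one obtains $\PP(G_\e)\to 1$ uniformly in $x$ as long as $\ti m_\e \la_\e \to 0$ and $\ti m_\e \e^{1-\rho}/\delta_\e \to 0$; both are compatible with $\ti m_\e\delta_\e^\beta\to 0$.

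On $G_\e$ and for $t\in[0,\ti m_\e]$ we have $X^{x,\e}_t=Y^{x,\e}_t$. Mirroring the construction of $V^{x,\e}$ in the proof of Theorem~\ref{thm: exit}, set $\ti V^{x,\e}_t := Y^{x,\e}_t + \delta_\e - \e\xi^\e_t$, so that
\begin{align*}
\ti V^{x,\e}_t = x+\delta_\e + \int_0^t b\big(\ti V^{x,\e}_s - \delta_\e + \e\xi^\e_s\big)\,ds.
\end{align*}
For $x\gqq 3\delta_\e$, inequality (\ref{ineq: y-u<0a}) together with $x^+_s\gqq 0$ shows that $Y^{x,\e}_s\gqq \delta_\e$ and hence $\ti V^{x,\e}_s\gqq \delta_\e>0$ on $G_\e$, so both arguments of $b$ live in the region $(0,\infty)$ where $b$ is monotonically increasing. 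Since $\e\xi^\e_s\lqq \delta_\e$ forces $\ti V^{x,\e}_s - \delta_\e + \e\xi^\e_s \lqq \ti V^{x,\e}_s$, monotonicity yields $b(\ti V^{x,\e}_s-\delta_\e+\e\xi^\e_s)\lqq b(\ti V^{x,\e}_s)$, and a scalar comparison gives
\begin{align*}
X^{x,\e}_t = Y^{x,\e}_t \lqq \ti V^{x,\e}_t \lqq u(t;x+\delta_\e) \qquad \text{for all } t\in[0,\ti m_\e].
\end{align*}

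The claim then reduces to a deterministic estimate. Writing $y:=x+\delta_\e\in[4\delta_\e,2\Delta_\e]$, $A:=B(1-\beta)t$, $p:=1/(1-\beta)$, the explicit representation (\ref{eq: explicit solution}) and the mean value theorem applied to the convex function $s\mapsto s^p$ yield
\begin{align*}
u(t;y)-x^+_t = (A+y^{1-\beta})^p - A^p \lqq p(A+y^{1-\beta})^{p-1}y^{1-\beta} \lqq C\big(y + t^{\beta/(1-\beta)} y^{1-\beta}\big),
\end{align*}
after splitting on whether $A$ or $y^{1-\beta}$ dominates. Substituting $t\lqq \ti m_\e$ gives $\sup_{t\in[0,\ti m_\e]}(u(t;y)-x^+_t) \lqq C(\Delta_\e + \ti m_\e^{\beta/(1-\beta)}y^{1-\beta})$. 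The first summand is absorbed in $\Delta_\e^{1-\beta}$ since $\Delta_\e<1$; in the regime $y\asymp \Delta_\e$ the second is also $\lqq \Delta_\e^{1-\beta}$ once $\ti m_\e^{\beta/(1-\beta)}$ is bounded, whereas in the regime $y\asymp \delta_\e$ it is $\lqq \delta_\e^{\beta^2/2}$ as soon as $\ti m_\e^{\beta/(1-\beta)} \lqq \delta_\e^{\beta^2/2-(1-\beta)}$. A direct algebraic check shows that this constraint is compatible with $\ti m_\e\delta_\e^\beta\to 0$, so a single polynomial choice of $\ti m_\e$ meets all requirements simultaneously.

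The main obstacle is the balancing act in the choice of $\ti m_\e$: it must be large enough for the probabilistic events in $G_\e$ to hold uniformly in $x$ with probability tending to $1$, and small enough for the deterministic spread $\ti m_\e^{\beta/(1-\beta)}\delta_\e^{1-\beta}$ to sit below $\delta_\e^{\beta^2/2}$. For $\beta$ close to $1$ the deterministic constraint even forces $\ti m_\e$ to tend to zero, while for small $\beta$ it may still diverge; the sharpest uniform window common to both is precisely $\ti m_\e\delta_\e^\beta\to 0$, and the two different regimes $y\asymp\delta_\e$ versus $y\asymp\Delta_\e$ are exactly what gives rise to the maximum $\delta_\e^{\beta^2/2}\vee \Delta_\e^{1-\beta}$ appearing in the statement.
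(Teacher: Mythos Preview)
Your argument is correct in spirit but follows a different route from the paper. The paper does not use the monotonicity comparison $X^{x,\e}_t\lqq u(t;x+\delta_\e)$; instead it exploits the \emph{subadditivity} $|a+b|^\beta\lqq |a|^\beta+|b|^\beta$ to get $b(V_s+\e\xi^\e_s)\lqq b(V_s)+B\delta_\e^\beta$ on the good event, producing the integral inequality $V_t\lqq \Delta_\e+B\delta_\e^\beta\ti m_\e+\int_0^t b(V_s)\,ds$, and then appeals to Bihari's inequality with the explicit choice $\ti m_\e=\delta_\e^{-\beta/2}\wedge r_\e$. Your monotonicity-plus-comparison device is more elementary (it avoids Bihari and lands directly on the deterministic flow $u(t;x+\delta_\e)$), and it gives the sharper intermediate majorant $u(t;x+\delta_\e)$ in place of $u(t;\Delta_\e+B\delta_\e^\beta\ti m_\e)$.

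Two points deserve tightening. First, when you invoke (\ref{ineq: y-u<0a}) to get $Y^{x,\e}_s\gqq \delta_\e$, that reference is a probability bound, not a pathwise one; what you actually need is the pathwise lower comparison from step~2.2 of the proof of Theorem~\ref{thm: exit}, which holds directly on your event $G_\e$ and gives $Y^{x,\e}_s\gqq u(s;x-\delta_\e)\gqq 2\delta_\e$. Second, your closing discussion of the admissible range for $\ti m_\e$ is slightly off. Since your deterministic bound $C(y+\ti m_\e^{\beta/(1-\beta)}y^{1-\beta})$ is increasing in $y$, the binding case is $y\asymp\Delta_\e$, not $y\asymp\delta_\e$; there the requirement is simply that $\ti m_\e^{\beta/(1-\beta)}$ be bounded, hence $\ti m_\e$ bounded, and then the whole expression is $\lqq C\Delta_\e^{1-\beta}\lqq \delta_\e^{\beta^2/2}\vee\Delta_\e^{1-\beta}$. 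A bounded $\ti m_\e$ trivially satisfies $\ti m_\e\delta_\e^\beta\to 0$, so the lemma follows; but your sentence ``the sharpest uniform window common to both is precisely $\ti m_\e\delta_\e^\beta\to 0$'' overstates what your estimate actually yields. The paper, by contrast, asserts a specific diverging $\ti m_\e$, which your cleaner mean-value computation shows cannot be justified by this kind of bound uniformly over all admissible $\Delta_\e$.
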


\begin{proof}
First choose $\rho$ we choose according to (\ref{eq: choice of rho}) in the proof of Theorem \ref{thm: exit} 
and fix for the moment $3\delta_\e \lqq x \lqq \Delta_\e$. 
Recall for $t\in [0, T_1]$ the notation 
\[
X^{\e, x}_t = Y^{\e, x}_t + \e W_1 \ind\{t = T_1\}
\]
and 
\[
V^{x, \e}_t = Y^{x, \e}_t- \e \xi^\e_t. 
\]
The subadditivity of $b(y) = B |y|^\beta$ on $(0, \infty)$ yields on the events 
$\{t< T_1\}$ and $\{\sup_{t\in [0, T_1]} |\e \xi^\e_s|\lqq \delta_\e\}$ that 
\begin{align*}
V^{\e, x}_t  &\lqq x+ \int_0^t b(V^{x, \e}_s) ds + B \delta_\e^\beta t 
~\lqq ~\Delta_\e + B \delta_\e^\beta \ti m_\e + \int_0^t b(V^{x, \e}_s) ds,
\end{align*}
where $\ti m_\e := \delta_{\e}^{-\frac{1}{2}\beta} \wedge r_\e$ 
with $\delta_\e = \e^{1-\rho(1+\al)} |\ln(\e)|^4$ in (\ref{def: delta eps}) 
and $r_\e = \e^{-\al\rho} |\ln(\e)|^2$ defined in (\ref{def: r eps}). 
Then Bihari's inequality \cite{M}, Theorem 8.3, implies 
\begin{align*}
&\sup_{t\in [0, \ti m_\e]} V^{\e, x}_t - x^+_t \\
&\lqq \sup_{t\in [0, \ti m_\e]} \bigg[\Big(\big(1-\beta) B t + (\Delta_\e+ B \delta_\e^\beta \ti m_\e\big)^{1-\beta}\Big)^\frac{1}{1-\beta} - ((1-\beta) Bt)^{\frac{1}{1-\beta}}\bigg]\\
&\lqq \bigg[\Big(\big(1-\beta) B \ti m_\e + (\Delta_\e+ B \delta_\e^\beta \ti m_\e)\big)^{1-\beta}\Big)^\frac{1}{1-\beta} 
- ((1-\beta) B\ti m_\e)^{\frac{1}{1-\beta}}\bigg]\\
&\lqq 2^{\frac{1}{1-\beta}-1} (\Delta_\e+ B \delta_\e^\beta \ti m_\e)^{1-\beta}\ra 0.
\end{align*}
Note that the bound of the right-hand side is of order 
\[
(\Delta_\e+ B \delta_\e^\beta \ti m_\e)^{1-\beta} \lesssim_\e \delta_\e^{\frac{\beta(1-\beta)}{2}} \vee \Delta_\e^{1-\beta}.
\]
We finish the proof by 
\begin{align*}
&\limsup_{\e \ra 0} \sup_{3\delta_\e\lqq x \lqq \Delta_\e} \PP(X^{\e, x}_t - x^+_t >\delta_\e^{\frac{\beta(1-\beta)}{2}} \vee \Delta_\e^{1-\beta})\\
&\lqq \limsup_{\e \ra 0} \Big(1- \PP(T_1> r_\e) - \PP(\sup_{t\in [0, r_\e]} |\e\xi^\e_t|>\delta_\e)\Big) = 0.
\end{align*}
\end{proof}

\noindent We obtain the main result of this section as a combination 
of Corollary \ref{cor: lower bound} and Lemma \ref{lem: upper bound}.

\begin{corollary}\label{cor: short time scale convergence}
Let the assumptions of the Theorem \ref{thm: exit} be satisfied and $\delta_\e$ chosen as in (\ref{def: delta eps}). 
Then for any $\Delta_\cdot: (0,1)\ra (0,1)$ monotonically increasing with $\lim_{\e \ra 0} \Delta_\e = 0$ and 
$\limsup_{\e \ra 0} 3\delta_\e / \Delta_\e \lqq 1$ there exists $\theta^*>0$ such that 
\[
\lim_{\e \ra 0+} \sup_{3\delta_\e\lqq x\lqq \Delta_\e} 
\PP(\sup_{t\in [0, \e^{-\theta^*}]} |X^{\e, x}_t - x^+_t| > \delta_\e^{\frac{\beta(1-\beta)}{2}}\vee \Delta_\e^{1-\beta}) = 0. 
\]
\end{corollary}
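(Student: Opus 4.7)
The result is the conjunction of Corollary \ref{cor: lower bound} (one-sided lower bound of $X^{\e,x}_\cdot$ against the extremal solution $x^+_\cdot$) and Lemma \ref{lem: upper bound} (the matching upper bound), evaluated on a common time horizon $[0,\e^{-\theta^*}]$. All the heavy probabilistic work has already been carried out in Theorem \ref{thm: exit} and these two consequences; the remaining task is purely to calibrate $\theta^*>0$ so that $m_\e:=\e^{-\theta^*}$ is admissible in both auxiliary results simultaneously.

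First I would pin down $\theta^*$. Corollary \ref{cor: lower bound} applies as soon as $m_\e\e^{\al\rho}\to 0$, i.e.\ $\theta^*<\al\rho$, while Lemma \ref{lem: upper bound} applies as long as $\e^{-\theta^*}$ lies below the horizon $\ti m_\e=\delta_\e^{-\beta/2}\wedge r_\e$ constructed in its proof. Using $\delta_\e\approx_\e\e^{1-\rho(1+\al)}$ from (\ref{def: delta eps}) and $r_\e\approx_\e\e^{-\al\rho}$ from (\ref{def: r eps}), the second constraint becomes $\theta^*<\min\{\tfrac{1}{2}\beta(1-\rho(1+\al)),\,\al\rho\}$. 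Both bounds are strictly positive because $\rho$ was chosen positive in (\ref{eq: choice of rho}) and (\ref{eq: rho upper bound}) forces $\rho(1+\al)<1$; any
\[
0<\theta^*<\min\bigl\{\al\rho,\ \tfrac{1}{2}\beta(1-\rho(1+\al))\bigr\}
\]
is therefore admissible.

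With such a $\theta^*$ fixed, I would invoke both auxiliary results on the common interval $[0,\e^{-\theta^*}]$: Corollary \ref{cor: lower bound} yields $X^{\e,x}_t-x^+_t\gqq -\delta_\e$ throughout the interval, uniformly in $x\gqq 3\delta_\e$, with probability tending to $1$; Lemma \ref{lem: upper bound} yields $X^{\e,x}_t-x^+_t\lqq\delta_\e^{\beta(1-\beta)/2}\vee\Delta_\e^{1-\beta}$ on the same interval, uniformly in $3\delta_\e\lqq x\lqq\Delta_\e$, also with probability tending to $1$. Intersecting the two events and noting that $\delta_\e\lqq\delta_\e^{\beta(1-\beta)/2}$ for all small $\e$ (since $\beta(1-\beta)/2<1$ and $\delta_\e\searrow 0$), the two-sided bound
\[
|X^{\e,x}_t-x^+_t|\lqq\delta_\e^{\beta(1-\beta)/2}\vee\Delta_\e^{1-\beta}
\]
holds on $[0,\e^{-\theta^*}]$ with probability tending to $1$ uniformly in $x\in[3\delta_\e,\Delta_\e]$, which is the claim. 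The only point that could be called subtle is the nonemptiness of the admissible range of $\theta^*$, addressed in the previous paragraph; beyond this calibration there is no substantive obstacle.
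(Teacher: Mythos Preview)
Your proposal is correct and follows exactly the approach indicated in the paper, which merely states that the corollary is ``a combination of Corollary \ref{cor: lower bound} and Lemma \ref{lem: upper bound}'' without giving further details. Your explicit calibration of the admissible range for $\theta^*$ (checking the constraints $\theta^*<\al\rho$ from Corollary~\ref{cor: lower bound} and $\theta^*<\min\{\al\rho,\tfrac{\beta}{2}(1-\rho(1+\al))\}$ from the horizon $\ti m_\e=\delta_\e^{-\beta/2}\wedge r_\e$ of Lemma~\ref{lem: upper bound}) is in fact more careful than what the paper writes out; the paper only alludes to this calibration later in Section~\ref{sec: Hauptsatzbeweis}.
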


\section{The solution leaves a small environment of the origin in a short time} \label{sec: close to the origin}

Let us denote by $(X_t)_{t\gqq 0}$ the strong solution $(X^{\e, 0}_t)_{t\gqq 0}$ of system (\ref{SDE})
with initial value $x=0$.  
In addition we stipulate for $r_1, r_2 >0$ 
\begin{align}\label{eq: null austritt}
&\tau^{\e}(r_1, r_2):=\inf\left\{  t>0: X_t \lqq -r_1 \mbox{ or } X_t \gqq r_2\right\}.
\end{align}
and abbreviate for convenience $\tau_{r_1, r_2} = \tau^{\e}(r_1, r_2)$.

\subsection{When the noise strength meets the non-linear impact: space-time transition points}

\begin{proposition}\label{Prop noise 2} 
For 
\[\al > 1- (\beta^+\wedge \beta^-)\] 
and any $\vt\in (0,1]$ there is a family of monotonically increasing functions $\Theta^+_{\cdot, \vt}, \Theta^-_{\cdot, \vt}, t_{\cdot, \vt}: (0, 1) \ra (0,1)$ with 
$\lim_{\e \ra 0+} \Theta^+_{\e, \vt} =\lim_{\e \ra 0+} \Theta^-_{\e, \vt} = \lim_{\e \ra 0+} t_{\e, \vt} = 0$, such that  
for any function $\ho t_{\cdot, \vt}: (0,1) \ra (0, \infty)$ satisfying $\lim_{\e\ra 0}\hat{t}_{\e, \vt}/t_{\e, \vt}=+\infty$ 
we have
\[
\lim_{\e\ra 0} \PP\left(\tau_{\Theta^-_{\e, \vt}, \Theta^+_{\e, \vt}}>\ho{t}_{\e, \vt}\right)  =0.
\]
\end{proposition}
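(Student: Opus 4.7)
The plan is to exploit the self-similarity $(L_{at})_{t\gqq 0}\stackrel{d}{=}(a^{1/\al}L_t)_{t\gqq 0}$ of the driver to identify critical space--time scales at which, within a single time window of length $t_{\e,\vt}$, the noise $\e L$ produces a single jump large enough to overshoot the diameter of the target window $[-\Theta^-_{\e,\vt},\Theta^+_{\e,\vt}]$ with uniformly positive probability. Such a jump forces the exit of $X$ irrespective of its current position inside the window, which sidesteps any delicate drift analysis. Iterating this single-window estimate via the strong Markov property then yields exit by the much larger time $\ho t_{\e,\vt}$.

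\textbf{Scale selection.} Under $\al>1-(\beta^+\wedge\beta^-)$ both exponents $\al+\beta^\pm-1$ are strictly positive, so one would define, for $\si\in\{+,-\}$,
\[
\Theta^\si_{\e,\vt}:=\bigl(\vt\,\e^\al/B^\si\bigr)^{1/(\al+\beta^\si-1)},\qquad t_{\e,\vt}:=\bigl((\Theta^+_{\e,\vt}\vee\Theta^-_{\e,\vt})/\e\bigr)^\al.
\]
Each of these is polynomial in $\e$ with strictly positive exponent, hence monotonically increasing to $0$ as $\e\ra 0$. By construction $\e\,t_{\e,\vt}^{1/\al}=\Theta^+_{\e,\vt}\vee\Theta^-_{\e,\vt}$, so that the typical noise displacement on the time scale $t_{\e,\vt}$ matches the larger spatial scale.

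\textbf{Single-window exit via a large jump.} By the explicit form (\ref{def: nu}) of the L\'evy measure, the intensity of jumps of $L$ whose absolute value exceeds the threshold $(\Theta^+_{\e,\vt}+\Theta^-_{\e,\vt})/\e$ is
\[
\la^*_{\e,\vt}:=\nu\bigl(\{|y|>(\Theta^+_{\e,\vt}+\Theta^-_{\e,\vt})/\e\}\bigr)=\frac{2c}{\al}\bigl(\e/(\Theta^+_{\e,\vt}+\Theta^-_{\e,\vt})\bigr)^\al,
\]
and combined with $\Theta^+_{\e,\vt}+\Theta^-_{\e,\vt}\lqq 2(\Theta^+_{\e,\vt}\vee\Theta^-_{\e,\vt})=2\e\,t_{\e,\vt}^{1/\al}$ this gives $\la^*_{\e,\vt}\,t_{\e,\vt}\gqq \kappa>0$ for a constant $\kappa$ depending only on $\al$ and $c$. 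Hence, with probability at least $p_0:=1-e^{-\kappa}>0$ (independent of $\e,\vt$), $L$ makes at least one jump of modulus exceeding $(\Theta^+_{\e,\vt}+\Theta^-_{\e,\vt})/\e$ during $[0,t_{\e,\vt}]$, so that the corresponding jump of $\e L$ exceeds the diameter of the window. Denoting the first such jump time by $T^*\lqq t_{\e,\vt}$: either $X$ has already exited the window by $T^*$ (so $\tau\lqq T^*$), or $X_{T^*-}\in[-\Theta^-_{\e,\vt},\Theta^+_{\e,\vt}]$ and the overshoot forces $X_{T^*}\notin [-\Theta^-_{\e,\vt},\Theta^+_{\e,\vt}]$. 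Either way, uniformly in the initial value $x\in[-\Theta^-_{\e,\vt},\Theta^+_{\e,\vt}]$,
\[
\PP_{x}\bigl(\tau_{\Theta^-_{\e,\vt},\Theta^+_{\e,\vt}}\lqq t_{\e,\vt}\bigr)\gqq p_0.
\]

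\textbf{Strong Markov iteration, and main point.} Given $\ho t_{\e,\vt}/t_{\e,\vt}\ra\infty$, one partitions $[0,\ho t_{\e,\vt}]$ into $N_\e:=\lfloor\ho t_{\e,\vt}/t_{\e,\vt}\rfloor\ra\infty$ consecutive windows of length $t_{\e,\vt}$ and applies the strong Markov property of the unique strong solution of (\ref{SDE}) (Proposition at the end of Section \ref{sec: preliminaries}) at the deterministic times $k\,t_{\e,\vt}$. The non-exit event requires survival in each of the $N_\e$ windows, each of conditional probability at most $1-p_0$, which yields
\[
\PP\bigl(\tau_{\Theta^-_{\e,\vt},\Theta^+_{\e,\vt}}>\ho t_{\e,\vt}\bigr)\lqq (1-p_0)^{N_\e}\lra 0.
\]
The distinctive advantage of the overshoot mechanism, and what would otherwise be the main obstacle in this type of argument, is that it delivers a lower bound on the single-window exit probability that is uniform in the starting position and independent of the sensitive drift balance between $\beta^+$ and $\beta^-$; this is precisely why the condition $\al>1-(\beta^+\wedge\beta^-)$ (which makes the critical scales $\Theta^\pm_{\e,\vt}$ and $t_{\e,\vt}$ genuinely vanishing polynomials in $\e$) is sufficient, with no further restriction on $\al$.
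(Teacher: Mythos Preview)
Your argument is correct, and both the single-window estimate and the Markov iteration go through as written: the jump $\Delta X_{T^*}=\e\,\Delta L_{T^*}$ of size exceeding the diameter of $[-\Theta^-_{\e,\vt},\Theta^+_{\e,\vt}]$ forces exit regardless of $X_{T^*-}$, and the resulting bound $\PP_x(\tau\le t_{\e,\vt})\ge p_0$ is uniform in $x$, so the geometric decay $(1-p_0)^{N_\e}$ follows.

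The paper's route is genuinely different. It also iterates by the Markov property (that step is explicitly ``omitted''), but its single-window estimate, Lemma~\ref{lem: Hilfslemma 1}, does not use large jumps at all. Instead, on $\{\chi>t_\e\}$ it rewrites $\e L_t=X_t-\int_0^t b(X_s)\,ds$, bounds the drift contribution by balancing $B^\pm t_\e(\Theta^\mp_\e)^{\beta^\mp}$ against $\Theta^\pm_\e t_\e^{1-\vt}$ via the system~(\ref{eq: boundaries equations part 1}), and then invokes self-similarity of $L_{t_\e}$ to reduce to $\PP(L_1\in[-c,1])<1$. This pins down very specific scales $\Theta^\pm_{\e,\vt},t_{\e,\vt}$ (and a substantive role for $\vt$) that are reused verbatim in the exit-location analysis of Section~\ref{sec: exit distributions} and in Definition~\ref{def: Theta-t}.

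So: your overshoot mechanism is shorter and delivers uniformity in the starting point for free, with the condition $\al>1-(\beta^+\wedge\beta^-)$ entering only through well-posedness and the vanishing of your chosen scales. The paper's approach is heavier but constructive in a way the rest of the article needs: the calibrated $\Theta^\pm_\e$ encode the drift--noise transition and are precisely the boundaries at which the exit-location computation (Proposition~\ref{prop: exit location}, Lemma~\ref{lem: noise exit strictly stable}) is carried out. If you only want Proposition~\ref{Prop noise 2} as stated, your proof suffices; if you want to plug into the later sections, you would still have to produce the paper's scales.
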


We omit the iteration argument by Markov property. The key result is the following. 

\begin{lemma}\label{lem: Hilfslemma 1}
Under the previous assumptions and 
\[\al > 1- (\beta^+\wedge \beta^-)\] 
and $\vt \in (0, 1]$ we have the following statement. 
There is a family of monotonically increasing functions $\Theta^+_{\cdot, \vt}, \Theta^-_{\cdot, \vt}, t_{\cdot, \vt}: (0, 1) \ra (0,1)$ with 
$\lim_{\e \ra 0+} \Theta^+_{\e, \vt} =\lim_{\e \ra 0+} \Theta^-_{\e, \vt} = \lim_{\e \ra 0+} t_{\e, \vt} = 0$, such that
we have 
\[
\lim_{\e \ra 0+} \PP\left(\tau_{\Theta^-_{\e, \vt}, \Theta^+_{\e, \vt}} > t_{\e, \vt}\right) < 1, 
\]
\end{lemma}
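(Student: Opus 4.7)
The plan is to exploit the self-similarity of the symmetric $\al$-stable noise $L$ to rescale both time and space so that, on the resulting short time scale and small spatial scale, the effective drift becomes asymptotically negligible. The exit question for $X^\e$ is then reduced to one for a pure $\al$-stable process from a fixed interval, for which the exit-in-unit-time probability is a universal positive constant.

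First, the hypothesis $\al>1-(\beta^+\wedge\beta^-)$ makes
\[
\mu_0\;:=\;\max\Big\{\frac{\al(1-\beta^+)}{\al-(1-\beta^+)},\;\frac{\al(1-\beta^-)}{\al-(1-\beta^-)}\Big\}
\]
a finite positive number. I would pick any $\mu>\mu_0$ and set
\[
t_{\e,\vt}\;:=\;\e^{\mu},\qquad \Theta^\pm_{\e,\vt}\;:=\;\vt\,\e^{1+\mu/\al}.
\]
All three functions are monotonically increasing in $\e$ and tend to $0$ as $\e\ra 0+$, so the monotonicity and vanishing requirements are met by construction.

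Next I introduce the rescaled process $Y^\e_s:=(\e\,t_{\e,\vt}^{1/\al})^{-1}X_{s\,t_{\e,\vt}}$ for $s\in[0,1]$. By the self-similarity $(L_{at})\stackrel{d}{=}(a^{1/\al}L_t)$, the rescaled noise term is again a symmetric $\al$-stable process with the same L\'evy measure as $L$; call it $\ti L$. A direct computation using the piecewise power form of $b$ shows
\[
Y^\e_s\;=\;\int_0^s b_\e(Y^\e_r)\,dr+\ti L_s,
\]
with $b_\e(y)=c_\e^+B^+ y^{\beta^+}\ind\{y\gqq0\}-c_\e^- B^-|y|^{\beta^-}\ind\{y<0\}$ and $c_\e^\pm=\e^{\beta^\pm-1+\mu(\al+\beta^\pm-1)/\al}$. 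The choice $\mu>\mu_0$ makes both exponents strictly positive, so $c_\e^\pm\ra 0$ as $\e\ra 0$, and the drift $b_\e$ vanishes uniformly on every bounded set.

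In these rescaled coordinates the event $\{\tau_{\Theta^-_{\e,\vt},\Theta^+_{\e,\vt}}\lqq t_{\e,\vt}\}$ coincides with $\{Y^\e\text{ exits }(-\vt,\vt)\text{ before time }1\}$. I would fix a small $\eta\in(0,\vt/2)$ and denote by $\si^\e$ the exit time of $Y^\e$ from the larger interval $(-2\vt,2\vt)$. On $[0,1\wedge\si^\e]$ the drift $b_\e(Y^\e_r)$ is deterministically bounded in absolute value by $(c_\e^+B^++c_\e^-B^-)(2\vt)^{\beta^+\vee\beta^-}$, so pathwise
\[
\sup_{s\in[0,1\wedge\si^\e]}|Y^\e_s-\ti L_s|\;\lqq\;(c_\e^+B^++c_\e^-B^-)(2\vt)^{\beta^+\vee\beta^-}\;\lra\;0,
\]
and the left-hand side is eventually at most $\eta$ for every $\e$ small enough.

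The conclusion then follows by combining this deterministic closeness with the elementary fact that a non-degenerate symmetric $\al$-stable process exceeds any fixed positive level in unit time with strictly positive probability, giving
\[
p\;:=\;\PP\Big(\sup_{s\in[0,1]}\ti L_s\gqq\vt+\eta\Big)\;>\;0,
\]
independent of $\e$. On this event the uniform comparison forces $\sup_{s\in[0,1]}Y^\e_s\gqq\vt$, hence $Y^\e$ exits $(-\vt,\vt)$ before time $1$, and therefore
\[
\liminf_{\e\ra0}\PP(\tau_{\Theta^-_{\e,\vt},\Theta^+_{\e,\vt}}\lqq t_{\e,\vt})\;\gqq\;p>0,
\]
which is equivalent to the stated $\lim_{\e\ra0}\PP(\tau_{\Theta^-_{\e,\vt},\Theta^+_{\e,\vt}}>t_{\e,\vt})<1$. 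The main technical point is the pathwise comparison on $[0,1\wedge\si^\e]$: even though $b$ is only H\"older continuous at the origin, the restriction to this bounded excursion window lets one bound the drift by its supremum over $[-2\vt,2\vt]$ and thereby avoid any Gronwall argument involving the H\"older seminorm.
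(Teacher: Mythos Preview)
Your argument is correct: the rescaling $Y^\e_s=(\e\,t_\e^{1/\al})^{-1}X_{s t_\e}$ together with the choice $\mu>\mu_0$ does make the drift coefficient vanish uniformly on bounded sets, and the two-case split (either $\si^\e<1$, in which case $Y^\e$ has already left $(-\vt,\vt)$, or $\si^\e\ge1$, in which case the pathwise comparison with $\ti L$ applies on all of $[0,1]$) closes the argument cleanly. Two cosmetic points: the bound on $|b_\e|$ over $[-2\vt,2\vt]$ should use $\max\{(2\vt)^{\beta^+},(2\vt)^{\beta^-}\}$ rather than $(2\vt)^{\beta^+\vee\beta^-}$, and strictly speaking you establish $\limsup\le 1-p$ rather than the existence of the limit---but the paper's own proof has the same feature, and only the $\limsup$ bound is used in the Markov iteration of Proposition~\ref{Prop noise 2}.

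The route, however, is genuinely different from the paper's, and the difference matters downstream. The paper does not pick $\Theta^\pm_\e$ freely: it imposes the balance system $B^\pm t_\e(\Theta^\pm_\e)^{\beta^\pm}=\Theta^\mp_\e t_\e^{1-\vt}$ together with $\Theta^\circ_\e=\e t_\e^{1/\al}/(1+t_\e^{1-\vt})$ and \emph{solves} it, obtaining explicit asymmetric thresholds with $\Theta^\circ_\e/\Theta^*_\e\to 0$ when $\beta^+\neq\beta^-$. That asymmetry is the crux of Section~\ref{sec: exit distributions}: Lemma~\ref{lem: reduction process to noise exit asymmetric case}, the choice of the cutoff $\kappa$ in \eqref{eq: choice of cut-off}, and the optional-stopping estimate in Lemma~\ref{lem: noise exit strictly stable} all use the specific exponents of $\Theta^\pm_\e$ computed here. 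Your symmetric choice $\Theta^+_\e=\Theta^-_\e=\vt\,\e^{1+\mu/\al}$ proves the lemma as stated but would not feed into Proposition~\ref{prop: exit location}, where the ratio $\Theta^-_\e/\Theta^+_\e\to 0$ is what forces the exit to occur on the side of the smaller $\beta$. In short: your proof is cleaner for the bare existence claim, while the paper's proof is doing double duty---it is simultaneously a construction of the \emph{particular} thresholds that the rest of Section~\ref{sec: close to the origin} relies on.
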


\noindent For notational convenience we will immediately the dependence on $\vt$, whenever possible. 

\begin{remark}
The parameter $\vt\in (0, 1]$ is a purely technical device, it will turn out in the next section 
that if $\beta^+ = \beta^-$ it cannot be chosen to be $1$ but only arbitrarily close to $1$. 
In any other case it will be set equal $1$. 
\end{remark}

\begin{proof}
For the convenience of notation we will fix $\vt \in (0, 1]$ and drop the respective subscript in the sequel. 
Assume there are $\Theta^+_\e, \Theta^-_\e, t_\e$ as in the statement of the lemma and let us 
abbreviate for convenience $\chi = \tau_{\Theta^-_{\e}, \Theta^+_\e}$. 
The definition of the event $\{\chi > t_{\e}\}$ implies
\[
- \Theta_\e^- \lqq  X_{t} \lqq \Theta_{\e}^+ \qquad \forall t\in [0, t_\e].
\]
Therefore, we infer from the event $\{\chi > t_{\e}\}$ for $t\in\left[0,t_{\e}\right]$ 
that 
\begin{align*}
\e L_{t}  & = X_t  -\int_0^t b(X_s) ds\\
&\lqq X_t +B^-\int_0^t (X_s)^{\beta^-} ds \\
&\lqq \Theta_\e^+ +B^- t_\e (\Theta_\e^-)^{\beta^-}.
\end{align*}
Analogously we obtain 
\begin{align*}
\e L_t &\gqq - \Theta_\e^- -B^+ t_\e (\Theta_\e^+)^{\beta^+}.
\end{align*}
If we now impose that the nonlinear term is asymptotically smaller, 
that is for instance $\Theta_\e^\beta t_\e^{1-\vt}$
than the boundary $\Theta_\e$
\begin{align}
B^+ t_{\e}(\Theta_{\e}^+)^{\beta^+}=\Theta_{\e}^- t_{\e}^{1-\vt}\nonumber\\
B^- t_{\e}(\Theta_{\e}^-)^{\beta^-}=\Theta_{\e}^+ t_\e^{1-\vt} \label{eq: boundaries equations part 1}
\end{align}
it follows 
\[
-(1+t_{\e}^{1-\vt}) \Theta_\e^-\lqq  \e L_t \lqq (1+t_{\e}^{1-\vt})\Theta_{\e}^+, \qquad t\in [0, t_\e], 
\]
and in particular $- (1+t_{\e}^{1-\vt}) \Theta_\e^-\lqq  \e L_{t_\e} \lqq (1+t_{\e}^{1-\vt}) \Theta_{\e}^+$.
As a first case we may assume that $\Theta_\e^+ / \Theta_\e^- \ra 0$ as $\e\ra 0$. 
If we stipulate for $\vt \in(0,1)$ 
\begin{equation}
\label{def: dritte Gleichung 1}\Theta^\circ_\e =\frac{\e t_\e^{\frac{1}{\al}}}{1+t_\e^{1-\vt}}
\end{equation} 
this yields 
\begin{align*}
&\PP\Big( -(1+t_\e^{1-\vt}) \Theta_{\e}^- \lqq \e L_{ t_{\e}} \lqq (1+t_\e^{1-\vt}) \Theta_{\e}^+ \Big) \\
&\PP\Big( -(1+t_\e^{1-\vt}) \frac{\Theta_{\e}^-}{\e t_\e^\frac{1}{\al}}  \lqq t_\e^{-\frac{1}{\al}}  L_{ t_{\e}} 
\lqq (1+t_\e^{1-\vt})\frac{\Theta_{\e}^+}{\e t_\e^\frac{1}{\al}} \Big) \\
& =\PP\Big( -(1+t_\e^{1-\vt}) \frac{\Theta_\e^-}{\Theta_\e^+}\frac{\Theta_{\e}^+}{\e t_\e^\frac{1}{\al}} 
\lqq L_{1}
\lqq (1+t_\e^{1-\vt}) \frac{\Theta_{\e}^+}{\e t_{\e}^{1/\al}} \Big)\\
&= \PP\Big( -\frac{\Theta_\e^-}{\Theta_\e^+} 
\lqq L_{1} \lqq 1 \Big)
 \stackrel{\e \ra 0}{\lra} \PP\Big(-\infty < L_1 \lqq 1\Big)<1. 
\end{align*}
As long as $\lim_{\e \ra 0} t_\e = 0$. The proof concludes 
with the following calculation which shows that for any exponent $\al \in (0,2)$, any powers $\beta^+, \beta^- \in (0,1)$ 
satisfying $\al \gqq 1-(\beta^+\wedge \beta^-)$ and $\e\in (0,1)$ 
the system (\ref{eq: boundaries equations part 1}) together 
either with (\ref{def: dritte Gleichung 1})  
as a unique solution $(\Theta^+_{\e, \vt}, \Theta^-_{\e, \vt}, t_{\e, \vt})_{\e, \vt \in (0,1]}$ 
with $\lim_{\e \ra 0+} t_{\e, \vt} = 0$ for any $\vt \in (0,1)$. 

We solve the equations for $t_\e$, $\Theta^+_\e$ and $\Theta^-_\e$ and 
start with the system (\ref{eq: boundaries equations part 1}) which implies by reinsertion 
\begin{align*}
\Theta^-_\e &= t_\e^{\vt} B^+(\Theta_\e^+)^{\beta^+} \\
&= t_\e^{\vt} B^+(t_\e^{\vt} B^-(\Theta_\e^-)^{\beta^-})^{\beta^+}\\
&= B^+(B^-)^{\beta^+} t_\e^{\vt(1 + \beta^+)}  (\Theta_\e^-)^{\beta^+\beta^-},
\end{align*}
and 
\begin{align*}
&(\Theta^-_\e)^{1-\beta^+\beta^-} =B^+(B^-)^{\beta^+} t_\e^{\vt(1+\beta^+)} \\
\vzv &\qquad \Theta^-_\e = \Big(B^+(B^-)^{\beta^+} t_\e^{\vt(1+\beta^+)} \Big)^\frac{1}{1-\beta^+\beta^-}
= (B^+)^\frac{1}{1-\beta^+\beta^-}\; (B^-)^\frac{\beta^+}{1-\beta^+\beta^-}\; t_\e^\frac{\vt(1+\beta^+)}{1-\beta^+\beta^-}
\end{align*}
and by symmetry 
\begin{align*}
&(\Theta^+_\e)^{1-\beta^+\beta^-} =B^-(B^+)^{\beta^-} t_\e^{\vt(1+\beta^-)} \\
\vzv &\qquad \Theta^+_\e = \Big(B^-(B^+)^{\beta^-} t_\e^{\vt(1+\beta^-)} \Big)^\frac{1}{1-\beta^+\beta^-}
= (B^-)^\frac{1}{1-\beta^+\beta^-}\; (B^+)^{\frac{\beta^-}{1-\beta^+\beta^-}}\; t_\e^\frac{\vt(1+\beta^-)}{1-\beta^+\beta^-}.
\end{align*}
Denote by $\beta^\circ := \beta^+ \wedge \beta^-$ and $\beta^* := \beta^+ \vee \beta^-$. 
The last two formulas yield 
\begin{align*}
&\Theta_\e^* := 
\Theta_\e^+ \vee \Theta_\e^- 
= (B^\circ)^\frac{1}{1-\beta^\circ\beta^*}\; (B^*)^\frac{\beta^\circ}{1-\beta^\circ\beta^*}\; 
t_\e^\frac{\vt(1+\beta^\circ	)}{1-\beta^\circ\beta^*}\\
&\Theta_\e^\circ := 
\Theta_\e^+ \wedge \Theta_\e^- 
= (B^*)^\frac{1}{1-\beta^\circ\beta^*}\; (B^\circ)^\frac{\beta^*}{1-\beta^\circ\beta^*}\; 
t_\e^\frac{\vt(1+\beta^*)}{1-\beta^\circ\beta^*}
\end{align*}
As a consequence, we obtain for $\beta^\circ < \beta^*$ 
\begin{align}
&\lim_{\e\ra 0+} \Theta_\e^\circ/ \Theta_\e^* = 0.\label{eq: order of the boundaries}
\end{align}
and for $\beta = \beta^* = \beta^\circ$ 
\begin{align}
&\frac{\Theta_\e^\circ}{\Theta_\e^*} = 
\frac{(B^*)^\frac{1}{1-\beta^2}\; (B^\circ)^\frac{\beta}{1-\beta^2}}
{(B^\circ)^\frac{1}{1-\beta^2}\; (B^*)^\frac{\beta}{1-\beta^2}}
= \Big(\frac{B^\circ}{B^*}\Big)^{-\frac{1}{1+\beta}}.
\label{eq: order of the boundaries equal }
\end{align}
We complement the system (\ref{eq: boundaries equations part 1}) 
by equation (\ref{def: dritte Gleichung 1}). 
Plugging in directly yields  
\begin{align*}
\vzv &\qquad \e = (B^*)^\frac{1}{1-\beta^*\beta^\circ}\; (B^\circ)^\frac{\beta^*}{1-\beta^*\beta^\circ}\; 
t_\e^{\frac{\vt(1+\beta^*)}{1-\beta^*\beta^\circ}-\frac{1}{\al}}.
\end{align*}
We examine the exponent
\begin{align*}
\frac{\vt(1+\beta^*)}{1-\beta^*\beta^\circ}-\frac{1}{\al} 
&= \frac{\vt\al(1+\beta^*)-1+\beta^*\beta^\circ}{\al(1-\beta^*\beta^\circ)}
&= \frac{\vt \al -1 +\beta^*(\vt \al +\beta^\circ)}{\al(1-\beta^*\beta^\circ)}
\gqq \frac{\vt \al -1 +\beta^*}{\al(1-\beta^*\beta^\circ)}>0,
\end{align*}
since $\vt \al + \beta^\circ>1$ and therefore $\vt \al + \beta^* >1$ 
we have 
\begin{align}
&\qquad \e = (B^*)^\frac{1}{1-\beta^*\beta^\circ}\; (B^\circ)^\frac{\beta^*}{1-\beta^*\beta^\circ}\; 
t_\e^{\frac{\vt \al +\beta^*- 1 + \beta^* (\vt \al +\beta^\circ-1)}{\al(1-\beta^*\beta^\circ)}}\nonumber\\
\vzv &\qquad 
t_\e = \frac{\e^\frac{\al(1-\beta^*\beta^\circ)}{\vt \al +\beta^*- 1 + \beta^* (\vt \al +\beta^\circ-1)}}
{(B^\circ)^\frac{\al \beta^*}{\al +\beta^*-1 + \beta^*(\al +\beta^\circ-1)}\; (B^*)^\frac{\al}{\al +\beta^*-1 + \beta^*(\al +\beta^\circ-1)}}
. \label{eq: t epsilon >1}
\end{align}
We obtain 
\begin{align*}
\Theta^+_\e
&= (B^-)^\frac{1}{1-\beta^\circ\beta^*}\; (B^+)^\frac{\beta^-}{1-\beta^\circ\beta^*}\; t_\e^\frac{\vt(1+\beta^-)}{1-\beta^\circ\beta^*} \\
&= \frac{(B^-)^\frac{1}{1-\beta^\circ\beta^*}\; (B^+)^\frac{\beta^-}{1-\beta^\circ\beta^*}}{(B^\circ)
^\frac{\al\beta^*}{\al +\beta^*-1 + \beta^*(\al +\beta^\circ-1)}\; 
(B^*)^\frac{\al}{\al +\beta^*-1 + \beta^*(\al +\beta^\circ-1)}}\; 
\e^{\frac{\vt\al(1+\beta^-)}{\vt \al +\beta^*- 1 + \beta^* (\vt \al +\beta^\circ-1)}}
\end{align*}
and
\begin{align*}
\Theta^-_\e 
&= \frac{(B^+)^\frac{1}{1-\beta^\circ\beta^*}\; (B^-)^\frac{\beta^+}{1-\beta^\circ\beta^*}}{(B^\circ)
^\frac{\al\beta^*}{\al +\beta^*-1 + \beta^*(\al +\beta^\circ-1)}\; 
(B^*)^\frac{\al}{\al +\beta^*-1 + \beta^*(\al +\beta^\circ-1)}}\; 
\e^{\frac{\vt\al(1+\beta^+)}{\vt\al +\beta^*-1 + \beta^*(\vt\al +\beta^\circ-1)}}.
\end{align*}
These calculations establish the existence and uniqueness of $(\Theta^+_{\e, \vt}, \Theta^-_{\e, \vt}, t_{\e, \vt})_{\e, \vt \in (0,1]}$ 
as claimed in the statement of Lemma \ref{lem: Hilfslemma 1}.
\end{proof}

\subsection{The exit locations from a neighborhood of the origin}\label{sec: exit distributions}

For a fixed parameter $\vt$ fixed and we denote by $\chi :=  \chi_\e := \tau_{\Theta^+_\e, \Theta^-_\e}$ as defined 
in (\ref{eq: null austritt}) 
and $(\Theta^+_\e, \Theta^-_\e, t_\e)_{\e \in (0,1]}$ 
defined by Definition~\ref{def: Theta-t} and Lemma~\ref{lem: Hilfslemma 1}. 
In this subsection we determine the asymptotic probabilities  
\[
\PP(X^\e_{\chi} \gqq \Theta^+_\e)\quad  \mbox{ and } \quad\PP(X^\e_{\chi} \lqq -\Theta^-_\e)
\]
in the limit of small $\e$.

\begin{proposition}\label{prop: exit location}
For $\al> 1- (\beta^* \wedge \beta^\circ)$ and $\beta^+\neq \beta^-$ we have 
\begin{equation}
\PP(X^\e_{\chi} \gqq \Theta^+_\e)
= 
\begin{cases} 1 & \beta^+ < \beta^-,\\ 0 & \beta^+ >\beta^-.\end{cases}
\end{equation}
\end{proposition}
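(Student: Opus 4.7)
The plan is to treat the case $\beta^+<\beta^-$ (the opposite case being symmetric). The heuristic is that on the time window $[0,\hat t_\e]$ with $\hat t_\e/t_\e\ra\infty$ slowly, the nonlinear drift is a $o(\Theta^+_\e)$--perturbation of the pure noise $\e L$, so the exit direction of $X^\e$ is asymptotically the same as the exit direction of $\e L$ from the asymmetric interval $[-\Theta^-_\e,\Theta^+_\e]$. By the self-similarity of $L$, this reduces to the exit direction of $L$ from $[-1,\Theta^+_\e/\Theta^-_\e]$, and since $\beta^+<\beta^-$ forces the ratio $\Theta^+_\e/\Theta^-_\e\ra 0$ by the explicit formulas of Lemma~\ref{lem: Hilfslemma 1}, the upper boundary shrinks to the starting point~$0$, so the exit is upward with overwhelming probability.

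The first step is the drift control. On $\{\chi>s\}$, $X^\e_s\in[-\Theta^-_\e,\Theta^+_\e]$ and (\ref{eq: boundaries equations part 1}) yields $|b(X^\e_s)|\lqq \Theta^*_\e t_\e^{-\vt}$ with $\Theta^*_\e=\Theta^-_\e\vee\Theta^+_\e$. By Proposition~\ref{Prop noise 2}, $\PP(\chi>\hat t_\e)\ra 0$ for $\hat t_\e=t_\e g_\e$ with $g_\e\ra\infty$ slowly, so on the high-probability event $\{\chi\lqq \hat t_\e\}$ the drift term $D_\chi:=\int_0^\chi b(X^\e_s)\,ds$ satisfies $|D_\chi|\lqq c_\e:=\Theta^*_\e t_\e^{1-\vt}g_\e$. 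A direct computation using the formulas for $\Theta^\pm_\e$ shows $c_\e/\Theta^+_\e$ is of order $t_\e^\kappa g_\e$ with
\[
\kappa=\frac{(1-\beta^+\beta^-)-\vt(1-\beta^+)(1+\beta^-)}{1-\beta^+\beta^-};
\]
choosing $\vt<(1-\beta^+\beta^-)/((1-\beta^+)(1+\beta^-))$ (which is strictly less than $1$ precisely because $\beta^+<\beta^-$) makes $\kappa>0$, and for $g_\e$ sufficiently slow we obtain $c_\e=o(\Theta^+_\e)$.

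The second step is the coupling with the pure noise. From $X^\e_\chi=\e L_\chi+D_\chi$ the event $\{X^\e_\chi\lqq -\Theta^-_\e\}\cap\{\chi\lqq\hat t_\e\}$ forces $\e L_\chi\lqq -\Theta^-_\e+c_\e$, hence the infimum $\inf_{s\lqq\hat t_\e}\e L_s\lqq -(\Theta^-_\e-c_\e)$. Introducing the stopping time $\sigma_\e:=\inf\{s>0:\e L_s\lqq -(\Theta^-_\e-c_\e)\}$ and using $\e L_s\stackrel{d}{=}L_{\e^\al s}$ together with the scaling $\tau(y):=\inf\{s>0:L_s\lqq -y\}\stackrel{d}{=}y^\al\tau(1)$, we get
\[
\PP(\sigma_\e\lqq\hat t_\e)=\PP\!\left(\tau(1)\lqq \hat t_\e\,\e^\al/(\Theta^-_\e-c_\e)^\al\right).
\]
From (\ref{def: dritte Gleichung 1}) one has $\e^\al t_\e\asymp(\Theta^\circ_\e)^\al=(\Theta^+_\e)^\al$, while $(\Theta^-_\e/\Theta^+_\e)^\al=t_\e^{-\al\vt(\beta^--\beta^+)/(1-\beta^+\beta^-)}$, so the argument of $\tau(1)$ is of order $g_\e\,t_\e^{\al\vt(\beta^--\beta^+)/(1-\beta^+\beta^-)}\ra 0$. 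Since $\tau(1)>0$ almost surely, $\PP(\sigma_\e\lqq\hat t_\e)\ra 0$.

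Putting these together, $\PP(X^\e_\chi\lqq -\Theta^-_\e)\lqq\PP(\chi>\hat t_\e)+\PP(\sigma_\e\lqq\hat t_\e)\ra 0$, and since the two exit events partition $\Om$, $\PP(X^\e_\chi\gqq\Theta^+_\e)\ra 1$. The case $\beta^+>\beta^-$ is handled by exchanging the roles of the two boundaries. The main technical obstacle lies in Step~1: the simultaneous choice of $\vt$ and the growth rate $g_\e$ of $\hat t_\e$ so that the drift is dominated by the narrower boundary $\Theta^+_\e$, and not merely by the wider one $\Theta^-_\e$; the constraint $\vt<(1-\beta^+\beta^-)/((1-\beta^+)(1+\beta^-))$ is exactly what the remark about $\beta^+=\beta^-$ predicts, since the upper bound on $\vt$ collapses to $1$ in the degenerate case.
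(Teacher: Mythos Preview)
Your argument is correct and establishes the claim, but it takes a route genuinely different from the paper's, and it does a bit more work than it needs to.

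\textbf{Comparison with the paper.} The paper fixes $\vt=1$ (see Definition~\ref{def: Theta-t}) and treats the case $\beta^+>\beta^-$. For the drift part it proves (Lemma~\ref{lem: reduction process to noise exit asymmetric case}) that the positive part of $V^\e=X^\e-\e L$ is $o(\Theta^+_\e)$ via the self-similarity of $L$ applied to $\sup_{[0,\hat t_\e]}(\e L)_+^{\beta^*}$. For the noise part it introduces a cut-off level $\e^{-\kappa}$, works with the resulting bounded-jump martingale $\e\xi^\kappa$, and applies optional stopping (Lemma~\ref{lem: noise exit strictly stable}) to get the asymmetric exit estimate $\PP(\si^+_{\Theta^+_\e}<\si^-_{\Theta^-_\e})\lesssim(\Theta^-_\e+\e^{1-\kappa})/(\Theta^+_\e+\Theta^-_\e)\ra 0$. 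Your Step~2 avoids both the cut-off and the optional stopping entirely: you use only the scaling $\tau(y)\stackrel{d}{=}y^\al\tau(1)$ together with $\tau(1)>0$ a.s., which is more elementary. The price is that you must take $\vt<1$, so your $(\Theta^\pm_\e,t_\e)$ are not the canonical ones of Definition~\ref{def: Theta-t}; this is harmless for the main theorem, since any admissible family works in Section~\ref{sec: Hauptsatzbeweis}.

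\textbf{An over-claim in Step~1.} Your Step~2 only uses that $\Theta^-_\e-c_\e\asymp\Theta^-_\e$, i.e.\ $c_\e=o(\Theta^-_\e)$; you never actually use $c_\e=o(\Theta^+_\e)$. From $c_\e=\Theta^*_\e\,t_\e^{1-\vt}g_\e=\Theta^-_\e\,t_\e^{1-\vt}g_\e$ one gets $c_\e/\Theta^-_\e=t_\e^{1-\vt}g_\e\ra 0$ for \emph{any} $\vt<1$ and slowly growing $g_\e$. Hence the sharp constraint $\vt<(1-\beta^+\beta^-)/((1-\beta^+)(1+\beta^-))$ and the computation of the exponent $\kappa$ are not needed; the only genuine lower constraint on $\vt$ is the one inherited from Lemma~\ref{lem: Hilfslemma 1} ensuring $t_\e\ra 0$, namely $\vt>(1-\beta^+\beta^-)/[\al(1+\beta^-)]$, which is compatible with $\vt<1$ precisely under the hypothesis $\al>1-\beta^+$. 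Your last paragraph therefore misidentifies the ``main technical obstacle'': the drift need only be dominated by the \emph{wider} boundary $\Theta^-_\e$, not the narrower one.
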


\subsubsection{Close to the transition points the non-linear impact of the noise remains subcritical}

This section controls that there is no exit by non-linear impact of the noise. 
We decompose $X^\e$ into the sum of $V^\e$ and $\e L$, where $V_t^\e := X^\e_t - \e L_t$. 
It satisfies 
\begin{align*}
V^\e_t = \int_0^t b(V^\e_s + \e L_s) ds, \qquad t\gqq 0. 
\end{align*}

\begin{lemma}\label{lem: reduction process to noise exit asymmetric case}
Assume $\beta^+> \beta^-$ the parametrized family of functions $(\Theta^+_{\e, 1}, \Theta^-_{\e,1}, t_{\e,1})_{\e \in (0,1]}$ 
determined in Definition \ref{def: Theta-t}. 
Then there exists $g>0$ such that for $\hat t_\e := t_\e |\ln(\e)|, \e \in (0,1)$ we have 
\begin{align*}
\PP(\sup_{t\in [0, \hat t_\e]} (V^\e_t)_+ > \Theta^+_\e \e^g) \ra 0. 
\end{align*}
\end{lemma}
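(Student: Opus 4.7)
The plan is to prove the lemma by a scalar Bihari-type bootstrap on the nondecreasing envelope
\[
M(t):=\sup_{s\lqq t}(V^\e_s)_+,
\]
combined with a self-similarity tail estimate for an additive noise term.

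Starting from the pointwise inequality $b(x)\lqq B^+(x_+)^{\beta^+}$, valid for all $x\in\RR$, and the identity $V^\e_t=\int_0^t b(X^\e_s)\,ds$, one obtains $(V^\e_t)_+\lqq B^+\int_0^t (X^\e_s)_+^{\beta^+}\,ds$. Since $X^\e_s=V^\e_s+\e L_s$ and $(a+b)_+\lqq a_++b_+$, we have $(X^\e_s)_+\lqq M(s)+(\e L_s)_+$, and the subadditivity $(a+b)^{\beta^+}\lqq a^{\beta^+}+b^{\beta^+}$ (valid since $\beta^+\in(0,1)$) together with the monotonicity of $M$ yields
\[
M(t)\lqq B^+ t\,M(t)^{\beta^+}+N(t),\qquad N(t):=B^+\int_0^t(\e L_s)_+^{\beta^+}\,ds.
\]
An elementary case analysis (either $M(t)\lqq 2N(t)$, or else $M(t)-N(t)>M(t)/2$ which forces $M(t)^{1-\beta^+}\lqq 2B^+t$) delivers the scalar dichotomy
\[
M(t)\lqq \max\Bigl(2N(t),\;(2B^+t)^{\frac{1}{1-\beta^+}}\Bigr).
\]

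For the deterministic branch, I evaluate $(2B^+\hat t_\e)^{1/(1-\beta^+)}$ at $\hat t_\e=t_\e|\ln\e|$ using the explicit power-of-$\e$ formulas for $t_\e$ and $\Theta^+_\e$ obtained at the end of the proof of Lemma \ref{lem: Hilfslemma 1} with $\vt=1$. A direct algebraic computation reveals that the exponent of $\e$ in the ratio $\hat t_\e^{1/(1-\beta^+)}/\Theta^+_\e$ equals a fixed positive multiple of $(\beta^+-\beta^-)$ divided by the denominator $\alpha(1+\beta^+)-1+\beta^+\beta^-$, which is itself positive thanks to $\al>1-\beta^-$. Hence $(2B^+\hat t_\e)^{1/(1-\beta^+)}\lqq \Theta^+_\e\,\e^{g_1}$ for some $g_1>0$, up to a negligible factor $|\ln\e|^{1/(1-\beta^+)}$.

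For the stochastic branch, I exploit the self-similarity of $L$: the variable $\sup_{s\lqq\hat t_\e}|\e L_s|$ has the same law as $\e\hat t_\e^{1/\al}\sup_{s\lqq 1}|\tilde L_s|$, and the supremum of the symmetric $\al$-stable process obeys $\PP(\sup_{s\lqq 1}|\tilde L_s|>K)\lqq CK^{-\al}$. Taking $K=\e^{-\eta}$ for a small $\eta>0$ yields $\sup_{s\lqq\hat t_\e}|\e L_s|\lqq \e^{1-\eta}\hat t_\e^{1/\al}$ with probability at least $1-C\e^{\al\eta}$, and on that event
\[
N(\hat t_\e)\lqq B^+\hat t_\e\,\bigl(\e^{1-\eta}\hat t_\e^{1/\al}\bigr)^{\beta^+}=B^+\e^{\beta^+(1-\eta)}\hat t_\e^{1+\beta^+/\al}.
\]
An algebraic computation analogous to the previous one shows that the exponent of $\e$ in $\e^{\beta^+}\hat t_\e^{1+\beta^+/\al}/\Theta^+_\e$ is again a positive multiple of $(\beta^+-\beta^-)(1+\beta^+)$; choosing $\eta$ small enough then yields $N(\hat t_\e)\lqq \Theta^+_\e\,\e^{g_2}$ with $g_2>0$ on the high-probability event. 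Setting $g:=\tfrac12\min(g_1,g_2)$ and inserting both bounds into the dichotomy concludes the proof.

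The main technical obstacle is the control of $N(\hat t_\e)$. Since the standing assumption $\al>1-(\beta^+\wedge\beta^-)=1-\beta^-$ does not prevent $\beta^+\gqq\al$, the moment $\EE[|L_1|^{\beta^+}]$ may be infinite and a direct Markov estimate on $\EE[N(\hat t_\e)]$ is unavailable. The remedy is to replace the moment bound by the pathwise domination $(\e L_s)_+\lqq \sup_{u\lqq\hat t_\e}|\e L_u|$ and to invoke the universal polynomial tail of the supremum of a symmetric $\al$-stable process. The remaining effort is the bookkeeping of exponents, showing that the asymmetry $\beta^+>\beta^-$ is precisely what forces both candidate bounds $(2B^+\hat t_\e)^{1/(1-\beta^+)}$ and $\e^{\beta^+}\hat t_\e^{1+\beta^+/\al}$ to fall strictly below $\Theta^+_\e$ by a polynomial factor in $\e$.
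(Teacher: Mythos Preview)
Your argument is correct and its heart coincides with the paper's: both reduce the problem to the self-similarity estimate for $\sup_{s\lqq \hat t_\e}|\e L_s|$ and then verify that the exponent of $\e$ in the ratio $\e^{\beta^+}\hat t_\e^{1+\beta^+/\al}/\Theta^+_\e$ is a positive multiple of $(\beta^+-\beta^-)(1+\beta^+)$; your computation of that exponent and of the positivity of the denominator $\al(1+\beta^+)-1+\beta^+\beta^-$ matches the paper's line by line. The difference is in how one passes from this noise estimate to a bound on $(V^\e_t)_+$. The paper's proof is essentially a one-step sketch: it records only the bound on $\sup_t(\e L_t)_+^{\beta^+}$ and the exponent calculation, leaving the link to $V^\e$ implicit. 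You supply that link rigorously via the Bihari-type dichotomy $M(t)\lqq\max\bigl(2N(t),(2B^+t)^{1/(1-\beta^+)}\bigr)$, which in turn forces you to check a second, ``deterministic'' branch $(2B^+\hat t_\e)^{1/(1-\beta^+)}\ll\Theta^+_\e$ that the paper never writes down. Your route is thus slightly longer but self-contained; the paper's is terser but relies on the reader filling in exactly the bootstrap you provide. Your use of the pathwise domination $(\e L_s)_+\lqq\sup_u|\e L_u|$ together with the tail bound $\PP(\sup_{u\lqq 1}|L_u|>K)\lqq CK^{-\al}$, instead of moments of $|L_1|^{\beta^+}$, is a genuine improvement in robustness, since the paper's sketch does not address the case $\beta^+\gqq\al$ where such moments are infinite.
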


\begin{proof}
As in the previous lemma the self-similarity 
\begin{align*}
\sup_{t\in [0, \hat t_\e]} (\e L_t)_+^{\beta^*} \stackrel{d}{=} \e^{\beta^*} \hat t_{\e}^{\frac{\beta^*}{\al}} (L_1)_+^{\beta^*} 
 \end{align*}
yields 
\begin{align*}
\PP(\sup_{t\in [0, \hat t_\e]} (\e L_t)_+^{\beta^*} > \Theta^*_\e \e^g) 
&\lqq \PP(\sup_{t\in [0, \hat t_\e]}\e^{\beta^*} \hat t_{\e}^{\frac{\beta^*}{\al}} (L_1)_+^{\beta^*}> \Theta^*_\e \e^g)\\
\end{align*}
We check whether 
\begin{align*}
\e^{\beta^*} t_\e^{\frac{\al + \beta^*}{\al} - \frac{1+\beta^\circ}{1-\beta^*\beta^\circ}} \ra 0, \mbox{ as } \e \ra 0. 
\end{align*}
Check the exponent 
\begin{align}
&\beta^* + \frac{\al (1-\beta^\circ\beta^*)}{\al + \beta^*-1 + \beta^*(\al +\beta^\circ -1)} 
\Big(\frac{\al + \beta^*}{\al} - \frac{1+\beta^\circ}{1-\beta^*\beta^\circ}\Big)\nonumber\\
&= \frac{\beta^* (\al + \beta^*-1 + \beta^*(\al +\beta^\circ -1))+ (\al + \beta^*)(1-\beta^*\beta^\circ) 
- \al (1+\beta^\circ)}{\al + \beta^*-1 + \beta^*(\al +\beta^\circ -1)}\label{def: exponent}
\end{align}
By assumption the denominator is positive. The enumerator behaves as 
\begin{align*}
&\beta^* (\al + \beta^*-1 + \beta^*(\al +\beta^\circ -1))+(\al+ \beta^*)(1-\beta^*\beta^\circ) - \al (1+\beta^\circ)\\
& = \al \beta^* + (\beta^*)^2 - \beta^* + \al (\beta^*)^2 + \beta^\circ (\beta^*)^2 
 - (\beta^*)^2 + \al + \beta^* -\al \beta^\circ \beta^*- \beta^\circ (\beta^*)^2 -\al - \al\beta^\circ\\
& = \al \beta^*  + \al (\beta^*)^2  -\al \beta^\circ \beta^* - \al \beta^\circ\\
& = \al (\beta^* - \beta^\circ) + \al \beta^*(\beta^*-\beta^\circ)>0.
\end{align*}
We set $2g$ equal to the expression in (\ref{def: exponent}). 
\end{proof}

\begin{definition}\label{def: Theta-t}
Let $\alpha \in (0,2)$ and $\beta^+, \beta^- \in (0,1)$ given satisfying $\alpha > 1- (\beta^+\wedge \beta^-)$. 
For any $\alpha$-stable noise $L$, 
we define the family $(\Theta_{\e}^+, \Theta^-_{\e}, t_{\e})_{\e\in (0,1)} := (\Theta_{\e, \vt^*}^+, \Theta^-_{\e, \vt^*}, t_{\e, \vt^*})_{\e\in (0,1)}$ 
defined in the proof of Lemma \ref{lem: Hilfslemma 1} with 
\begin{equation}
\vt^* = 
\begin{cases} 
\frac{1}{2}(1+\frac{1-\beta}{\al}) &\mbox{ if }\beta^* = \beta^\circ,\\
1  &\mbox{ else. }
\end{cases}
\end{equation}
\end{definition}

\subsubsection{The spatial exit probabilities from the space-time box of the transition points}

In the sequel we determine $\lim_{\e \ra 0+} \PP(\e L_\chi\gqq \Theta_\e^+)$. 
This exit problem will be mainly treated in the spirit of the Brownian case 
as for instance in the book of Revuz and Yor \cite{revuz-yor}. 
Denote for $\kappa \in \RR$ and $\e\in (0,1)$ the jump time 
\[
\tau_\kappa(\e) := \inf\{t>0~|~|\Delta L_t|>\e^{-\kappa}\}. 
\]
The appropriate choice of $\kappa \in \RR$ allows to give an estimate for 
the first exit problem of $\e L$ from $[-\Theta^-_\e, \Theta^+_\e]$ 
in the sense of Revuz and Yor, since $|\e \Delta \xi^\kappa_t|\lqq C \e^{1-\kappa}\lqq \Theta^*_\e$. 
This means the jump to exit the interval is small in comparison to the boundary and vanishes in the limit of small $\e$. 
$\kappa$ should verify two propoerties. 
First it has to ensure that jumps beyond the threshold $\e^\kappa$ occur after $t_\e$,
with a probability mass which tends to $1$. More precisely, since 
\[
\nu(B^c_{\e^{-\kappa}}(0)) = 2 \int_{\e^{-\kappa}}^\infty \frac{dy}{y^{\al+1}} 
= \frac{-2}{\al} y^{-\al} \Big|_{\e^{-\kappa}}^\infty = \frac{2}{\al} \e^{\kappa \al}
\]
we impose on $\kappa$ that 
\begin{equation}\label{eq: tau kappa}
\PP(\tau_\kappa(\e) > t_\e) = \exp(-\frac{2}{\al} \e^{\kappa \al} t_\e)\ra 1, \quad \mbox{ as }\e \ra 0. 
\end{equation}
This is satisfied if 
\begin{equation}\label{eq: time condition on kappa}
\kappa \al > \frac{-\al (1-\beta^\circ \beta^*)}{\vt\al + \beta^*-1 + \beta^*(\vt\al + \beta^\circ -1)}.  
\end{equation}
As a second crucial feature we need 
\begin{equation}\label{eq: tau kappa 2}
\e^{1-\kappa} / \Theta^*_\e  
\ra 0, \qquad \mbox{ as } \e \ra 0+.
\end{equation}
This imposes
\begin{equation}\label{def: kappa} 
1- \kappa > \frac{\vt \al (1+ \beta^\circ)}{\vt\al + \beta^* -1 + \beta^*(\vt \al + \beta^\circ -1)}.
\end{equation} 
We verify that the conditions (\ref{eq: time condition on kappa}) and (\ref{eq: tau kappa 2}) reading 
\begin{equation}\label{eq: choice of cut-off}
\frac{\vt \al (1+ \beta^\circ)}{\vt\al + \beta^* -1 + \beta^*(\vt \al + \beta^\circ -1)} -1 < - \kappa < \frac{(1-\beta^\circ \beta^*)}{\vt\al + \beta^*-1 + \beta^*(\vt\al + \beta^\circ -1)}
\end{equation}
can be satisfied simultaneously. 
On the common the denominator we have to verify the positivity of the enumerators' difference 
\begin{align*}
&1-\beta^\circ \beta^* - \vt \al (1+ \beta^\circ) +(\vt\al + \beta^* -1 + \beta^*(\vt \al + \beta^\circ -1)) \\
& = -\beta^\circ \beta^* - \vt \al - \vt \al \beta^\circ +  \vt\al + \beta^*  + \beta^*\vt \al + \beta^*\beta^\circ -\beta^* \\
& =   \vt \al (\beta^* -\beta^\circ) > 0. 
\end{align*}
For $\vt\in (0,1)$ we may fix 
\begin{equation}\label{def: kappa precise}
\kappa = - \frac{(1-\beta^\circ \beta^*)-\vt^2 \al (\beta^* -\beta^\circ)}{\vt\al + \beta^*-1 + \beta^*(\vt\al + \beta^\circ -1)} 
\end{equation}
Going back to the Poisson random measure representation of $L$ 
we then have for any $T> 0$ 
\begin{align}
L_t &= \int_0^t \int_{|y|\lqq 1} y (N(ds dy) - ds \nu(dy)) + \int_{0}^t \int_{|y|>1} y N(ds dy) \nonumber\\
&= \int_0^t \int_{|y| \lqq \e^{-\kappa} } y \ti N(ds dy) 
\qquad \PP(~\cdot~|~\tau_\kappa(\e)\gqq T)-\mbox{a.s. for all }t\in [0, T]. \label{eq: L conditioned on small jumps}
\end{align}
The first summand is given as the L\'evy martingale $(\xi^\kappa_t)_{t\gqq 0}$, 
\begin{align*}
\xi^\kappa_t = \int_0^t \int_{|y| \lqq \e^{-\kappa} } y \ti N(ds dy), \qquad t\gqq 0. 
\end{align*}

We define the for $r^+, r^->0$ and $\e>0$ the hitting times 
of $\RR \setminus (-\Theta^-_\e, \Theta^+_\e)$ 
\begin{align}
\si^+_{r^+} &:= \inf\{t>0~|~\e \xi^\kappa_t \gqq r^+\},\nonumber\\
\si^-_{r^-} &:= \inf\{t>0~|~\e \xi^\kappa_t \lqq - r^-\},\nonumber\\
\si_{r^+, r^-} &:= \si^+_{r^+} \wedge \si^-_{r^-}.\label{def: sigma}
\end{align}

\begin{lemma}\label{lem: noise exit strictly stable} 
Under these assumptions and $\beta^+>\beta^-$ we obtain 
\begin{align*}
\limsup_{\e\ra 0} \PP(\si^+_{\Theta^+_\e}< \si^-_{\Theta^-_\e}) 
&\lqq \limsup_{\e\ra 0} \frac{\Theta^-_\e+ \e^{1-\kappa}}{\Theta^+_\e+ \Theta^-_\e+ \e^{1-\kappa}} = 0. 
\end{align*}
\end{lemma}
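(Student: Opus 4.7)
The plan is to apply optional stopping to the compensated martingale $\e\xi^\kappa$, in the spirit of the classical Revuz--Yor argument for one-dimensional exit probabilities of Brownian motion, but with the uniform jump bound $|\e\Delta\xi^\kappa|\lqq \e^{1-\kappa}$ (guaranteed by (\ref{eq: L conditioned on small jumps}) and the definition of the large-jump cut-off) playing the role that continuity plays in the diffusive case. First I would verify that $\si := \si^+_{\Theta^+_\e}\wedge\si^-_{\Theta^-_\e}$ is almost surely finite: since $\e\xi^\kappa$ is a zero-mean martingale whose quadratic variation grows linearly in time while the interval $(-\Theta^-_\e,\Theta^+_\e)$ is bounded, a standard Doob/Chebyshev comparison forces $\si<\infty$ $\PP$-a.s. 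The process $(\e\xi^\kappa_{t\wedge\si})_{t\gqq 0}$ is uniformly bounded by $(\Theta^+_\e\vee\Theta^-_\e)+\e^{1-\kappa}$, so optional stopping combined with bounded convergence yields $\EE[\e\xi^\kappa_\si]=0$.

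Next I would exploit the jump bound to pin down the overshoot. On $\{\si^+_{\Theta^+_\e}<\si^-_{\Theta^-_\e}\}$ the process lies in $(-\Theta^-_\e,\Theta^+_\e)$ immediately before $\si$ and its jump at $\si$ has magnitude at most $\e^{1-\kappa}$, so $\e\xi^\kappa_\si\in[\Theta^+_\e,\Theta^+_\e+\e^{1-\kappa}]$; symmetrically, on the complementary event $\e\xi^\kappa_\si\in[-\Theta^-_\e-\e^{1-\kappa},-\Theta^-_\e]$. Setting $p:=\PP(\si^+_{\Theta^+_\e}<\si^-_{\Theta^-_\e})$ and bounding the right exit value below by $\Theta^+_\e$ and the left exit value below by $-(\Theta^-_\e+\e^{1-\kappa})$, one obtains
\[
0 \;=\; \EE[\e\xi^\kappa_\si] \;\gqq\; \Theta^+_\e\, p \;-\; (\Theta^-_\e+\e^{1-\kappa})(1-p),
\]
which, after rearrangement, gives $p\lqq (\Theta^-_\e+\e^{1-\kappa})/(\Theta^+_\e+\Theta^-_\e+\e^{1-\kappa})$, i.e. the first claimed inequality.

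To conclude the vanishing of this fraction I would invoke the explicit asymptotics from the proof of Lemma \ref{lem: Hilfslemma 1}. Under the hypothesis $\beta^+>\beta^-$ one has $\beta^\circ=\beta^-$, $\beta^*=\beta^+$, and the $t_\e$-exponent of $\Theta^-_\e$ is $\vt(1+\beta^+)/(1-\beta^+\beta^-)$, strictly larger than the corresponding exponent $\vt(1+\beta^-)/(1-\beta^+\beta^-)$ of $\Theta^+_\e$, so $\Theta^-_\e/\Theta^+_\e\to 0$; meanwhile the cut-off $\kappa$ was fixed in (\ref{def: kappa precise}) precisely so that (\ref{eq: tau kappa 2}) holds, forcing $\e^{1-\kappa}/\Theta^+_\e\to 0$. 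Combining these two decays gives the right-hand side tending to zero. The main obstacle is really the tension in the choice of $\kappa$: it must simultaneously be large enough that no large jump of $L$ occurs before time $t_\e$ (so that $L=\xi^\kappa$ on the relevant window and the martingale machinery applies) and small enough that the overshoot $\e^{1-\kappa}$ is negligible compared to the asymmetric scales $\Theta^\pm_\e$, and the compatibility of these two requirements is exactly what was established in (\ref{eq: choice of cut-off}) using $\beta^+\neq\beta^-$.
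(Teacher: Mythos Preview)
Your proposal is correct and follows essentially the same route as the paper: optional stopping applied to the bounded-jump martingale $\e\xi^\kappa$, overshoot control via the jump bound $\e^{1-\kappa}$, and then the explicit scale comparison $\Theta^-_\e/\Theta^+_\e\to 0$ together with $\e^{1-\kappa}/\Theta^*_\e\to 0$ from (\ref{eq: tau kappa 2}). The only cosmetic difference is that the paper localizes with the truncated times $\si^{\pm,n}=\si^\pm\wedge n$ and passes to the limit $n\to\infty$, whereas you first argue $\si<\infty$ a.s.\ and invoke bounded convergence for the stopped martingale; both yield the same inequality.
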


\begin{proof}
The definition of the exit times and the choice of the jump size yields the estimates 
\begin{align*}
&\e \xi^\kappa_{\si^-} \gqq -(r^-+\e^{1-\kappa}) \quad \mbox{ and }\quad \e \xi^\kappa_{\si^{-}-} < -r^-
\qquad \mbox{ a.s.  on the event }\{\si^{-}\lqq n\}.
\end{align*}
For $r_1, r_2>0$ and $n\in \NN$ given we fix
\begin{align*}
\si^{+, n} &:= \si^+_{r^+} \wedge n, \\
\si^{-, n} &:= \si^-_{r^-} \wedge n, \\
\si^{n} &:= \si^{+, n} \wedge \si^{-, n}.
\end{align*}
Applying the optional stopping theorem we obtain 
\begin{align*}
0 &= \EE[\e \xi^\kappa_{\si^n}] \\
&= \EE[\e \xi^\kappa_{\si^n} (\ind\{\si^{+, n} < \si^{-, n}\}+ \ind\{\si^{+, n}\gqq \si^{-,n}\}) ]\\
&= \EE[\e \xi^\kappa_{\si^{+,n}} \ind\{\si^{+,n}< \si^{-,n}\}+ \e \xi^\kappa_{\si^{-,n}} \ind\{\si^{+,n}\gqq \si^{-,n}\}]
\end{align*}
we may estimate 
\begin{align*}
0 &= \EE[\e \xi^\kappa_{\si^{+,n}} \ind\{\si^{+,n}< \si^{-,n}\}+ \e \xi^\kappa_{\si^{-,n}} \ind\{\si^{+,n}\gqq \si^{-,n}\}]\\
&\gqq r^+ \PP(\{\si^+_{r^+}< \si^-_{r^-}\}\cap \{\si_\e \lqq n\}) - (r^-+\e^{1-\kappa}) \PP(\si^-_{r^-}\lqq \si^+_{r^+}). 
\end{align*}
Letting $n$ tend to $\infty$ we obtain 
\begin{align}
0 &\gqq r^+\PP(\si^+_{r^+}< \si^-_{r^-}) - (r^-+\e^{1-\kappa}) (1-\PP(\si^+_{r^+}<\si^-_{r^-}))
\label{eq: proba estimate for abstract boundaries}
\end{align}
and eventually 
\begin{align}\label{eq: asymptotic proto prob}
\PP(\si^+_{\Theta^+_\e}< \si^-_{\Theta^-_\e}) 
\lesssim_\e \frac{r^- +\e^{1-\kappa}}{r^+ + r^- +\e^{1-\kappa}}.
\end{align}
The choice of $\kappa$ 
now entails that $r^+$ replaced by $\Theta^+_\e = \Theta^*_\e$ leads to 
\begin{align*}
\e^{1-\kappa} &\lesssim_\e \Theta^+_\e = C^+ \e^{\frac{\vt \al(1+\beta^-)}{\vt\al +\beta^*(\vt\al +\beta^\circ)-1}},
\end{align*}
and analogously for $r^-$ being replaced by $\Theta^-_\e$ eventually leading to the desired result
\begin{align}\label{eq: asymptotic prob}
\PP(\si^+_{\Theta^+_\e}< \si^-_{\Theta^-_\e}) 
\lesssim_\e \frac{\Theta^-_\e +\e^{1-\kappa}}{\Theta^+_\e + \Theta^-_\e +\e^{1-\kappa}} \ra  0, 
\qquad \mbox{ as }\e \ra 0.
\end{align}
\end{proof}

\begin{proof} of Proposition \ref{prop: exit location}: 

Without loss of genrality, due to $\beta^+>\beta^-$ we may collect Lemma \ref{lem: reduction process to noise exit asymmetric case}, 
Lemma \ref{lem: noise exit strictly stable}, 
equation (\ref{eq: tau kappa}) and Proposition \ref{Prop noise 2}, 
which altogheter guarantee the existence of a constant $g>0$ such that for $\hat t_\e = t_\e |\ln(\e)|$ such that 
\begin{align*}
\PP(X^\e_\chi \gqq \Theta^+_\e) 
&\lqq \PP(X^\e_\chi \gqq \Theta^+_\e, \chi \lqq \hat t_\e) + \PP(\chi > \hat t_\e) \\
&\lqq \PP(\sup_{t\in [0, \hat t_\e]} (V_t^\e)_+^{\beta^+} + \e L_\chi\gqq \Theta_\e^+) + \PP(\chi > \hat t_\e) \\
&\lqq \PP(\sup_{t\in [0, \hat t_\e]} (V_t^\e)_+^{\beta^+}\gqq \Theta_\e^+\e^g) + \PP(\e L_\chi \gqq \Theta^+_\e(1-\e^g))+ \PP(\chi > \hat t_\e)\\
&\lqq \PP(\sup_{t\in [0, \hat t_\e]} (V_t^\e)_+^{\beta^+}\gqq \Theta_\e^+\e^g) + \PP(\si^+_{\Theta^+_\e}< \si^-_{\Theta^-_\e}) 
+ \PP(\tau_\kappa \lqq \hat t_\e) + \PP(\chi > \hat t_\e) \ra 0,
\end{align*}
as $\e \ra 0+$. 
Eventually the relation 
$\liminf_{\e \ra 0+} \PP(X^\e_\chi \lqq - \Theta^-_\e) \gqq 1- \limsup_{\e \ra 0+} \PP(X^\e_\chi \gqq \Theta^+_\e)$ 
finishes the proof.

\end{proof}

\newpage
\section{The linearized dynamics enhances the regime close to the origin}\label{sec: linearized}

We already know by Section \ref{sec: large jumps}, Corollary \ref{cor: short time scale convergence} 
that for initial values $x\gqq -3\delta_\e$ the law 
$\PP \circ X^{\e, x} \ra \delta_{x^+}$ uniformly on larger and larger time scales. 
Section \ref{sec: exit distributions} establishes 
that for any 
family of functions $(\Theta^+_{\e}, \Theta^-_{\e}, t_{\e})_{\e\in (0,1]}$ defined by Definition \ref{def: Theta-t} 
and initial values $x\in (-\Theta^-_\e, \Theta^+_\e)$ the solution 
$X^{x, \e}$ exits the interval $(-\Theta^-_\e, \Theta^+_\e)$ in time $\ti t_\e$ almost surely as long as 
$\lim_{\e\ra 0} \ti t_\e / t_\e \ra 0$.
In order to fill the gap between the scales of initial values 
\[
\Theta^\pm_\e = \e^\frac{\al(1+\beta^\pm)}{\al+\beta^\circ -1 + \beta^*(\al+\beta^\circ -1)} \lesssim_\e 3\e^{1- \rho^\pm(1+\al)} = 3\delta_\e^\pm,
\]
we consider the linearized dynamics. 

The main result of this section tells us that with a probability tending to $1$, 
the solution exits on the outer boundary of $[-3\delta_\e^-, -\Theta^-_\e] \cup [\Theta^+_\e, 3\delta_\e^+]$. 
We treat each subinterval individually with out loss of generality $[\Theta^+_\e, 4\delta_\e^+]$. 

\begin{lemma}
Let $[t_0 ,t_1) \subset \RR$ und  $v,v_0, \phi : [t_0, t_0) \ra \RR$, 
where $v$  und  $v_0$  measurable and locally bounded functions and $\phi \in L^1([t_0, t_1), \RR)$ with $\phi\gqq 0$. 
Then for almost all $t \in [t_0, t_1)$
\begin{align*}
v(t) \gqq v_0(t) + \int_{t_0}^t \phi(s) v(s) ds 
\end{align*}
implies for almost all $t\in [t_0, t_1)$ 
\begin{align*}
v(t) \gqq v_0(t) + \int_{t_0}^t v_0(s) \phi(s) \exp(\int_s^t \phi(r) dr) ds.  
\end{align*}
\end{lemma}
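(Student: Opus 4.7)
The plan is to run the standard Gronwall argument backwards, exploiting the sign of the inequality rather than reversing it. Set
\[
U(t) := \int_{t_0}^{t} \phi(s) v(s)\, ds,
\]
so that $U(t_0)=0$ and, since $\phi \in L^1$ and $v$ is locally bounded and measurable, $U$ is absolutely continuous with $U'(t) = \phi(t) v(t)$ for almost every $t \in [t_0,t_1)$. Substituting the hypothesis $v(t) \gqq v_0(t) + U(t)$ into this identity and using $\phi \gqq 0$, I get the scalar differential inequality
\[
U'(t) - \phi(t)\, U(t) \gqq \phi(t)\, v_0(t) \qquad \text{for a.e. } t \in [t_0, t_1).
\]

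Next I would multiply by the integrating factor $\Phi(t) := \exp\bigl(-\int_{t_0}^{t} \phi(r)\, dr\bigr)$, which is positive and absolutely continuous, obtaining
\[
\frac{d}{dt}\bigl(\Phi(t)\, U(t)\bigr) \gqq \phi(t)\, v_0(t)\, \Phi(t) \qquad \text{for a.e. } t.
\]
Since both sides are locally integrable and the left-hand side is the derivative of an absolutely continuous function, I can integrate from $t_0$ to $t$ (using $U(t_0)=0$) to get
\[
\Phi(t)\, U(t) \gqq \int_{t_0}^{t} \phi(s)\, v_0(s)\, \Phi(s)\, ds.
\]
Dividing through by $\Phi(t) > 0$ converts $\Phi(s)/\Phi(t)$ into $\exp\bigl(\int_{s}^{t}\phi(r)\,dr\bigr)$, yielding
\[
U(t) \gqq \int_{t_0}^{t} v_0(s)\, \phi(s)\, \exp\Bigl(\int_{s}^{t} \phi(r)\, dr\Bigr)\, ds,
\]
and adding $v_0(t)$ to both sides together with the hypothesis $v(t) \gqq v_0(t) + U(t)$ gives the claim.

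The only delicate point is justifying the manipulations under the stated regularity (measurable and locally bounded $v,v_0$, $\phi \in L^1_{\mathrm{loc}}$, inequality only almost everywhere), but this is routine: $U$ and $\Phi$ are absolutely continuous, $\Phi U$ is too, and the fundamental theorem of calculus applies to a.e.\ differential inequalities as soon as both sides are locally integrable. No sign reversal or iteration is needed, so I do not anticipate any real obstacle beyond this bookkeeping.
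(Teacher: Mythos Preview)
Your argument is correct and is the standard integrating-factor proof of this Gronwall-from-below inequality. Note that the paper does not actually supply a proof of this lemma; it is stated as an auxiliary tool and then invoked in the proof of Proposition~\ref{prop: linearized dynamics} under the phrase ``a classical non-autonomous Gronwall inequality from below.'' Your write-up fills that gap cleanly, and the regularity bookkeeping you flag (absolute continuity of $U$ and $\Phi U$, local integrability of both sides) is indeed routine under the stated hypotheses.
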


\noindent For $\e >0$ and $x \in [\Psi_{0}, \Psi_1]$ denote
\[
\upsilon^{x}(\e) := \inf\{t>0~|~ X^{\e, x}_t \gqq \Psi_1\}.
\]

\begin{proposition}\label{prop: linearized dynamics}
Without loss of generality we consider $\beta = \beta^\circ = \beta^+ \lqq \beta^-$ and 
the family of functions $(\Theta^+_{\e}, \Theta^-_{\e}, t_{\e})_{\e\in (0,1]}$ defined by Definition \ref{def: Theta-t}. 
Then there is an increasing, continuous function $s^n_\cdot: (0,1) \ra (0,1)$ with $s^n_\e \ra 0$ for any fixed $n \in \NN$ 
as $\e\ra 0$, such that 
\[
\lim_{\e\ra 0}\sup_{x\gqq \Psi_{0, \e}} \PP(\upsilon^{x}(\e) > s_{\e}) = 0.  
\]
\end{proposition}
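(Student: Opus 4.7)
The plan is to combine the explicit deterministic flow of $b(x) = B^+ x^\beta$ on the positive half-line with the small/large jump decomposition of Section \ref{sec: large jumps}, and to use the Gronwall-type lemma stated just above to amplify the initial positivity through the expanding linearization of $b$ near the origin.

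First I would fix the target climbing time. Starting from $\Theta^+_\e$, the explicit deterministic solution (\ref{eq: explicit solution}) reaches the level $\Psi_{1,\e} = 3\delta^+_\e$ at time
\begin{equation*}
s^{\det}_\e := \frac{(3\delta^+_\e)^{1-\beta} - (\Theta^+_\e)^{1-\beta}}{B^+(1-\beta)},
\end{equation*}
which is polynomial in $\e$ and tends to $0$. I would set $s^n_\e := n \, s^{\det}_\e$ to provide the multiplicative safety factor in the statement. Using the decomposition $L = \xi^\e + \eta^\e$ with the cut-off $\rho$ of (\ref{eq: choice of rho}), the first large-jump waiting time satisfies $\PP(T_1 > s^n_\e) = e^{-\lambda_\e s^n_\e} \ra 1$ (since $\lambda_\e \approx \e^{\al\rho}$ and $s^n_\e$ is vanishing polynomially), and (\ref{eq: small noise finite interval}) delivers
\begin{equation*}
\PP\Bigl( \sup_{t \in [0, s^n_\e]} |\e \xi^\e_t| > c_\e \Bigr) \lqq \exp\Bigl( -\frac{c_\e}{\e^{1-\rho} s^n_\e} \Bigr),
\end{equation*}
which is negligible once $c_\e$ sits above $\e^{1-\rho} s^n_\e$ and below $\Theta^+_\e$; such a window exists under $\al > 1 - \beta$ by a direct comparison of exponents analogous to (\ref{eq: drift is smaller than delta}).

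On the good event I would run two complementary comparisons. For $x \gqq 2c_\e$, the monotonicity argument leading to (\ref{ineq: y-u<0a}) yields $Y^{\e, x}_t \gqq u(t; x - c_\e) - c_\e$, so the definition of $s^{\det}_\e$ immediately gives $X^{\e, x}_{s^n_\e} \gqq 3\delta^+_\e$. For the delicate range $\Theta^+_\e \lqq x < 2c_\e$, where the shift $x - c_\e$ may leave the region of positivity of $b$, I would feed the inequality
\begin{equation*}
X^{\e, x}_t \gqq x - c_\e + \int_0^t b(X^{\e, x}_s)\, ds \gqq x - c_\e + \phi_\e \int_0^t X^{\e, x}_s\, ds,
\end{equation*}
valid as long as $X^{\e, x}$ stays in $[\Theta^+_\e, 3\delta^+_\e]$ with $\phi_\e := B^+(3\delta^+_\e)^{\beta-1}$ (since $y \mto y^{\beta-1}$ is decreasing on $(0, \infty)$), into the Gronwall-type lemma recalled just above the proposition. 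Its integral form produces an exponential amplification at rate $\phi_\e$, which is polynomially large in $\e$; because $s^n_\e \phi_\e$ beats the logarithmic quantity $\log(3\delta^+_\e/\Theta^+_\e)$, the amplified lower bound crosses the level $3\delta^+_\e$ well before time $s^n_\e$, and this exit probability can be upgraded to a uniform bound in $x \gqq \Psi_{0,\e}$.

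The main obstacle is the orchestration of the many polynomial scales: $s^n_\e \ra 0$, $\e^{1-\rho} s^n_\e \ll c_\e \ll \Theta^+_\e$, and $\phi_\e s^n_\e$ dominating $\log(3\delta^+_\e/\Theta^+_\e)$ must hold simultaneously as bare powers of $\e$ (dressed with $|\log \e|$-factors). This is the same style of compatibility check performed in Section 3.2.3.3; the hypothesis $\al > 1 - (\beta^+ \wedge \beta^-)$, together with the choice of $\vt^*$ in Definition \ref{def: Theta-t}, is precisely what makes all these orderings simultaneously realisable.
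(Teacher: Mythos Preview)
Your Case~2 amplification does not close quantitatively. With $s^n_\e = n\,s^{\det}_\e \sim n(3\delta^+_\e)^{1-\beta}/(B^+(1-\beta))$ and $\phi_\e = B^+(3\delta^+_\e)^{\beta-1}$, the product $\phi_\e s^n_\e \approx n/(1-\beta)$ is \emph{bounded} as $\e\to 0$, whereas $\log(3\delta^+_\e/\Theta^+_\e)$ is of order $|\log\e|\to\infty$. So the Gronwall factor $e^{\phi_\e s^n_\e}$ stays bounded and cannot bridge the polynomial scale gap from $\Theta^+_\e$ up to $3\delta^+_\e$. Moreover, if the window $\e^{1-\rho}s^n_\e \ll c_\e \ll \Theta^+_\e$ that you postulate really exists, then $2c_\e<\Theta^+_\e\lqq x$ for all admissible $x$, Case~2 is vacuous, and Case~1 alone would suffice; but you never verify this window, and it is not clear that the fixed cut-off $\rho$ of Section~\ref{sec: large jumps} is compatible with the exponent of $\Theta^+_\e$ coming from Section~\ref{sec: close to the origin}.

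The paper avoids both issues by two deliberate choices that differ from yours. First, it does \emph{not} reuse $\rho$: a fresh large-jump threshold $\e^{-\pi_1}$ is introduced, and the small noise is only required to stay below $\tfrac{B}{2}\Psi_{0,\e}^{\beta}\,s_\e$ (a drift-sized quantity), which is a much weaker demand than $c_\e\ll\Theta^+_\e$. Second, and decisively, the paper does \emph{not} take $s_\e$ to be the deterministic travel time; it sets $s_\e=\tfrac{2}{B}\Psi_{1,\e}^{(1-\beta)/2}$, which still tends to zero but is polynomially longer than $s^{\det}_\e\sim\Psi_{1,\e}^{1-\beta}$. On this longer interval the linearized rate satisfies $\phi_\e s_\e\sim\Psi_{1,\e}^{-(1-\beta)/2}\to\infty$, so the exponential amplification genuinely wins. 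The paper also uses the \emph{affine} secant lower bound $b(y)\gqq B\bigl[\Psi_{0,\e}^{\beta}+\text{slope}\cdot(y-\Psi_{0,\e})\bigr]$ rather than your homogeneous bound $b(y)\gqq\phi_\e y$; the intercept $\Psi_{0,\e}^{\beta}$ acts as a forcing term in the Gronwall inequality and drives growth even when the initial displacement $x-\Psi_{0,\e}$ is zero. These two adjustments---a longer $s_\e$ and the affine forcing---are precisely what make the amplification reach $\Psi_{1,\e}$ while keeping the noise-control and no-large-jump conditions (\ref{eq: aux2})--(\ref{eq: aux3}) simultaneously satisfiable.
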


\begin{proof}
We start with setting $\Psi_{0, \e} = \Theta^\circ_\e$ and 
introduce the time $s_{\e}$ with $s_{\e} \ra 0$, as $\e\ra 0$, and $\Psi_{1, \e} \gqq \Psi_{0,\e}$, with $\Psi_{1, \e} \ra 0$, 
as $\e \ra 0$, which both will be determined below. 
The proof consists in the establishment of an appropriate choice of a parameter $\pi_1 \in \RR$. 
For $\pi_1 \in \RR$ given we denote the time 
\[
\tau_{\pi_1} = \tau_{\pi_1}(\e) := \inf\{t>0~|~ |\Delta L_t|>\e^{-\pi_{1}}\}.
\]
We write shorthand $\beta, B$ for $\beta^+$, $B^+$.
Then analogously to (\ref{eq: L conditioned on small jumps}) we have 
\begin{align*}
\e L_t = \e \xi^{\pi_1}_t  \qquad \mbox{ for all }t \in [0, s_{\e}]\mbox{ and }\PP(\cdot~|~ \tau_{\pi_1} > s_{\e})-\mbox{ a.s.}
\end{align*}
Hence for all 
$\om \in \{\tau_{\pi_1} > s_{\e}\} \cap \{\sup_{t\in [0, s_{\e}]} |\e \xi^{\pi_1}_t| \lqq \frac{B}{2}\Psi_{0,\e}^{\beta} ~s_{\e}\}$ 
we have $\PP(\cdot~|~ \tau_{\pi_1} > s_{\e})-\mbox{ a.s.}$ for $t\in [0, s_{\e}]$ that 
\begin{align*}
X^{\e, x}_{t} &= x + \int_0^{t} b(X^{\e, x}_s) ds + \e L_{t} 
= x + \int_0^{t} b(X^{\e, x}_s) ds + \e \xi^\pi_t  \\
&\gqq \Psi_{0,\e} + B\int_0^{t} \Big[ \Psi_{0,\e}^\beta +  (X^{\e, x}_s - \Psi_{0, \e}) 
\frac{\Psi_{1,\e}^\beta-\Psi_{0, \e}^\beta}{\Psi_{1, \e}-\Psi_{0,\e}} \Big] ds + \e \xi^{\pi}_t\\
&\gqq \Psi_{0,\e} + B\int_0^{t} \Big[ \frac{\Psi_{0,\e}^\beta}{2} +  (X^{\e, x}_s - \Psi_{0, \e}) 
\frac{\Psi_{1,\e}^\beta-\Psi_{0, \e}^\beta}{\Psi_{1, \e}-\Psi_{0,\e}} \Big] ds.
\end{align*}
Hence for $W_{1,t} := W^{\e, x}_{1,t} := X^{\e, x}_t- \Psi_{0,\e}$ and $t\gqq 0$ we have 
\begin{align*}
W_t &\gqq \frac{B}{2}\int_0^{t} \Big[\Psi_{0,\e}^\beta +  W_s
\frac{\Psi_{1,\e}^\beta-\Psi_{0,\e}^\beta}{\Psi_{1,\e}-\Psi_{0,\e}}\Big] ds \\
& \gqq \frac{B}{2}\Psi_{0,\e}^\beta t + B\int_0^{t}W_s \Big[\frac{\Psi_{1,\e}^\beta-\Psi_{0,\e}^\beta}{\Psi_{1,\e}-\Psi_{0,\e}}\Big] ds\\
& \gtrsim_\e \frac{B}{2}\Psi_{0,\e}^\beta t + \frac{B}{2} \frac{1}{\Psi_{1,\e}^{1-\beta}}\int_0^{t} W_s ds.
\end{align*}
A classical non-autonomous Gronwall inequality from below yields  
\begin{align*}
W_{t} 
&\gqq \frac{B}{2}\Psi_{0,\e}^\beta t
 + \frac{B}{2} \Psi_{0,\e}^\beta \exp\Big(\frac{B}{2} \frac{t}{\Psi_{1,\e}^{1-\beta}}\Big) 
\int_0^{t} s \exp\Big(-\frac{B}{2} \frac{s}{\Psi_{1,\e}^{1-\beta}}\Big)ds
\end{align*}
and by direct calculation 
\begin{align*}
W_{t} 
&\gqq 
\frac{B}{2}\Psi_{0,\e}^\beta t 
+ \frac{2}{B}\Psi_{0,\e}^\beta \Psi_{1,\e}^{2(1-\beta)}  \exp\Big(\frac{B}{2} \frac{t}{\Psi_{1,\e}^{1-\beta}}\Big) 
 \Big(1 -(1+\frac{B}{2} \frac{t}{\Psi_{1,\e}^{1-\beta}}) \exp\big(-\frac{B}{2} \frac{t}{\Psi_{1,\e}^{1-\beta}} \big)\Big).
\end{align*} 
We set $s_{\e} = \frac{2}{B} \Psi_{1,_\e}^{\frac{1-\beta}{2}}$. 
This choice yields for any $C>0$ a constant $\e_0\in (0,1)$ such that $0< \e\lqq \e_0$ 
\[
(1+\frac{B}{2} \frac{s_{\e}}{\Psi_{1,\e}^{1-\beta}}) \exp\big(-\frac{B}{2} \frac{s_{\e}}{\Psi_{1,\e}^{1-\beta}} \big)
\lqq C.
\]
Therefore for $\e \in (0, \e_0]$ 
\begin{align*}
X^{\e, x}_{s_\e} &\gqq 
\Psi_{0,\e}
+\frac{B}{2}\Psi_{0, \e}^\beta s_{\e} \\
&\qquad + \Psi_{1,\e}^{2(1-\beta)} \Psi_{0,\e}^\beta \
\Big[\frac{2}{B}\Big(1 -C\Big) 
\Big]\exp\Big(\frac{B}{2} \frac{s_{\e}}{\Psi_{1,\e}^{1-\beta}}\Big)\\
&\gtrsim_\e \frac{1-C}{B} \Psi_{1,\e}^{2(1-\beta)} \Psi_{0,\e}^\beta \exp\Big(\Psi_{1,\e}^{-\frac{1}{2}(1-\beta)}\Big)
\gtrsim_\e \Psi_{1, \e}.
\end{align*}
We hence obtain 
\begin{align}
\PP(\upsilon^{1, x}_\e > s_{\e}) & \lqq 
\PP(\sup_{t\in [0, s_{\e}]} |\e \xi^{\pi_1}(t)| > \frac{B}{2}\Psi_{0,\e}^\beta s_{\e}) + 1-\PP(\tau_{\pi_1}> s_{\e}) \nonumber\\
& \lqq \exp( - \frac{B}{2}\frac{\Psi_{0, \e}^\beta}{\e^{1-\pi_1}}) + 1-\exp(-\frac{2}{B} \e^{\al \pi_1}  \Psi_{1,_\e}^{\frac{1}{2}(1-\beta)}).
\label{eq: main estimate}
\end{align}
In order to conclude we determine $\pi_1$ and $\Psi_{1, \e}$ such that the last two terms in (\ref{eq: main estimate}) tend to $0$. 
For further use we note that for $\al>1$ we have 
\begin{align*}
\beta^\circ \ln_{\e}(\Psi_{0, \e}) -1 = \frac{\vt \al \beta^\circ (1+\beta^*)- (\vt \al + \beta^* -1 + \beta^*(\al + \beta^\circ -1))}{\vt \al + \beta^* -1 + \beta^*(\al + \beta^\circ -1)}<0, 
\end{align*}
since the denominator is positive by $\vt \al > 1- \beta^\circ$ and 
\begin{align*}
&\vt \al \beta^\circ +\vt \al \beta^\circ \beta^*
- \vt \al - \beta^* +1 - \vt \al\beta^* - \beta^\circ \beta^* +\beta^* \\
&= \vt \al \beta^\circ +\vt \al \beta^\circ \beta^*
- \vt \al  +1 - \vt \al\beta^* - \beta^\circ \beta^* \\
&= \al \vt (\beta^\circ - \beta^*) + \beta^\circ \beta^* (\vt \al -1) - (\vt \al -1)\\
&= \al \vt (\beta^\circ - \beta^*) - (1-\beta^\circ \beta^*) (\vt \al -1) < 0. 
\end{align*}
The right-hand side of (\ref{eq: main estimate}) yields 
\begin{align}
&-\pi_1 > \beta \ln_\e(\Psi_{0, \e})-1\label{eq: aux2}\\
&-\pi_1 < \frac{(1-\beta)}{2\al} \ln_\e(\Psi_{1,_\e}).\label{eq: aux3}
\end{align}
First note that (\ref{eq: aux2}) is satisfied for any $\pi_1<0$ 
since we impose that $\ln_\e(\Psi_{0,\e})>0$, while 
inequality (\ref{eq: aux3}) represents a restriction on $\ln_\e(\Psi_{1, \e})$, 
which can be circumvented for $-\pi_1$ small enough. 

In this case we only have to take into account (\ref{eq: aux2}) and (\ref{eq: aux3}). 
We can hence may choose the desired quantities
\begin{align}
&\Psi_{1,_\e} := 3\delta_\e^\frac{1}{2} \qquad \Big(\vzv \quad \ln_{\e}( \Psi_{1, \e} = \frac{1}{2} \ln_{\e}(3 \delta_\e)~\Big)\nonumber\\
&(-\pi_1) := \frac{(1-\beta)\gamma}{2} \ln_\e(\Psi_{1,_\e}).\label{def: pi1}
\end{align}
\end{proof}

\newpage

\section{The solution selection problem: Proof of Theorem \ref{main theorem}}\label{sec: Hauptsatzbeweis}

In this section we prove a slightly stronger statement than Theorem \ref{main theorem}. 
In the sequel we collect all tailor-made partial result of this article. 
Let $\al, \beta^+, \beta^-$ with $\al > 1- (\beta^+ \wedge \beta^-)$ be given. 

In Corollary \ref{cor: short time scale convergence} the time scale $\ti m_\e$ 
is bounded by $\delta_{\e}^{-\frac{\beta^2}{2}}$. 
By definition (\ref{def: delta eps}) there is $\theta^*>0$ such that 
$\e^{-\theta^*} / \delta_{\e}^{-\frac{\beta^2}{2}} \ra 0$ as $\e \ra 0$. 
Recall $(\Theta^+_{\e}, \Theta^-_{\e}, t_{\e})_{\e\in (0,1]}$ 
defined by Definition~\ref{def: Theta-t} and Lemma~\ref{lem: Hilfslemma 1} 
and the respective hitting times as defined by (\ref{eq: null austritt})
\begin{align*}
\tau_{\Theta^+_\e, \Theta^-_\e}(\e, x) &= \inf\{ t>0~|~ X^{\e, x}_t <-\Theta^-_\e \mbox{ or } X^{\e, x}_t\gqq \Theta^+_\e\}\\
\si_{\delta_\e^+, \delta_\e^-}(\e, x) &= \inf\{ t>0~|~ X^{x, \e}_t< -3\delta_\e^-\mbox{ or }X^{x, \e}_t> 3\delta_\e^+ \},
\end{align*}
where we dropped the dependence on $\vt$. 
Fix a time scale $\hat t_\e = t_\e |\ln(\e)|$ chosen according to Proposition \ref{Prop noise 2} with respect to $t_\e$ 
and $\hat s_\e = s_\e |\ln(\e)|$ with respect to 
$s_\e$ determined by Proposition \ref{prop: linearized dynamics}.
Furthermore we recall the exponents $\kappa<0$ defined in (\ref{def: kappa}) and $\pi<0$ defined in (\ref{def: pi}) 
and $\tau_\kappa$ and $\tau_\pi$ for  
\[
\tau_c = \inf\{t>0~|~|\Delta L_t|>\e^{-c}\}, \qquad c\in \RR. 
\]
Since all other dependencies are clear we shall
write shorthand $\chi = \tau_{\Theta^+_\e, \Theta^-_\e}(\e, 0)$ 
and $\si^x = \si_{\delta_\e^+, \delta_\e^-}(\e, x)$ and $X^x = X^{\e, x}$. 
The solution $X^{\e, x}$ will be denoted by $X^{x}$. 
We use the strong Markov property of $X^{x}$ to control the exit 
from the neighborhood $(-\Theta^-_\e, \Theta^+_\e)$ of the origin. 
For all $\e$ sufficiently small such that $\hat t_\e \lqq \e^{-\theta^*}$ and any $f$ a bounded, uniformly continuous 
function we have 
\begin{align*}
&\EE[f((X^{0}_t)_{t\in [0, \e^{-\theta^*}]})] \\
& = \EE[\EE[f((X^{0}_t)_{t\in [0, \e^{-\theta^*}]})\ind\{\chi \lqq \hat t_\e\} \ind\{\tau_\kappa \gqq \hat t_\e\}
\big(\ind\{X^{0}_{\chi} \gqq \Theta^+_\e\} + \ind\{X^{0}_{\chi} \lqq -\Theta^-_\e\}\big)
~|~\fF_{\chi}]] \\
&\qquad + \PP(\chi > \hat t_\e) + \PP(\tau_\kappa < t_\e)\\
& \lqq \PP(X^{0}_{\chi} \gqq \Theta^+_\e) \sup_{\Theta^+_\e\lqq x \lqq \Theta^+_\e+\e^{1- \kappa}} 
\EE[f((X^{x})_{t\in [0, \e^{-\theta^*}-\chi]})\ind\{\chi \lqq \hat t_\e\}]\\
&\qquad + \PP(X^{0}_{\chi}\lqq -\Theta^-_\e) \sup_{- \Theta^- - \e^{1-\kappa} \lqq x\lqq -\Theta^-_\e} 
\EE[f((X^{x})_{t\in [0, \e^{-\theta^*}-\chi]})\ind\{\chi \lqq \hat t_\e\}] \\
&\qquad + \PP(\chi > \hat t_\e) + \PP(\tau_\kappa < t_\e).
\end{align*}
Proposition \ref{prop: exit location} and Definition \ref{def: Theta-t} 
establish that the probability $\PP(X^{0}_{\chi} \gqq \Theta^+_\e)$ 
tends to $p^+$ as $\e \ra 0$ as given the statement of Theorem \ref{main theorem}. 
The penultimate term tends to $0$ due to Proposition~\ref{Prop noise 2} and the last one due to relation (\ref{eq: tau kappa}). 
In the following we first consider the positive branch. 
Since $\kappa<0$ and we have $\e^{1-\kappa} 3 \delta_\e^+ \ra 0$ as $\e \ra 0$ in addition $\hat s_\e \lqq \e^{-\theta^*}$. 
Hence for $\e$ sufficiently small 
\begin{align*}
&\sup_{\Theta^+_\e\lqq x \lqq \Theta^+_\e+\e^{1- \kappa}} 
\EE[f((X^{x})_{t\in [0, \e^{-\theta^*}-\chi]})\ind\{\chi \lqq \hat t_\e\}]\\
&\lqq \sup_{\Theta^+_\e \lqq x < 3 \delta^+_\e} \EE[f((X^{x})_{t\in [0, \e^{-\theta^*}]})] \\
& \lqq \sup_{\Theta^+_\e \lqq x < 3 \delta^+_\e} \EE[f((X^{\e, x})_{t\in [0, \e^{-\theta^*}]}) 
\ind\{\si^{x} \lqq \hat s_\e\}\ind\{X^{x}_{\si^{x}} \gqq 3\delta_\e\}]\\
&\quad + \sup_{\Theta^+_\e \lqq x < 3 \delta^+_\e} \PP(X^{x}_{\si^{x}} < \Theta^+_\e)
+ \sup_{\Theta^+_\e \lqq x < 3 \delta^+_\e} \PP(\si^{x} > \hat s_\e),
\end{align*}
where the next-to-last and the last term tend to $0$ by Proposition \ref{prop: linearized dynamics} as $\e \ra 0$. 
We continue with the strong Markov property 
\begin{align*}
&\sup_{\Theta^+_\e \lqq x < 3 \delta^+_\e} \EE[f((X^{x})_{t\in [0, \e^{-\theta^*}]}) 
\ind\{\si^{x} \lqq \hat s_\e \}\ind\{\tau_\pi > \hat s_\e\} \ind\{X^{x}_{\si^{\e}} \gqq 3\delta^+_\e\}]
+ \PP(\tau_\pi \lqq \hat s_\e)\\
&\lqq \sup_{3 \delta^+_\e\lqq x \lqq 3 \delta^+_\e+ \e^{1-\pi}} 
\EE[f((X^{x})_{t\in [0, \e^{-\theta^*}-\si^{x}])} \ind\{\si^x \lqq \hat s_\e\}]\\
&\lqq \sup_{3 \delta^+_\e\lqq x \lqq 3 \delta^+_\e+ \e^{1-\pi}} \EE[f((X^{x})_{t\in [0, \e^{-\theta^*}]})].
\end{align*}
Again since $\pi<0$ we have that $\e^{1-\pi} \delta_\e^+ \ra 0$ as $\e \ra 0$. 
First let $f$ be uniformly continuous with respect to $\DD([0,\infty); \RR)$ equipped with the uniform norm. 
We denote by $\Xi$ the module of uniform continuity of $f$. 
For $\Delta_\e = 3 \delta^+_\e+ \e^{1-\pi}$ 
in the statement of Corollary \ref{cor: short time scale convergence} we have 
\begin{align*}
&\sup_{3 \delta^+_\e\lqq x \lqq 3 \delta^+_\e+ \e^{1-\pi}} \EE[f((X^{x})_{t\in [0, \e^{-\theta^*}]})]\\
&\lqq \sup_{3 \delta^+_\e\lqq x \lqq 3 \delta^+_\e+ \e^{1-\pi}} 
\EE[f((X^{x})_{t\in [0, \e^{-\theta^*}]})\{\sup_{t\in [0, \e^{-\theta^*}]} 
|X^{x}_t - x^+_t|\lqq (\delta^+_\e)^{\frac{\beta^+(1-\beta^+)}{2}}\}] \\
&\qquad +\|f\|_\infty \sup_{3 \delta^+_\e\lqq x \lqq 3 \delta^+_\e+ \e^{1-\pi}} 
\PP(\sup_{t\in [0, \e^{-\theta^*}]} |X^{x}_t - x^+_t|>(\delta^+_\e)^{\frac{\beta^+(1-\beta^+)}{2}})\\
&\lqq f((x^+_t)_{t\in [0, \e^{-\theta^*}]}) + \Xi(\delta_\e^{\frac{\beta^+(1-\beta^+)}{2}})
+ \|f\|_\infty \sup_{3 \delta^+_\e\lqq x \lqq 3 \delta^+_\e+ \e^{1-\pi}} 
\PP(\sup_{t\in [0, \e^{-\theta^*}]} |X^{x}_t - x^+_t|>(\delta^+_\e)^{\frac{\beta^+(1-\beta^+)}{2}}).
\end{align*}
Corollary \ref{cor: short time scale convergence} yields that the last term converges to $0$. 
The negative branch is treated analogously. 
For the case of general case of $f$ not uniformly continuous, we define the cutoff function 
$f_m(x) := f(x) \ind\{-m \lqq x\lqq m\}$, 
which is uniformly continuous 
and finally send $m$ to infinity, which is justified by the Beppo-Levi theorem. 

We prove the lower bound. Let $f$ be uniformly continuous. 
\begin{align*}
&\EE[f((X^{0}_t)_{t\in [0, \e^{-\theta^*}]})] \\
&\gqq \EE[f((X^{0}_t)_{t\in [0, \e^{-\theta^*}]})(\ind\{\chi \lqq \hat t_\e\} 
+ \ind\{\chi > \hat t_\e\})] \\
& = \EE[\EE[f((X^{\e, 0}_t)_{t\in [0, \e^{-\theta^*}]})\ind\{\chi \lqq \hat t_\e\} 
\big(\ind\{X^{0}_{\chi} \gqq \Theta^+_\e\} + \ind\{X^{0}_{\chi} \lqq -\Theta^-_\e\}\big)
~|~\fF_{\chi}]] \\
& \gqq \PP(X^{0}_{\chi} \gqq \Theta^+_\e) \sup_{ x\lqq \Theta^+_\e } 
\EE[f((X^{\e, x})_{t\in [0, \e^{-\theta^*}-\chi]})] \\
&\qquad + \PP(X^{0}_{\chi} \gqq -\Theta^-_\e) \sup_{ x\lqq -\Theta^-_\e} 
\EE[f((X^{x})_{t\in [0, \e^{-\theta^*}-\chi]})]
\end{align*}
Application of Proposition \ref{prop: exit location} and Definition \ref{def: Theta-t} 
ensure again that $\lim_{\e \ra 0} \PP(X^{0}_{\chi} \gqq \Theta^+_\e) = p^+$. 
We may continue with the positive branch
\begin{align*}
&\sup_{x\gqq \Theta^+_\e} \EE[f((X^{x})_{t\in [0, \e^{-\theta^*}-\hat t_\e]})] \\
&\gqq \sup_{x\gqq \Theta^+_\e} \EE[f((X^{\e, x})_{t\in [0, \e^{-\theta^*}-\hat t_\e]})
\ind\{X^{x}_{\si^{x}} \gqq 3\delta^+_\e\} \ind\{\si^{x} \lqq s_\e\}] \\
&\gqq \sup_{x\gqq 3\delta^+_\e} \EE[f((X^{x})_{t\in [0, \e^{-\theta^*}-\hat t_\e- s_\e]})
\ind\{\sup_{t\in [0, \e^{-\theta^*}]} |X^{x}_t - x^+_t|\lqq \delta_\e^{\frac{\beta^+(1-\beta^+)}{2}}\}] \\
&\gqq f((x^+_t)_{t\in [0, \e^{-\theta^*}-\hat t_\e- s_\e]}) - \Xi((\delta^+_\e)^{\frac{\beta^+(1-\beta^+)}{2}}). 
\end{align*}
The negative branch is treated analogously. For a function $f$ not uniformly continuous 
we use the same truncation argument as before. This proves the desired result.

\section*{Acknowledgements}
The second author would like to thank the Cooperation Group 
``Exploring climate variability: physical models, 
statistical inference and stochastic dynamics'' (February 18 --- March 28, 2013) 
organized by Peter Imkeller, Ilya Pavlyukevich and Holger Kantz, 
which was cordially hosted at ZiF Bielefeld, where this work was begun. 
He would further like to thank the Potsdam probability group, namely 
Mathias Rafler for helpful discussions and Sylvie Roelly for her constant support.

\end{document}